\newtheorem{thm}{Theorem}[section]
\newtheorem{cor}[thm]{Corollary}
\newtheorem{lem}[thm]{Lemma}
\theoremstyle{definition}
\def\P{\mathcal{P}}
\def\R{\mathbb{R}}
\def\Cx{\mathbb{C}}
\def\N{\mathbb{N}}
\def\F{\mathbb{F}}
\def\D{\mathbb{D}}
\def\B{\mathbb{B}}
\def\Dn{\mathfrak{D}}
\def\Cn{\mathfrak{C}}
\def\1{\mathbbm{1}}
\newcommand\dint{{\rm d}}
\newcommand{\im}{{\rm i}}
\DeclareMathOperator{\rank}{rank}
\DeclareMathOperator{\wal}{wal}
\DeclareMathOperator{\e}{e}
\begin{document}
\pagestyle{scrheadings}
\onehalfspacing

\title{$L_p$- and $S_{p,q}^rB$-discrepancy of (order $2$) digital nets}
\author{Lev Markhasin \\ \scriptsize Institut f\"ur Stochastik und Anwendungen, Universit\"at Stuttgart \\ \scriptsize Pfaffenwaldring 57, 70569 Stuttgart, Germany \\ \scriptsize email: lev.markhasin@mathematik.uni-stuttgart.de}
\maketitle

\begin{abstract}
Dick proved that all dyadic order $2$ digital nets satisfy optimal upper bounds on the $L_p$-discrepancy. We prove this for arbitrary prime base $b$ with an alternative technique using Haar bases. Furthermore, we prove that all digital nets satisfy optimal upper bounds on the discrepancy function in Besov spaces with dominating mixed smoothness for a certain parameter range and enlarge that range for order $2$ digitals nets. The discrepancy function in Triebel-Lizorkin and Sobolev spaces with dominating mixed smoothness is considered as well.
\end{abstract}

\noindent{\footnotesize {\it 2010 Mathematics Subject Classification.} Primary 11K06,11K38,42C10,46E35,65C05. \\
{\it Key words and phrases.} $L_p$-discrepancy, order $2$ digital nets, dominating mixed smoothness, quasi-Monte Carlo, Haar system, Walsh system.} \\[5mm]
\textit{Acknowledgement:} The author would like to thank the anonymous referee for his/her helpful comments.

\section{Introduction and results}
Let $N$ be a positive integer and let $\P$ be a point set in the unit cube $[0,1)^d$ with $N$ points. Then the discrepancy function $D_{\P}$ is defined as
\begin{align}
D_{\P}(x) = \frac{1}{N} \sum_{z \in \P} \chi_{[0,x)}(z) - x_1 \cdots x_d
\end{align}
for any $x = (x_1, \ldots, x_d) \in [0,1)^d$. By $\chi_{[0,x)}$ we mean the characteristic function of the interval $[0,x) = [0,x_1)\times\ldots\times[0,x_d)$, so the term $\sum_z \chi_{[0,x)}(z)$ is equal to the number of points of $\P$ in $[0,x)$. This means that $D_{\P}$ measures the deviation of the number of points of $\P$ in $[0,x)$ from the fair number of points $N |[0,x)| = N \, x_1 \cdots x_d$, which would be achieved by a (practically impossible) perfectly uniform distribution of the points of $\P$.

Usually one is interested in calculating the norm of the discrepancy function in some normed space of functions on $[0,1)^d$ to which the discrepancy function belongs. A well known result concerns $L_p([0,1)^d)$-spaces for $1<p<\infty$. There exists a constant $c_{p,d} > 0$ such that for every positive integer $N$ and all point sets $\P$ in $[0,1)^d$ with $N$ points, we have
\begin{align}
\left\|D_{\P}|L_p([0,1)^d)\right\|\geq c_{p,d}\,\frac{\left(\log N\right)^{(d-1)/2}}{N}.
\end{align}
This was proved by Roth \cite{R54} for $p=2$ and by Schmidt \cite{S77} for arbitrary $1<p<\infty$. The currently best known value for $c_{2,d}$ can be found in \cite{HM11}. Furthermore, there exists a constant $C_{p,d} > 0$ such that for every positive integer $N$, there exists a point set $\P$ in $[0,1)^d$ with $N$ points such that
\begin{align}\label{UpperLp}
\left\|D_{\P}|L_p([0,1)^d)\right\|\leq C_{p,d}\,\frac{\left(\log N\right)^{(d-1)/2}}{N}.
\end{align}
This was proved by Davenport \cite{D56} for $p=2,d=2$, by Roth \cite{R80} for $p=2$ and arbitrary $d$ and finally by Chen \cite{C80} in the general case. The currently best known  value for $C_{2,d}$ can be found in \cite{DP10} and \cite{FPPS10}.

There are results for the $L_1([0,1)^d)$- and the star ($L_{\infty}([0,1)^d)$-) discrepancy though there are still gaps between lower and upper bounds, see \cite{H81}, \cite{S72}, \cite{BLV08}. As general references for studies of the discrepancy function we refer to the monographs \cite{DP10}, \cite{NW10}, \cite{M99}, \cite{KN74} and surveys \cite{B11}, \cite{Hi14}, \cite{M13c}. The problem of point disribution is closely related to numerical integration, we refer to \cite[Chapter 2]{KN74} and \cite[Section 2.4]{DP10} for more on this subject.

Roth's and Chen's original proofs of \eqref{UpperLp} were probabilistic. Explicit constructions of point sets with good $L_p$-discrepancy in arbitrary dimension have not been known for a long time. Chen and Skriganov \cite{CS02} (see also \cite{CS08} and \cite{DP10}) gave explicit constructions satisfying the optimal bound on the $L_2$-discrepancy and Skriganov \cite{S06} later gave explicit constructions satisfying the optimal bound on the $L_p$-discrepancy. The constructions of Chen and Skriganov are digital nets over $\F_b$ with large Hamming weight. Dick and Pillichshammer \cite{DP14a} gave alternative constructions. Their constructions are order $3$ digital nets over $\F_2$. They also constructed digital sequences with optimal bounds on the $L_2$-discrepancy. Dick \cite{D14} gave further constructions which are order $2$ digital nets over $\F_2$. Here we generalize Dick's approach to order $2$ digital nets over $\F_b$ for every prime number $b$, which is stated in the following result.

\begin{thm}\label{main_thm_L2}
 There exists a constant $C_{d,b,v} > 0$ such that for every positive integer $n$ and every order $2$ digital $(v,n,d)$-net $\P_n^b$ over $\F_b$ we have
\[ \left\|D_{\P_n^b}|L_2([0,1)^d)\right\|\leq C_{d,b,v}\,\frac{n^{(d-1)/2}}{b^n}. \]
\end{thm}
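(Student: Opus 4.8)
The plan is to expand the discrepancy function in the $d$-dimensional $b$-adic Haar basis and to estimate its coefficients block by block. Write $N:=b^n$, and let $\{h_{j,m,\ell}\}$ be the $L_2$-normalised $b$-adic Haar system on $[0,1)$: here $j\ge -1$ is the level (with $j=-1$ the constant function $\1_{[0,1)}$), $m$ runs over the $b^j$ positions of the dyadic-type interval $I_{j,m}$ of length $b^{-j}$, and $\ell\in\{1,\dots,b-1\}$ is the type inside $I_{j,m}$. The tensor products $h_{\mathbf j,\mathbf m,\boldsymbol\ell}=\prod_{i=1}^{d}h_{j_i,m_i,\ell_i}$, indexed by $\mathbf j\in\{-1,0,1,\dots\}^d$ and admissible $\mathbf m,\boldsymbol\ell$, form an orthonormal basis of $L_2([0,1)^d)$, so by Parseval
\[
\left\|D_{\P_n^b}|L_2([0,1)^d)\right\|^2=\sum_{\mathbf j,\mathbf m,\boldsymbol\ell}\bigl|\mu_{\mathbf j,\mathbf m,\boldsymbol\ell}\bigr|^2,\qquad \mu_{\mathbf j,\mathbf m,\boldsymbol\ell}:=\bigl\langle D_{\P_n^b},h_{\mathbf j,\mathbf m,\boldsymbol\ell}\bigr\rangle .
\]
Everything then reduces to good bounds on the Haar coefficients $\mu_{\mathbf j,\mathbf m,\boldsymbol\ell}$, which I would organise according to the size of $\nu:=|\mathbf j|_1=j_1+\dots+j_d$ relative to $n$.

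I would use two representations of $\mu_{\mathbf j,\mathbf m,\boldsymbol\ell}$. First, with $H_{j,m,\ell}(y):=\int_y^1 h_{j,m,\ell}(x)\,\mathrm{d}x$, which is supported on $I_{j,m}$ and satisfies $\|H_{j,m,\ell}\|_\infty\lesssim b^{-j/2}$, the identity $\int_{[0,1)^d}\chi_{[0,x)}(z)\,\mathrm{d}z=x_1\cdots x_d$ gives the geometric formula
\[
\mu_{\mathbf j,\mathbf m,\boldsymbol\ell}=\frac1N\sum_{z\in\P_n^b}\prod_{i=1}^{d}H_{j_i,m_i,\ell_i}(z_i)-\prod_{i=1}^{d}\int_0^1 H_{j_i,m_i,\ell_i}(y)\,\mathrm{d}y .
\]
Second, each $h_{j,m,\ell}$ with $j\ge 0$ is a step function on the $b$-adic grid of level $j+1$ that is orthogonal to every $\wal_k$ with $k<b^j$, hence $h_{j,m,\ell}=\sum_{b^j\le k<b^{j+1}}c_k\wal_k$ with $|c_k|\le\|h_{j,m,\ell}\|_{L_1}\lesssim b^{-j/2}$; consequently $\{h_{\mathbf j,\mathbf m,\boldsymbol\ell}:\mathbf m,\boldsymbol\ell\}$ and $\{\wal_{\mathbf k}:\mathbf k\in R(\mathbf j)\}$ — where $R(\mathbf j)=\prod_i R(j_i)$, $R(j_i)=\{b^{j_i},\dots,b^{j_i+1}-1\}$ for $j_i\ge 0$ and $R(-1)=\{0\}$ — are orthonormal bases of the same finite-dimensional subspace, so that $\sum_{\mathbf m,\boldsymbol\ell}|\mu_{\mathbf j,\mathbf m,\boldsymbol\ell}|^2=\sum_{\mathbf k\in R(\mathbf j)}|\widehat{D_{\P_n^b}}(\mathbf k)|^2$. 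Using $\int_y^1\overline{\wal_k}=-\int_0^y\overline{\wal_k}$ for $k\ge 1$ together with the digital-net character sum $\frac1N\sum_{z\in\P_n^b}\wal_{\mathbf l}(z)=\1[\mathbf l\in\mathcal D]$ ($\mathcal D\subseteq\N_0^d$ the dual net), one obtains, for $\mathbf k$ with all $k_i\ge1$,
\[
\widehat{D_{\P_n^b}}(\mathbf k)=(-1)^d\sum_{\mathbf 0\ne\mathbf l\in\mathcal D}\prod_{i=1}^d\widehat{W_{k_i}}(l_i),\qquad W_k(y):=\int_0^y\overline{\wal_k(x)}\,\mathrm{d}x,
\]
with analogous but simpler expressions whenever some $k_i=0$.

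The core is then a lemma bounding $\sum_{\mathbf m,\boldsymbol\ell}|\mu_{\mathbf j,\mathbf m,\boldsymbol\ell}|^2$ in two regimes. For $\nu\ge n$ the Haar function lives on a $b$-adic box of volume $b^{-\nu}$; the $(v,n,d)$-net property bounds the number of points of $\P_n^b$ in any such box by a constant depending only on $b,d,v$, at most $N$ boxes are non-empty, and $\sum_{\mathbf m}\#(\P_n^b\cap I_{\mathbf j,\mathbf m})=N$, so the geometric formula and $\|H_{j,m,\ell}\|_\infty\lesssim b^{-j/2}$ yield $\sum_{\mathbf m,\boldsymbol\ell}|\mu_{\mathbf j,\mathbf m,\boldsymbol\ell}|^2\lesssim_{b,d,v}b^{-2n}b^{-(\nu-n)}$. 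The delicate regime is $\nu<n$: here the plain net property alone gives no saving at all — a generic (order $1$) digital net really has $\sum_{\mathbf m,\boldsymbol\ell}|\mu_{\mathbf j,\mathbf m,\boldsymbol\ell}|^2\asymp b^{-2n}$ for all $\nu\le n$, which is precisely why classical nets only reach $n^{d/2}/b^n$ in $L_2$ — and it is exactly here that the order $2$ hypothesis must be used. It forces every nonzero $\mathbf l\in\mathcal D$ that can contribute to some $\widehat{D_{\P_n^b}}(\mathbf k)$, $\mathbf k\in R(\mathbf j)$, to have order $2$ weight $\mu_2(\mathbf l)$ of order $n$, while the non-vanishing of $\widehat{W_{k_i}}(l_i)$ and the piecewise linearity of $W_{k_i}$ constrain and penalise the digits of the $l_i$; combining these produces a genuine geometric gain $\sum_{\mathbf m,\boldsymbol\ell}|\mu_{\mathbf j,\mathbf m,\boldsymbol\ell}|^2\lesssim_{b,d,v}b^{-2n}b^{-c(n-\nu)}$ for some $c=c(d)>0$. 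I expect this last estimate — carefully pairing the admissible dual-net elements $\mathbf l$ with the Walsh indices $\mathbf k$ of the block and summing the products $\prod_i\widehat{W_{k_i}}(l_i)$ — to be the main obstacle, as it is the one place where order $2$, and the parameter $v$, actually enter.

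It then remains to assemble the estimates. Summing the lemma over the $\binom{\nu+d-1}{d-1}\asymp\nu^{d-1}$ level vectors $\mathbf j\in\N_0^d$ with $|\mathbf j|_1=\nu$ and over $\nu\ge 0$ gives
\[
\left\|D_{\P_n^b}|L_2([0,1)^d)\right\|^2\lesssim_{b,d,v}b^{-2n}\sum_{\nu\ge 0}(\nu+1)^{d-1}\,b^{-c'|\nu-n|}\lesssim_{b,d,v}n^{d-1}\,b^{-2n},
\]
since the last sum is dominated by $\nu$ near $n$. Taking square roots yields $\|D_{\P_n^b}|L_2([0,1)^d)\|\le C_{d,b,v}\,n^{(d-1)/2}/b^n$, as claimed. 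Finally, the coefficients in which some $j_i=-1$ (a coordinate carrying only the constant function) are treated by the same Walsh-expansion argument: collapsing such a coordinate replaces $D_{\P_n^b}$ by a partial integral that, up to constants, is the discrepancy function of a lower-dimensional order $2$ digital net, so — using the extra integration accuracy of order $2$ nets in the collapsed directions — these terms are of strictly lower order and do not affect the bound.
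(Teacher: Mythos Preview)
Your overall strategy --- Parseval in the $b$-adic Haar basis, a geometric counting argument for large $|\mathbf j|_+$, and a Walsh/dual-net argument for small $|\mathbf j|_+$ --- is exactly the paper's. The paper does not pass through the block identity $\sum_{\mathbf m,\boldsymbol\ell}|\mu_{\mathbf j,\mathbf m,\boldsymbol\ell}|^2=\sum_{\mathbf k\in R(\mathbf j)}|\widehat D(\mathbf k)|^2$, however; it bounds each individual Haar coefficient by expanding $h_{j,m,l}$ in Walsh functions and combining this with $D_{\P_n^b}(x)=\sum_{t\in\Dn(\Cn)\setminus\{0\}}\hat\chi_{[0,x)}(t)$. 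The step you flag as ``the main obstacle'' is carried out explicitly there: for $|j|_+<n-\lceil v/2\rceil$ one obtains the pointwise bound $|\langle D_{\P_n^b},h_{j,m,l}\rangle|\preceq b^{-2n+v}(2n-v-2|j|_+)^{d-1}$, uniformly in $m,l$; the order~$2$ hypothesis enters only as the constraint $\varrho_2(t)>2n-v$ on the contributing dual vectors, and your ``some $c=c(d)>0$'' turns out to be simply $c=2$, up to a harmless polynomial factor. After this the assembly is your final display verbatim.

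One genuine error: your handling of coordinates with $j_i=-1$ is wrong. The partial integral $\int_0^1 D_{\P_n^b}(x)\,dx_i$ is \emph{not} a lower-dimensional discrepancy function --- already for $d=2$, integrating out $x_1$ yields $\tfrac1N\sum_z(1-z_1)\chi_{[0,x_2)}(z_2)-\tfrac12 x_2$, a weighted counting function with point-dependent weights $1-z_1$, not $D_Q$ for any point set $Q$. The paper avoids this issue entirely by phrasing all estimates in terms of $|j|_+=\sum_i\max(j_i,0)$: its Haar-coefficient lemma is stated and proved for all $j\in\N_{-1}^d$ at once (a coordinate with $j_i=-1$ simply forces $\alpha_i=0$ in the Walsh expansion of $h_{j,m,l}$, which in turn restricts the admissible $t_i$), so no separate dimensional reduction is needed. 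Rather than peel these coordinates off, you should absorb them into the main argument.
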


Our proof uses an alternative technique to Chen and Skriganov and Dick and Pillichhammer relying on Haar bases.

Furthermore, there are results for the discrepancy in other function spaces, like Hardy spaces, logarithmic and exponential Orlicz spaces, weighted $L_p$-spaces and BMO (see \cite{B11} for results and further literature). 

Here, we are interested in Besov ($S_{p,q}^rB([0,1)^d)$), Triebel-Lizorkin ($S_{p,q}^rF([0,1)^d)$) and Sobolev ($S_p^rH([0,1)^d)$) spaces with dominating mixed smoothness. Triebel \cite{T10} proved that for all $1\leq p,q\leq\infty$ with $q<\infty$ if $p=\infty$ and all $r\in\R$ satisfying $1/p-1<r<1/p$, there exists a constant $c_{p,q,r,d}>0$ such that for every integer $N \geq 2$ and all point sets $\P$ in $[0,1)^d$ with $N$ points, we have
\begin{align}
\left\|D_{\P}|S_{p,q}^rB([0,1)^d)\right\|\geq c_{p,q,r,d}\,N^{r-1}\,\left(\log N\right)^{(d-1)/q}.
\end{align}
With the additional condition that $q>1$ if $p=\infty$ there exists a constant $C_{p,q,r,d}>0$ such that for every positive integer $N$, there exists a point set $\P$ in $[0,1)^d$ with $N$ points such that
\[ \left\|D_{\P}|S_{p,q}^rB([0,1)^d)\right\|\leq C_{p,q,r,d}\,N^{r-1}\,\left(\log N\right)^{(d-1)(1/q+1-r)}. \]
Hinrichs \cite{Hi10} proved for $d=2$ that for all $1\leq p,q\leq\infty$ and all $0\leq r<1/p$ there exists a constant $C_{p,q,r}>0$ such that for every integer $N \geq 2$ there exists a point set $\P$ in $[0,1)^2$ with $N$ points such that
\[ \left\|D_{\P}| S_{p,q}^rB([0,1)^2)\right\|\leq C_{p,q,r}\,N^{r-1}\,\left(\log N\right)^{1/q}. \]
Markhasin \cite{M13b} proved that for all $1\leq p,q\leq\infty$ and all $0<r<1/p$ there exists a constant $C_{p,q,r,d}>0$ such that for every integer $N \geq 2$ there exists a point set $\P$ in $[0,1)^d$ with $N$ points such that
\begin{align}
\left\|D_{\P}| S_{p,q}^rB([0,1)^d)\right\|\leq C_{p,q,r,d}\,N^{r-1}\,\left(\log N\right)^{(d-1)/q}.
\end{align}
\cite{M13b} relied for the proof on explicit constructions. It was shown that the already mentioned constructions by Chen and Skriganov additionally have optimal bounds on the $S_{p,q}^rB$-discrepancy. The notion $S_{p,q}^rB$-discrepancy will be defined in the next section. In $d=2$ also (generalized) Hammersley point sets can be used (see \cite{Hi10}, \cite{M13a}). Our goal is to prove that there are also other point sets with optimal bounds on the $S_{p,q}^rB$-discrepancy. Furthermore we prove results for the spaces $S_{p,q}^r F([0,1)^d)$ and $S_p^r H([0,1)^d)$.

\begin{thm}\label{main_thm_SpqrB_ord1}
 Let $1\leq p<\infty$, $1\leq q\leq\infty$ and $0<r<1/p$. There exists a constant $C_{p,q,r,d,b,v} > 0$ such that for every integer $n$ and every order $1$ digital $(v,n,d)$-net $\P_n^b$ over $\F_b$ we have
\[ \left\|D_{\P_n^b}|S_{p,q}^r B([0,1)^d)\right\|\leq C_{p,q,r,d,b,v}\,b^{n(r-1)}\,n^{(d-1)/q}. \]
\end{thm}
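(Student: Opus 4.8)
The plan is to expand $D_{\P_n^b}$ into the $b$-adic Haar basis, estimate its Haar coefficients, and then apply the characterisation of the $S_{p,q}^rB$-norm by Haar coefficients, which is available precisely in the range $0<r<1/p$. Write $h_{\vec j,\vec m,\vec k}$ for the $L_\infty$-normalised Haar functions, supported on the $b$-adic box $I_{\vec j,\vec m}$ of volume $b^{-|\vec j|}$ (here $\vec j$ is a $d$-tuple of integers $\ge-1$ and $|\vec j|$ is the sum of its nonnegative entries), and $\mu_{\vec j,\vec m,\vec k}(f)=\langle f,h_{\vec j,\vec m,\vec k}\rangle$; then
\[ \left\|f|S_{p,q}^rB([0,1)^d)\right\|\asymp\left(\sum_{\vec j}b^{|\vec j|(r+1-1/p)q}\left(\sum_{\vec m,\vec k}\left|\mu_{\vec j,\vec m,\vec k}(f)\right|^p\right)^{q/p}\right)^{1/q} \]
(with the usual supremum modification if $q=\infty$), so it suffices to bound the right-hand side for $f=D_{\P_n^b}$. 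Since $\chi_{[0,x)}(z)=\prod_{i=1}^d\chi_{(z_i,1)}(x_i)$ and the Haar functions are tensor products, the coefficients factor as
\[ \mu_{\vec j,\vec m,\vec k}(D_{\P_n^b})=\frac1{b^n}\sum_{z\in\P_n^b}\prod_{i=1}^dF_{j_i,m_i,k_i}(z_i)-\prod_{i=1}^dG_{j_i,m_i,k_i}, \]
where $F_{j,m,k}(t):=\int_t^1h_{j,m,k}$ and $G_{j,m,k}:=\int_0^1t\,h_{j,m,k}=\int_0^1F_{j,m,k}$. I shall use the elementary one-dimensional facts that $F_{j,m,k}$ is supported in $I_{j,m}$ with $\|F_{j,m,k}\|_\infty\lesssim b^{-\max(j,0)}$ and $|G_{j,m,k}|\lesssim b^{-2\max(j,0)}$, and that, with $c_{j,m,k}:=b^{\max(j,0)}G_{j,m,k}$, the function $\widetilde F_{j,m,k}:=F_{j,m,k}-c_{j,m,k}\,\chi_{I_{j,m}}$ has mean zero on $I_{j,m}$ and, after the affine rescaling of $I_{j,m}$ onto $[0,1)$, equals $b^{-\max(j,0)}$ times one of finitely many fixed, mean-zero, continuous piecewise affine functions on $[0,1)$, in particular of total variation $\lesssim1$.

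The core is a case split on $|\vec j|$. Expand $\prod_iF_{j_i,m_i,k_i}=\prod_i\bigl(c_{j_i,m_i,k_i}\chi_{I_{j_i,m_i}}+\widetilde F_{j_i,m_i,k_i}\bigr)$ over subsets $w\subseteq\{1,\dots,d\}$. If $|\vec j|\le n-v$, the $(v,n,d)$-net property forces $I_{\vec j,\vec m}$ to contain exactly $b^{n-|\vec j|}$ points of $\P_n^b$, so the $w=\varnothing$ term reproduces $\prod_iG_{j_i,m_i,k_i}$ exactly and
\[ \mu_{\vec j,\vec m,\vec k}(D_{\P_n^b})=\sum_{\varnothing\ne w\subseteq\{1,\dots,d\}}\ \prod_{i\notin w}c_{j_i,m_i,k_i}\cdot\frac1{b^n}\sum_{z\in\P_n^b\cap I_{\vec j,\vec m}}\prod_{i\in w}\widetilde F_{j_i,m_i,k_i}(z_i). \]
Restricting $\P_n^b$ to the elementary box $I_{\vec j,\vec m}$ and rescaling to $[0,1)^d$ yields, by the standard fact that such a restriction of a digital $(v,n,d)$-net is a digital $(v,n-|\vec j|,d)$-net, a point set $\P'$ on $b^{n-|\vec j|}$ points; pulling the constants $b^{-\max(j_i,0)}$ out of the inner sum and using that the corresponding product of rescaled mean-zero functions has mean zero, that inner sum becomes $b^{-|\vec j|}$ times the quasi-Monte Carlo integration error over $\P'$ of a fixed function of Hardy--Krause variation $\lesssim_d1$. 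By the Koksma--Hlawka inequality and the classical star-discrepancy bound for nets, $D^*(\P')\lesssim_{d,b,v}(n-|\vec j|)^{d-1}b^{-(n-|\vec j|)}$, so (also using $|c_{j,m,k}|\lesssim b^{-\max(j,0)}$)
\[ \left|\mu_{\vec j,\vec m,\vec k}(D_{\P_n^b})\right|\lesssim_{d,b,v}(n-|\vec j|)^{d-1}\,b^{-n}\,b^{-|\vec j|}\qquad\text{for }\ |\vec j|\le n-v. \]
For $|\vec j|\ge n-v$ one argues crudely: $\prod_iF_{j_i,m_i,k_i}$ is supported in $I_{\vec j,\vec m}$, an elementary box of volume $\le b^{v-n}$, which lies inside a single elementary box of volume $b^{v-n}$ and thus contains at most $b^v$ points of $\P_n^b$; together with $\prod_i|G_{j_i,m_i,k_i}|\lesssim b^{-2|\vec j|}$ this gives $|\mu_{\vec j,\vec m,\vec k}(D_{\P_n^b})|\lesssim_{b,v}b^{-n}b^{-|\vec j|}$. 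In either regime the number of $(\vec m,\vec k)$ with $\mu_{\vec j,\vec m,\vec k}(D_{\P_n^b})\ne0$ is $\lesssim_{b,d}\min(b^{|\vec j|},b^n)$.

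It remains to substitute and sum, using that there are $\lesssim_d\max(\ell,1)^{d-1}$ indices $\vec j$ with $|\vec j|=\ell$. Carrying out the inner $\ell_p$-summation over $(\vec m,\vec k)$, the level $\ell$ contributes to $\|D_{\P_n^b}|S_{p,q}^rB\|^q$ at most $\lesssim\ell^{d-1}\,b^{n(r-1)q}\,b^{-q(1/p-r)(\ell-n)}$ when $\ell\ge n$ (geometric decay, since $r<1/p$); in total at most $\lesssim_v n^{d-1}b^{n(r-1)q}$ over the $O_v(1)$ levels $n-v\le\ell<n$; and at most $\lesssim\ell^{d-1}(n-\ell)^{(d-1)q}\,b^{n(r-1)q}\,b^{-qr(n-\ell)}$ when $\ell<n-v$. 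The first two ranges sum to $\lesssim n^{d-1}b^{n(r-1)q}$ at once, and for the third the substitution $m=n-\ell$ turns the sum into $\lesssim b^{n(r-1)q}\,n^{d-1}\sum_{m\ge1}m^{(d-1)q}b^{-qrm}$, which converges because $r>0$; hence $\|D_{\P_n^b}|S_{p,q}^rB\|^q\lesssim_{p,q,r,d,b,v}n^{d-1}b^{n(r-1)q}$, i.e. the asserted bound (the case $q=\infty$ being identical with suprema). The main obstacle is the low-frequency estimate: using only the combinatorial net property one would see the star-discrepancy of the \emph{whole} net, i.e.\ a factor $n^{d-1}$ at every level $\ell$, and summing that against the weights would cost an extra $n^{d-1}$; the essential point is to restrict to $I_{\vec j,\vec m}$ first, so that one only incurs the discrepancy $(n-|\vec j|)^{d-1}b^{-(n-|\vec j|)}$ of a net on $b^{n-|\vec j|}$ points, and this smaller factor is exactly what makes the level sum converge to the optimal power $n^{d-1}$.
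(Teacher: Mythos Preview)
Your overall strategy is sound and reaches the same Haar--coefficient estimates as the paper, but by a genuinely different route. The paper obtains the bounds of Lemma~\ref{prp_ord1} via the Walsh expansion $D_{\P_n^b}=\sum_{t\in\Dn(\Cn)\setminus\{\vec 0\}}\hat\chi_{[0,\cdot)}(t)$, pairs with $h_{j,m,l}$ through Walsh functions, and then counts dual--net elements with prescribed digit lengths (Lemma~\ref{card_such_t}). You instead use the geometric $(v,n,d)$-net property directly: restrict to the box $I_{\vec j,\vec m}$, rescale to get a $(v,n-|\vec j|,d)$-net, and apply Koksma--Hlawka together with the classical star--discrepancy bound for nets. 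Both routes land on the same estimate $|\mu_{\vec j,\vec m,\vec k}|\preceq (n-|\vec j|)^{d-1}b^{-n-|\vec j|}$ for $|\vec j|<n-v$, and your final level--sum computation matches the paper's. Your argument is more elementary in that it avoids Walsh analysis and the dual--net counting lemma, at the cost of importing the star--discrepancy bound as a black box; the paper's Walsh machinery, on the other hand, is exactly what produces the sharper order--$2$ estimate (Lemma~\ref{prp_ord2}), which your restriction--plus--Koksma--Hlawka approach would not recover, since the star discrepancy of an order--$2$ net is no better than that of an order--$1$ net.

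There is one genuine gap. Your claim that the number of $(\vec m,\vec k)$ with $\mu_{\vec j,\vec m,\vec k}(D_{\P_n^b})\neq 0$ is $\lesssim\min(b^{|\vec j|},b^n)$ is false when $|\vec j|>n$: the volume term $-\prod_iG_{j_i,m_i,k_i}$ is nonzero for \emph{every} $\vec m$, so essentially all $\sim b^{|\vec j|}$ coefficients are nonzero. Consequently your level--$\ell$ bound $\ell^{d-1}b^{n(r-1)q}b^{-q(1/p-r)(\ell-n)}$ for $\ell\ge n$ is missing a piece. The fix is exactly what the paper does in Lemma~\ref{prp_ord1}\eqref{prp_ord1_part1} and in the proof of Theorem~\ref{main_thm_SpqrB_ord1}: split the inner $\ell_p$--sum into the at most $b^n$ boxes containing a point of $\P_n^b$ (where your bound $|\mu|\preceq b^{-n-\ell}$ applies) and the remaining $\le b^{\ell}$ boxes (where $|\mu|=|\prod_iG_{j_i,m_i,k_i}|\preceq b^{-2\ell}$). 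The second piece contributes $\preceq \ell^{d-1}b^{\ell(r-1)q}$ at level $\ell$, and since $r<1/p\le 1$ this sums over $\ell\ge n$ to $\preceq n^{d-1}b^{n(r-1)q}$, so the conclusion is unaffected once you add this term.
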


\begin{thm}\label{main_thm_SpqrB_ord2}
 Let $1\leq p,q\leq\infty$, ($q>1$ if $p=\infty$) and $0\leq r<1/p$. There exists a constant $C_{p,q,r,d,b,v} > 0$ such that for every positive integer $n$ and every order $2$ digital $(v,n,d)$-net $\P_n^b$ over $\F_b$ we have
\[ \left\|D_{\P_n^b}|S_{p,q}^r B([0,1)^d)\right\|\leq C_{p,q,r,d,b,v}\,b^{n(r-1)}\,n^{(d-1)/q}. \]
\end{thm}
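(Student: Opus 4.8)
The plan is to prove Theorem \ref{main_thm_SpqrB_ord2} by expanding the discrepancy function $D_{\P_n^b}$ into a Haar series and estimating the Haar coefficients, in the same spirit as the approach announced for Theorem \ref{main_thm_L2}. Concretely, I would first recall that the Besov space $S_{p,q}^rB([0,1)^d)$ with dominating mixed smoothness in the range $1/p-1<r<1/p$ admits an equivalent (quasi-)norm in terms of Haar coefficients: if $f = \sum_{\mathbf{j},\mathbf{m},\mathbf{l}} \mu_{\mathbf{j},\mathbf{m},\mathbf{l}}(f)\, h_{\mathbf{j},\mathbf{m},\mathbf{l}}$ is the Haar expansion (indexed by dyadic level vectors $\mathbf{j}$, shift vectors $\mathbf{m}$, and a component vector $\mathbf{l}$ distinguishing the ``father/mother'' cases), then
\[
\left\|f|S_{p,q}^rB\right\| \asymp \left( \sum_{\mathbf{j}} b^{|\mathbf{j}|(r-1/p+1)q} \Bigl( \sum_{\mathbf{m},\mathbf{l}} |\mu_{\mathbf{j},\mathbf{m},\mathbf{l}}(f)|^p \Bigr)^{q/p} \right)^{1/q},
\]
with the usual modifications for $p=\infty$ or $q=\infty$. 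So the theorem reduces to a good pointwise bound on the Haar coefficients $\mu_{\mathbf{j},\mathbf{m},\mathbf{l}}(D_{\P_n^b})$ in terms of the level vector $\mathbf{j}$ and the order-$2$ net structure, followed by summing the resulting geometric-type series.

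The second and central step is the Haar coefficient estimate. The Haar coefficient of the ``linear part'' $-x_1\cdots x_d$ is classical and contributes only a lower-order term; the main work is bounding the coefficient of the counting part $\frac1N\sum_{z\in\P}\chi_{[0,x)}(z)$. Here I would split according to whether the Haar box at level $\mathbf{j}$ is ``fine'' (i.e.\ $|\mathbf{j}| \geq $ roughly $n$, so that the box is small compared to the net resolution) or ``coarse'' ($|\mathbf{j}| < n$). For coarse boxes one uses the equidistribution/$(v,n,d)$-net property directly: an order-$2$ digital $(v,n,d)$-net is in particular a $(t,n,d)$-net with $t$ bounded in terms of $v$ and $d$, so elementary intervals of volume $b^{-n}$ (in fact $b^{t-n}$) contain exactly the fair share of points, which kills the coefficient for $|\mathbf{j}|$ small and forces a $b^{-n}$-type decay. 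For fine boxes one has to exploit the order-$2$ property more carefully — this is where the duality/Walsh-function machinery enters, because the Haar functions on a Haar box get expanded in Walsh functions, and the order-$2$ net property controls which Walsh functions are ``annihilated'' by averaging over the net. The key gain relative to order $1$ is that the order-$2$ construction gives an extra cancellation level, which is exactly what allows $r=0$ to be included and what distinguishes Theorem \ref{main_thm_SpqrB_ord2} from Theorem \ref{main_thm_SpqrB_ord1}.

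The third step is purely bookkeeping: insert the two-regime Haar coefficient bounds into the sequence-space norm above. For each fixed $|\mathbf{j}|=:k$, the number of relevant shift vectors $\mathbf{m}$ behaves like $b^k$ up to a factor, and the number of level vectors $\mathbf{j}$ with $|\mathbf{j}|=k$ is of order $k^{d-1}$; combined with the coefficient decay (geometric in $k$ away from the threshold $k\approx n$, with the ``worst'' contribution concentrated near $k = n$), one gets a sum dominated by the $k\approx n$ terms, producing the factor $b^{n(r-1)}$ together with the logarithmic power $n^{(d-1)/q}$ coming from the $k^{d-1}$ multiplicity raised to the $1/q$. One must handle $p=\infty$ (with $q>1$) and $q=\infty$ by the obvious $\ell_\infty$/$\ell_q$ modifications; the restriction $q>1$ when $p=\infty$ is needed precisely to make the outer sum over $k$ near the threshold converge with the right power.

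The main obstacle I expect is the fine-box Haar coefficient estimate, i.e.\ translating the order-$2$ digital net property (a statement about the dual net / Walsh functions with controlled weighted degree) into a usable bound on individual Haar coefficients $\mu_{\mathbf{j},\mathbf{m},\mathbf{l}}(D_{\P_n^b})$ when $|\mathbf{j}|$ exceeds $n$. Getting the dependence on $|\mathbf{j}| - n$ right (so that the series in the final step actually converges and yields the clean $b^{n(r-1)}n^{(d-1)/q}$ rather than an extra power of $n$) is the delicate point, and it is there that the prime base $b$ rather than $b=2$ requires the arguments to be redone with $b$-adic rather than dyadic Walsh and Haar systems. Once that estimate is in hand with the correct decay, the extension of the admissible range down to $r=0$ and the summation are routine.
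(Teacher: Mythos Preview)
Your overall architecture---Haar characterization of $S_{p,q}^rB$, then Haar coefficient bounds, then summation over levels---is correct and matches the paper. But you have the roles of the coarse and fine regimes exactly reversed, and this is not a cosmetic issue: it is precisely the place where the order-$2$ gain lives.

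In the paper, the \emph{fine} boxes ($|j|_+ \ge n-\lceil v/2\rceil$) are handled by elementary counting alone, using only the order-$1$ property (each such box contains at most $b^{\lceil v/2\rceil}$ points, Lemma~\ref{nets_distribution} combined with Lemmas~\ref{lem_haar_coeff_vol}, \ref{lem_haar_coeff_number}). No Walsh machinery is needed there. The \emph{coarse} boxes ($|j|_+ < n-\lceil v/2\rceil$) are where the Walsh/duality argument enters: one writes $D_{\P_n^b}$ via Lemma~\ref{lem_discr_walsh}, expands $h_{j,m,l}$ in Walsh functions, and uses that $\varrho_2(t)>2n-v$ on the dual net to obtain (Lemma~\ref{prp_ord2})
\[
|\langle D_{\P_n^b},h_{j,m,l}\rangle|\ \preceq\ b^{-2n+v}\,(2n-v-2|j|_+)^{d-1}.
\]
This is stronger by a full factor $b^{|j|_+ - n}$ than the order-$1$ coarse bound $b^{-|j|_+-n+v}(n-v-|j|_+)^{d-1}$ of Lemma~\ref{prp_ord1}.

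Your proposed coarse-box argument (``equidistribution forces a $b^{-n}$-type decay'') only recovers the order-$1$ bound. Plugging that into the sequence norm gives, for the coarse range, a sum of the shape $b^{-nq}\sum_{\kappa=0}^{n-1} b^{\kappa r q}(n-\kappa)^{(d-1)q}\kappa^{d-1}$; at $r=0$ the factor $b^{\kappa r q}$ is constant, Lemma~\ref{index_dim_red_log} no longer applies (it needs $A>1$), and the sum picks up extra powers of $n$. With the order-$2$ coarse bound $b^{-2n+v}$ the corresponding sum is $b^{(-2n+v)q}\sum_\kappa b^{\kappa(r+1)q}(\cdot)^{(d-1)q}\kappa^{d-1}$, which is genuinely geometric for every $r\ge 0$ and yields exactly $b^{n(r-1)q}n^{d-1}$. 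So the ``delicate point'' you anticipate is not the fine-box estimate at all; it is obtaining the $b^{-2n}$ decay for coarse boxes via the order-$2$ dual-net condition, and your plan as written would only reproduce Theorem~\ref{main_thm_SpqrB_ord1}.
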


Applying embeddings between Besov and Triebel-Lizorkin spaces that we will state later we obtain the following results.

\begin{cor}\label{main_cor_SpqrF_ord1}
 Let $1\leq p,q<\infty$ and $0<r<1/\max(p,q)$. There exists a constant $C_{p,q,r,d,b,v} > 0$ such that for every positive integer $n$ and every order $1$ digital $(v,n,d)$-net $\P_n^b$ over $\F_b$ we have
\[ \left\|D_{\P_n^b}|S_{p,q}^r F([0,1)^d)\right\|\leq C_{p,q,r,d,b,v}\,b^{n(r-1)}\,n^{(d-1)/q}. \]
\end{cor}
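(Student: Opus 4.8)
The plan is to read the bound off Theorem~\ref{main_thm_SpqrB_ord1} after squeezing the Triebel--Lizorkin space between two Besov spaces with dominating mixed smoothness, using only the elementary embeddings for these scales (the deep Franke--Jawerth embeddings are not needed, and in fact would not even produce the right power of $n$ when $q>p$, since their Besov side never carries the fine index $q$). Put $u:=\max(p,q)$; then $1\le u<\infty$, and the hypothesis $0<r<1/\max(p,q)$ says precisely that $0<r<1/u$, so Theorem~\ref{main_thm_SpqrB_ord1}, applied with primary index $u$ and secondary index $q$, gives a constant with
\[ \left\|D_{\P_n^b}|S_{u,q}^rB([0,1)^d)\right\|\le C_{u,q,r,d,b,v}\,b^{n(r-1)}\,n^{(d-1)/q}. \]
It therefore suffices to establish the continuous embedding $S_{\max(p,q),q}^rB([0,1)^d)\hookrightarrow S_{p,q}^rF([0,1)^d)$; the corollary then follows at once, with $C_{p,q,r,d,b,v}$ the product of the embedding constant and $C_{u,q,r,d,b,v}$, the secondary index $q$ and hence the factor $n^{(d-1)/q}$ being carried through unchanged because it is never altered.

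For the embedding I would argue in two cases. If $q\le p$, then $u=p$ and $q=\min(p,q)$, so the assertion is just the elementary embedding $S_{p,q}^rB([0,1)^d)\hookrightarrow S_{p,q}^rF([0,1)^d)$, valid because the fine index on the left equals $\min(p,q)$. If $q>p$, then $u=q$, and I would use the coincidence $S_{q,q}^rB([0,1)^d)=S_{q,q}^rF([0,1)^d)$ (the two scales agree when integrability and fine index coincide), followed by the integrability embedding $S_{q,q}^rF([0,1)^d)\hookrightarrow S_{p,q}^rF([0,1)^d)$, which holds because $[0,1)^d$ has finite measure, so $L_q([0,1)^d)\hookrightarrow L_p([0,1)^d)$ for $p\le q$, while the fine index $q$ stays fixed. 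Both cases deliver $S_{\max(p,q),q}^rB([0,1)^d)\hookrightarrow S_{p,q}^rF([0,1)^d)$, and these are exactly the embeddings between Besov and Triebel--Lizorkin spaces with dominating mixed smoothness that are collected in the next section.

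The only step with content is this embedding, and the only delicate point is the case $q>p$: one has to invoke $S_{q,q}^rF\hookrightarrow S_{p,q}^rF$ on the bounded domain $[0,1)^d$, where it holds, rather than on $\R^d$, where the analogous inclusion fails, and one should record that $S_{q,q}^rB=S_{q,q}^rF$ is the correct endpoint identification for these scales. Everything else is bookkeeping, so I do not expect a genuine obstacle.
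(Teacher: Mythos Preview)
Your proposal is correct and follows exactly the route the paper takes: apply the embedding $S_{\max(p,q),q}^rB([0,1)^d)\hookrightarrow S_{p,q}^rF([0,1)^d)$ (this is the left-hand inclusion of Lemma~\ref{lem_emb_BF}) and then invoke Theorem~\ref{main_thm_SpqrB_ord1} with integrability parameter $\max(p,q)$. The paper simply quotes Lemma~\ref{lem_emb_BF} from the literature rather than arguing the two cases $q\le p$ and $q>p$ as you do, but the substance is identical.
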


\begin{cor}\label{main_cor_SpqrF_ord2}
 Let $1\leq p,q<\infty$ and $0\leq r<1/\max(p,q)$. There exists a constant $C_{p,q,r,d,b,v} > 0$ such that for every positive integer $n$ and every order $2$ digital $(v,n,d)$-net $\P_n^b$ over $\F_b$ we have
\[ \left\|D_{\P_n^b}|S_{p,q}^r F([0,1)^d)\right\|\leq C_{p,q,r,d,b,v}\,b^{n(r-1)}\,n^{(d-1)/q}. \]
\end{cor}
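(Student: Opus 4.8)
The plan is to obtain Corollary~\ref{main_cor_SpqrF_ord2} as a direct consequence of Theorem~\ref{main_thm_SpqrB_ord2} by means of the continuous embedding
\[ S_{\max(p,q),q}^r B([0,1)^d) \hookrightarrow S_{p,q}^r F([0,1)^d), \]
which holds for all $1\leq p,q<\infty$ and all $r$ in the range under consideration; this embedding, together with the companion relations between $B$- and $F$-spaces with dominating mixed smoothness, will be recorded in the next section among the basic properties of these spaces. Granting it, for every order~$2$ digital $(v,n,d)$-net $\P_n^b$ over $\F_b$ one has
\[ \left\|D_{\P_n^b}|S_{p,q}^r F([0,1)^d)\right\| \leq c\,\left\|D_{\P_n^b}|S_{\max(p,q),q}^r B([0,1)^d)\right\| \]
with $c$ depending only on $p,q,r,d$ and not on $n$ or on the net, and it only remains to estimate the Besov norm on the right-hand side.

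For that I would apply Theorem~\ref{main_thm_SpqrB_ord2} with the parameter triple $(\max(p,q),q,r)$ in place of $(p,q,r)$. Its hypotheses are met: since $p,q<\infty$ we have $\max(p,q)<\infty$, so the exceptional regime $p=\infty$ (with the extra requirement $q>1$) does not arise, and the smoothness restriction $0\leq r<1/\max(p,q)$ is exactly the assumption of the corollary. Theorem~\ref{main_thm_SpqrB_ord2} then gives
\[ \left\|D_{\P_n^b}|S_{\max(p,q),q}^r B([0,1)^d)\right\| \leq C\,b^{n(r-1)}\,n^{(d-1)/q}, \]
and combining this with the previous display yields the asserted bound, the final constant being the product of $c$ and $C$.

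There is no genuine obstacle here; the one point that must not be overlooked is to invoke the embedding with integrability index $\max(p,q)$ rather than the finer but, in this context, wasteful embedding $S_{p,\min(p,q)}^r B\hookrightarrow S_{p,q}^r F$: when $q>p$ the latter only produces the exponent $(d-1)/p\geq(d-1)/q$ and hence a weaker estimate. The version we need follows, for $q\leq p$, from the elementary embedding $S_{p,q}^r B\hookrightarrow S_{p,q}^r F$ (valid precisely when $q\leq p$, by the generalized Minkowski inequality), and, for $q>p$, from the coincidence $S_{q,q}^r B=S_{q,q}^r F$ together with the embedding $S_{q,q}^r F\hookrightarrow S_{p,q}^r F$ into lower integrability on the bounded domain $[0,1)^d$. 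Corollary~\ref{main_cor_SpqrF_ord1} is proved in exactly the same manner, now starting from Theorem~\ref{main_thm_SpqrB_ord1}; the only change is that the admissible range narrows to $0<r<1/\max(p,q)$, because Theorem~\ref{main_thm_SpqrB_ord1} requires $r>0$.
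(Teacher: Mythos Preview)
Your argument is correct and coincides with the paper's own proof: the paper likewise invokes the embedding $S_{\max(p,q),q}^r B([0,1)^d)\hookrightarrow S_{p,q}^r F([0,1)^d)$ (recorded as Lemma~\ref{lem_emb_BF}) and then applies Theorem~\ref{main_thm_SpqrB_ord2} with parameters $(\max(p,q),q,r)$. Your additional remarks on why this particular embedding is the right one, and your verification that the hypotheses of Theorem~\ref{main_thm_SpqrB_ord2} are met, are accurate and more explicit than what the paper writes.
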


The following results are just special cases of the last corollaries.

\begin{cor}\label{main_cor_SprH_ord1}
 Let $1\leq p<\infty$ and $0<r<1/\max(p,2)$. There exists a constant $C_{p,r,d,b,v} > 0$ such that for every positive integer $n$ and every order $1$ digital $(v,n,d)$-net $\P_n^b$ over $\F_b$ we have
\[ \left\|D_{\P_n^b}|S_p^r H([0,1)^d)\right\|\leq C_{p,r,d,b,v}\,b^{n(r-1)}\,n^{(d-1)/2}. \]
\end{cor}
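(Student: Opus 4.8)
The plan is to derive Corollary \ref{main_cor_SprH_ord1} as an immediate consequence of Corollary \ref{main_cor_SpqrF_ord1} by exploiting the coincidence of Sobolev spaces with dominating mixed smoothness with a suitable Triebel--Lizorkin scale. Concretely, the classical Littlewood--Paley-type characterization gives an isomorphism $S_p^r H([0,1)^d) = S_{p,2}^r F([0,1)^d)$ with equivalent norms for all $1 \leq p < \infty$ and $r \in \R$ (this identification will be recorded in the section where the function spaces are introduced). Hence $\|D_{\P_n^b}|S_p^r H([0,1)^d)\| \asymp \|D_{\P_n^b}|S_{p,2}^r F([0,1)^d)\|$, and the whole statement reduces to checking that the hypotheses of Corollary \ref{main_cor_SpqrF_ord1} are met when we specialize $q = 2$.

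First I would fix $q = 2$ in Corollary \ref{main_cor_SpqrF_ord1}. Its hypotheses then read $1 \leq p < \infty$ (automatic), $2 < \infty$ (automatic), and $0 < r < 1/\max(p,2)$, which is exactly the range assumed in Corollary \ref{main_cor_SprH_ord1}. So for every order $1$ digital $(v,n,d)$-net $\P_n^b$ over $\F_b$ we obtain a constant $C_{p,2,r,d,b,v} > 0$ with
\[ \left\|D_{\P_n^b}|S_{p,2}^r F([0,1)^d)\right\| \leq C_{p,2,r,d,b,v}\,b^{n(r-1)}\,n^{(d-1)/2}. \]
Combining this with the norm equivalence $\|\cdot|S_p^r H\| \asymp \|\cdot|S_{p,2}^r F\|$ — whose implied constant depends only on $p$, $r$, and $d$ — and absorbing it into the constant yields a new constant $C_{p,r,d,b,v} > 0$ for which
\[ \left\|D_{\P_n^b}|S_p^r H([0,1)^d)\right\| \leq C_{p,r,d,b,v}\,b^{n(r-1)}\,n^{(d-1)/2}, \]
which is the claimed bound, with the exponent $(d-1)/2$ appearing precisely because $q = 2$ forces $(d-1)/q = (d-1)/2$.

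There is essentially no obstacle here beyond bookkeeping: the only thing to be careful about is that the embedding/identification $S_p^r H = S_{p,2}^r F$ be stated with the correct parameter constraints, in particular that it is valid for the full range $1 \leq p < \infty$ and all real $r$ (so in particular on $0 \leq r < 1/\max(p,2)$), and that one does not accidentally need $p > 1$. Since this identification is a standard fact from the theory of spaces with dominating mixed smoothness and will be quoted in the preliminaries, the proof of the corollary is a one-line deduction. The remark ``just special cases of the last corollaries'' in the text signals exactly this: Corollary \ref{main_cor_SprH_ord1} is Corollary \ref{main_cor_SpqrF_ord1} with $q=2$ read through the isomorphism $S_{p,2}^r F = S_p^r H$, and similarly Corollary \ref{main_cor_SprH_ord2} (for order $2$ nets, allowing $r = 0$, where $S_p^0 H = L_p$) follows from Corollary \ref{main_cor_SpqrF_ord2}.
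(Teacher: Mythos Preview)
Your proposal is correct and matches the paper's proof exactly: the paper simply recalls that $S_p^r H = S_{p,2}^r F$ (in fact this is the \emph{definition} given in the preliminaries, not merely a norm equivalence) and then specializes Corollary~\ref{main_cor_SpqrF_ord1} to $q=2$. The only adjustment is that you need not worry about absorbing an equivalence constant, since the identification is by definition.
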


\begin{cor}\label{main_cor_SprH_ord2}
 Let $1\leq p<\infty$ and $0\leq r<1/\max(p,2)$. There exists a constant $C_{p,r,d,b,v} > 0$ such that for every positive integer $n$ and every order $2$ digital $(v,n,d)$-net $\P_n^b$ over $\F_b$ we have
\[ \left\|D_{\P_n^b}|S_p^r H([0,1)^d)\right\|\leq C_{p,r,d,b,v}\,b^{n(r-1)}\,n^{(d-1)/2}. \]
\end{cor}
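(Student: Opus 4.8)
The plan is to obtain this statement as a direct specialization of Corollary \ref{main_cor_SpqrF_ord2}, using the fact that the Sobolev space of dominating mixed smoothness is a member of the Triebel--Lizorkin scale. Concretely, for $1\leq p<\infty$ and any $r\in\R$ one has the norm equivalence $S_p^rH([0,1)^d)=S_{p,2}^rF([0,1)^d)$, with equivalence constants depending only on $p$, $r$ and $d$; this is the classical Littlewood--Paley characterization of the spaces $S_p^rH$, and it will be recorded together with the definitions of these spaces and the Besov--Triebel--Lizorkin embeddings already invoked above before the Triebel--Lizorkin corollaries.

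Granting this identification, I would apply Corollary \ref{main_cor_SpqrF_ord2} with the parameter choice $q=2$. Its admissible range $1\leq p,q<\infty$ and $0\leq r<1/\max(p,q)$ then becomes exactly $1\leq p<\infty$ and $0\leq r<1/\max(p,2)$, which is the hypothesis of the present statement, and its conclusion $b^{n(r-1)}\,n^{(d-1)/q}$ becomes $b^{n(r-1)}\,n^{(d-1)/2}$. Hence, for every positive integer $n$ and every order $2$ digital $(v,n,d)$-net $\P_n^b$ over $\F_b$,
\[ \left\|D_{\P_n^b}|S_p^rH([0,1)^d)\right\| \leq c_{p,r,d}\left\|D_{\P_n^b}|S_{p,2}^rF([0,1)^d)\right\| \leq c_{p,r,d}\,C_{p,2,r,d,b,v}\,b^{n(r-1)}\,n^{(d-1)/2}, \]
where $c_{p,r,d}$ is the norm-equivalence constant, and one sets $C_{p,r,d,b,v}:=c_{p,r,d}\,C_{p,2,r,d,b,v}$.

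Since the entire content is this reduction, there is essentially no obstacle beyond citing the correct literature: the only point requiring care is the equivalence $S_p^rH=S_{p,2}^rF$, which is standard (see Triebel's treatment of function spaces with dominating mixed smoothness) and holds for all $1\leq p<\infty$, hence in particular on the range relevant here. If one preferred not to route through Corollary \ref{main_cor_SpqrF_ord2} as a black box, an equally valid route would be to feed the Haar-coefficient bounds for digital nets underlying Theorems \ref{main_thm_SpqrB_ord1}--\ref{main_thm_SpqrB_ord2} directly into the sequence-space norm describing $S_{p,2}^rF$; but this merely reproduces the argument already used for the Triebel--Lizorkin corollary and brings nothing new.
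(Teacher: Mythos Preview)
Your proposal is correct and follows exactly the paper's approach: the paper simply recalls that $S_p^r H = S_{p,2}^r F$ and deduces the corollary from Corollary~\ref{main_cor_SpqrF_ord2} with $q=2$. The only minor simplification is that in this paper $S_p^r H([0,1)^d)$ is \emph{defined} as $S_{p,2}^r F([0,1)^d)$, so no separate norm-equivalence constant $c_{p,r,d}$ is needed.
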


\begin{cor}\label{main_thm_Lp}
 Let $1\leq p<\infty$. There exists a constant $C_{p,d,b,v} > 0$ such that for every positive integer $n$ and every order $2$ digital $(v,n,d)$-net $\P_n^b$ over $\F_b$ we have
\[ \left\|D_{\P_n^b}|L_p([0,1)^d)\right\|\leq C_{p,d,b,v}\,\frac{n^{(d-1)/2}}{b^n}. \]
\end{cor}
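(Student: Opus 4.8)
The plan is to deduce Corollary~\ref{main_thm_Lp} from the estimates already established, by specializing the smoothness parameter to $r=0$. For the range $2\le p<\infty$ I would apply Corollary~\ref{main_cor_SprH_ord2} with $r=0$, which is admissible since then $1/\max(p,2)=1/p>0$; this gives
\[
\left\|D_{\P_n^b}\,|\,S_p^0H([0,1)^d)\right\|\le C_{p,d,b,v}\,b^{-n}\,n^{(d-1)/2}.
\]
(Equivalently one may use Corollary~\ref{main_cor_SpqrF_ord2} with $q=2$, since $S_p^rH=S_{p,2}^rF$.) Now $b^{n(r-1)}=1/b^n$ at $r=0$, so it only remains to identify $S_p^0H([0,1)^d)$ with $L_p([0,1)^d)$. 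This is the well-known fact that $S_{p,2}^0F([0,1)^d)=L_p([0,1)^d)$ with equivalent norms for $1<p<\infty$ --- the dominating-mixed-smoothness version of the Littlewood--Paley characterization of $L_p$, obtained by tensorizing the one-dimensional result --- and it immediately yields the claimed bound for $1<p<\infty$, in particular for $2\le p<\infty$.

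For the remaining range $1\le p\le 2$ (including the endpoint $p=1$, where $S_p^0H$ is a Hardy-type space rather than $L_p$) I would instead route through the $L_2$-bound. Since $[0,1)^d$ has total measure $1$, Jensen's (or H\"older's) inequality gives $\|f\,|\,L_p([0,1)^d)\|\le\|f\,|\,L_2([0,1)^d)\|$ for all $1\le p\le 2$; applying this with $f=D_{\P_n^b}$ and invoking Theorem~\ref{main_thm_L2} (which applies since $\P_n^b$ is an order~$2$ digital net) yields
\[
\left\|D_{\P_n^b}\,|\,L_p([0,1)^d)\right\|\le\left\|D_{\P_n^b}\,|\,L_2([0,1)^d)\right\|\le C_{d,b,v}\,\frac{n^{(d-1)/2}}{b^n}.
\]
At $p=2$ this agrees with the first part, so the two arguments together cover the whole range $1\le p<\infty$.

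This is a packaging of results already in hand, so there is no substantial obstacle. The only point needing care is that the Triebel--Lizorkin/Sobolev estimate at $r=0$ is genuinely an $L_p$-estimate only for $1<p<\infty$, so the endpoint $p=1$ cannot be obtained that way and must be handled through Theorem~\ref{main_thm_L2} together with the monotonicity of $L_p$-norms on a probability space; it is convenient to treat all of $1\le p\le 2$ this way. One should also verify that $r=0$ lies in the admissible range of the corollary being invoked, which it does, since that range is $0\le r<1/\max(p,2)$.
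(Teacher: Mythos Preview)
Your argument is correct and, for $1<p<\infty$, coincides exactly with the paper's proof: apply Corollary~\ref{main_cor_SprH_ord2} at $r=0$ and use the identification $S_p^0H([0,1)^d)=L_p([0,1)^d)$. The only difference is at the endpoint $p=1$. The paper simply asserts $S_p^0H=L_p$ without restricting $p$ and reads off the result in one line, whereas you (correctly) observe that the Littlewood--Paley identification $S_{p,2}^0F=L_p$ is normally stated only for $1<p<\infty$ and instead handle $p=1$ (in fact all of $1\le p\le 2$) via the elementary inequality $\|f|L_p\|\le\|f|L_2\|$ on a probability space together with Theorem~\ref{main_thm_L2}. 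Both routes are essentially immediate; yours is the more scrupulous treatment of the endpoint.
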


The difference in the results of Theorem \ref{main_thm_SpqrB_ord1} and Theorem \ref{main_thm_SpqrB_ord2} seems to be small. But the point is that an order $2$ digital net is also an order $1$ digital net, so assuming a stronger condition we enlarge the range of the parameter $r$, namely adding the case $r = 0$, which is essential to obtain results for $L_p$-spaces.

We state the results with implicit constants depending on $v$ though we get this dependence explicitly. The readers interested in the $v$-dependency can find it in the proofs of the theorems, namely \eqref{main_thm_L2_v_dep}, \eqref{main_thm_SpqrB_ord1_v_dep} and \eqref{main_thm_SpqrB_ord2_v_dep}.

We point out that obviously Theorem \ref{main_thm_L2} is a consequence of Corollary \ref{main_thm_Lp}. Nevertheless, we will prove them independently, so that readers without a background in function spaces with dominating mixed smoothness (which is required for the proof of Corollary~\ref{main_thm_Lp}) will be able to understand the proof of the $L_2$ bound.

Theorems \ref{main_thm_SpqrB_ord1} and \ref{main_thm_SpqrB_ord2} are consistent with older results. The proofs in \cite{M13b} only relied on order $1$ digital $(v,n,d)$-net properties of the Chen-Skriganov point sets and not the large Hamming weight so the weeker result was obtained while (generalized) Hammersley point sets used by Hinrichs and Markhasin are order $2$ digital $(0,n,2)$-nets and yielded a stronger result.

The bounds on the discrepancy in Besov spaces is closely connected to the integration error. We refer to \cite{T10}, \cite[Chapter 5]{M13c} and \cite{U14} for more information on this connection and for error bounds in Besov, Triebel-Lizorkin and Sobolev spaces with dominating mixed smoothness.

\section{Function spaces with dominating mixed smoothness}
We define the spaces $S_{p,q}^r B([0,1)^d)$, $S_{p,q}^r F([0,1)^d)$ and $S_p^r H([0,1)^d)$ according to \cite{T10}. Let $\mathcal{S}(\R^d)$ denote the Schwartz space and $\mathcal{S}'(\R^d)$ the space of tempered distributions on $\R^d$. For $\varphi\in\mathcal{S}(\R^d)$ we denote by $\mathcal{F}\varphi$ the Fourier transform of $\varphi$ and extend it to $\mathcal{S}'(\R^d)$ in the usual way. For $f\in \mathcal{S}'(\R^d)$ the Fourier transform is given as $\mathcal{F} f(\varphi) = f(\mathcal{F}\varphi),\; \varphi\in\mathcal{S}(\R^d)$. Analogously we proceed with the inverse Fourier transform $\mathcal{F}^{-1}$.

Let $\varphi_0 \in \mathcal{S}(\R)$ satisfy $\varphi_0(x) = 1$ for $|x| \leq 1$ and $\varphi_0(x) = 0$ for $|x| > \frac{3}{2}$. Let $\varphi_k(x) = \varphi_0(2^{-k} x) - \varphi_0(2^{-k + 1} x)$ where $x \in \R, \, k \in \N$ and $\varphi_{\bar{k}}(x) = \varphi_{k_1}(x_1) \cdots \varphi_{k_d}(x_d)$ where $\bar{k} = (k_1,\ldots,k_d) \in \N_0^d$ and $x = (x_1,\ldots,x_d) \in \R^d$. The functions $\varphi_{\bar{k}}$ are a dyadic resolution of unity since
\[ \sum_{{\bar{k}} \in \N_0^d} \varphi_k(x) = 1 \]
for all $x \in \R^d$. The functions $\mathcal{F}^{-1}(\varphi_{\bar{k}} \mathcal{F} f)$ are entire analytic functions for every $f \in \mathcal{S}'(\R^d)$.

Let $0 < p,q \leq \infty$ and $r \in \R$. The Besov space with dominating mixed smoothness $S_{p,q}^r B(\R^d)$ consists of all $f \in \mathcal{S}'(\R^d)$ with finite quasi-norm
\begin{align}
\left\| f | S_{p,q}^r B(\R^d) \right\| = \left( \sum_{{\bar{k}} \in \N_0^d} 2^{r (k_1 + \ldots + k_d) q} \left\| \mathcal{F}^{-1}(\varphi_{\bar{k}} \mathcal{F} f) | L_p(\R^d) \right\|^q \right)^{\frac{1}{q}}
\end{align}
with the usual modification if $q = \infty$.

Let $0 < p < \infty$, $0 < q \leq \infty$ and $r \in \R$. The Triebel-Lizorkin space with dominating mixed smoothness $S_{p,q}^r F(\R^d)$ consists of all $f \in \mathcal{S}'(\R^d)$ with finite quasi-norm
\begin{align}
\left\| f | S_{p,q}^r F(\R^d) \right\| = \left\| \left( \sum_{{\bar{k}} \in \N_0^d} 2^{r (k_1 + \ldots + k_d) q} \left| \mathcal{F}^{-1}(\varphi_{\bar{k}} \mathcal{F} f)(\cdot) \right|^q \right)^{\frac{1}{q}} | L_p(\R^d) \right\|
\end{align}
with the usual modification if $q = \infty$.

Let $\mathcal{D}([0,1)^d)$ consist of all complex-valued infinitely differentiable functions on $\R^d$ with compact support in the interior of $[0,1)^d$ and let $\mathcal{D}'([0,1)^d)$ be its dual space of all distributions in $[0,1)^d$. The Besov space with dominating mixed smoothness $S_{p,q}^r B([0,1)^d)$ consists of all $f \in \mathcal{D}'([0,1)^d)$ with finite quasi-norm
\begin{align}
\left\| f | S_{p,q}^r B([0,1)^d) \right\| = \inf \left\{ \left\| g | S_{p,q}^r B(\R^d) \right\| : \: g \in S_{p,q}^r B(\R^d), \: g|_{[0,1)^d} = f \right\}.
\end{align}
The Triebel-Lizorkin space with dominating mixed smoothness $S_{p,q}^r F([0,1)^d)$ consists of all $f \in \mathcal{D}'([0,1)^d)$ with finite quasi-norm
\begin{align}
\left\| f | S_{p,q}^r F([0,1)^d) \right\| = \inf \left\{ \left\| g | S_{p,q}^r F(\R^d) \right\| : \: g \in S_{p,q}^r F(\R^d), \: g|_{[0,1)^d} = f \right\}.
\end{align}
The spaces $S_{p,q}^r B(\R^d), \, S_{p,q}^r F(\R^d), \, S_{p,q}^r B([0,1)^d)$ and $S_{p,q}^r F([0,1)^d)$ are quasi-Banach spaces. We define the Sobolev space with dominating mixed smoothness as
\begin{align}
 S_p^r H([0,1)^d) = S_{p,2}^r F([0,1)^d).
\end{align}
If $r \in \N_0$ then it is denoted by $S_p^r W ([0,1)^d)$ and is called classical Sobolev space with dominating mixed smoothness. An equivalent norm for $S_p^r W([0,1)^d)$ is
\[ \sum_{\alpha \in \N_0^d; \, 0 \leq \alpha_i \leq r} \left\| D^{\alpha} f | L_p([0,1)^d) \right\|. \]
Of special interest is the case $r = 0$ since
\[ S_p^0 H([0,1)^d) = L_p([0,1)^d). \]

The Besov and Triebel-Lizorkin spaces can be embedded in each other (see \cite{T10} or \cite[Corollary 1.13]{M13c}). We point out that the following embedding is a combination of well known results and might look odd at the first glance.
\begin{lem}\label{lem_emb_BF}
Let $0<p,q<\infty$ and $r\in\R$. Then we have
\[ S_{\max(p,q),q}^r B([0,1)^d)\hookrightarrow S_{p,q}^r F([0,1)^d)\hookrightarrow S_{\min(p,q),q}^r B([0,1)^d). \]
\end{lem}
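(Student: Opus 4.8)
The plan is to deduce Lemma~\ref{lem_emb_BF} by combining three facts, each classical or available from \cite{T10} and \cite[Corollary~1.13]{M13c}. First, the standard Besov--Triebel--Lizorkin sandwich with a fixed first index,
\[
S_{p,\min(p,q)}^r B([0,1)^d)\hookrightarrow S_{p,q}^r F([0,1)^d)\hookrightarrow S_{p,\max(p,q)}^r B([0,1)^d),
\]
which holds for all $0<p<\infty$ and $0<q<\infty$ and is proved by comparing the $\ell_q$- and $L_p$-(quasi-)norms in the definitions via Minkowski's integral inequality (and its substitutes in the quasi-Banach range). Second, the integrability embeddings on the bounded domain,
\[
S_{p_1,q}^r X([0,1)^d)\hookrightarrow S_{p_0,q}^r X([0,1)^d),\qquad X\in\{B,F\},\ 0<p_0\le p_1<\infty,
\]
which reduce to $L_{p_1}([0,1)^d)\hookrightarrow L_{p_0}([0,1)^d)$ since $[0,1)^d$ has finite measure. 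Third, the coincidence $S_{s,s}^r B([0,1)^d)=S_{s,s}^r F([0,1)^d)$ with equivalent norms, a consequence of Fubini's theorem once the two outer exponents agree.

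With these in hand I would split into the cases $p\ge q$ and $p\le q$. If $p\ge q$, then $\max(p,q)=p$ and $\min(p,q)=q$; the left embedding $S_{p,q}^r B\hookrightarrow S_{p,q}^r F$ is the first half of the sandwich, and the right embedding follows from $S_{p,q}^r F\hookrightarrow S_{q,q}^r F=S_{q,q}^r B$, using the $F$-integrability embedding (legitimate since $q\le p$) followed by the Fubini identity. If $p\le q$, then $\max(p,q)=q$ and $\min(p,q)=p$; now the right embedding $S_{p,q}^r F\hookrightarrow S_{p,q}^r B$ is the second half of the sandwich, and the left embedding follows from $S_{q,q}^r B=S_{q,q}^r F\hookrightarrow S_{p,q}^r F$, again by Fubini and then the $F$-integrability embedding (legitimate since $p\le q$). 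In either case both displayed inclusions of the lemma are obtained, with norm constants depending only on $p,q,r,d$.

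The only delicate point is the bookkeeping at the interface between the spaces on $\R^d$ (for which the sandwich and the Fubini identity are literally stated) and the spaces on $[0,1)^d$ (defined by restriction): here one notes that the Fubini identity is isometric on $\R^d$ and hence descends to the infimum over extensions, while the finite-measure integrability embeddings on $[0,1)^d$ are recorded in \cite{T10} and \cite{M13c}. I do not anticipate a genuine obstacle --- as the remark preceding the lemma already signals, this is a repackaging of well-known embeddings --- so the real work is simply to keep the directions of all embeddings and the dichotomy $p\gtrless q$ straight.
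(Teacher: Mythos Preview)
The paper does not give a proof of this lemma at all; it merely records the embedding as ``a combination of well known results'' and refers to \cite{T10} and \cite[Corollary~1.13]{M13c}. Your sketch is a correct way to unpack that combination: the sandwich $S_{p,\min(p,q)}^r B\hookrightarrow S_{p,q}^r F\hookrightarrow S_{p,\max(p,q)}^r B$, the Fubini identity $S_{s,s}^r B=S_{s,s}^r F$, and the finite-measure monotonicity in $p$ are exactly the ingredients behind \cite[Corollary~1.13]{M13c}, and your case split $p\gtrless q$ assembles them in the right order. So there is no discrepancy to report --- you have supplied the argument the paper chose to suppress.
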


The reader interested in function spaces is referred to \cite{H10}, \cite{ST87} and \cite{T10} and the references given there.

A goal of this paper is to analyze the discrepancy function in spaces $S_{p,q}^r B([0,1)^d)$, $S_{p,q}^r F([0,1)^d)$ and $S_{p}^r H([0,1)^d)$. We define $S_{p,q}^r B([0,1)^d)$-discrepancy as
\[ \inf_{\P} \left\| D_{\P} | S_{p,q}^r B([0,1)^d) \right\| \]
where the infimum is taken over all point sets with $N$ points. Analogously we define $S_{p,q}^r F([0,1)^d)$-discrepancy and $S_p^r H([0,1)^d)$-discrepancy.

\section{Haar and Walsh bases}
We write $\N_{-1}=\N_0\cup\{-1\}$. Let $b\geq 2$ be an integer. We write $\D_j = \{0,1,\ldots, b^j-1\}$ and $\B_j = \{1,\ldots,b-1\}$ for $j \in \N_0$ and $\D_{-1} = \{0\}$ and $\B_{-1} = \{1\}$. For $j = (j_1,\dots,j_d)\in\N_{-1}^d$ let $\D_j = \D_{j_1}\times\ldots\times\D_{j_d}$ and $\B_j = \B_{j_1}\times\ldots\times\B_{j_d}$. For a real number $a$ we write $a_+ = \max(a,0)$ and for $j\in\N_{-1}^d$ we write $|j|_+ = j_1{}_+ + \ldots + j_d{}_+$.

For $j \in \N_0$ and $m \in \D_j$ we call the interval
\[I_{j,m} = \big[ b^{-j} m, b^{-j} (m+1) \big) \]
the $m$-th $b$-adic interval in $[0,1)$ on level $j$. We put $I_{-1,0}=[0,1)$ and call it the $0$-th $b$-adic interval in $[0,1)$ on level $-1$. For any $k = 0,\ldots,b - 1$ let $I_{j,m}^k = I_{j + 1,bm + k}$. We put $I_{-1,0}^{-1} = I_{-1,0} = [0,1)$. For $j \in \N_{-1}^d$ and $m = (m_1, \ldots, m_d) \in \D_j$ we call
\[ I_{j,m} = I_{j_1,m_1} \times \ldots \times I_{j_d,m_d}\]
the $m$-th $b$-adic interval in $[0,1)^d$ on level $j$. We call the number $|j|_+$ the order of the $b$-adic interval $I_{j,m}$. Its volume is $b^{-|j|_+}$.

Let $j \in \N_{0}$, $m \in \D_j$ and $l \in \B_j$. Let $h_{j,m,l}$ be the function on $[0,1)$ with support in $I_{j,m}$ and the constant value $\e^{\frac{2\pi \im}{b} l k}$ on $I_{j,m}^k$ for any $k=0, \ldots, b - 1$. We put $h_{-1,0,1} = \chi_{I_{-1,0}}$ on $[0,1)$ which is the characteristic function of the interval $I_{-1,0}$.

Let $j \in \N_{-1}^d$, $m \in \D_j$ and $l = (l_1, \ldots, l_d) \in \B_j$. The function $h_{j,m,l}$ given as the tensor product
\[ h_{j,m,l}(x) = h_{j_1,m_1,l_1}(x_1) \cdots h_{j_d,m_d,l_d}(x_d) \]
for $x = (x_1, \ldots, x_d) \in [0,1)^d$ is called a $b$-adic Haar function on $[0,1)^d$. The set of functions $\{h_{j,m,l}: \, j \in \N_{-1}^d, \, m \in \D_j, \, l\in\B_j\}$ is called $b$-adic Haar basis on $[0,1)^d$. We can use the Haar basis for calculating the norms of the discrepancy function.

The following result is \cite[Theorem 2.1]{M13c} and is a tool for calculating the $L_2$-discrepancy.
\begin{thm}\label{L2Haar}
The system
\[ \left\{b^{\frac{|j|_+}{2}}h_{j,m,l} \,:\,j\in\N_{-1}^d,\,m\in\D_j,\,l\in\B_j\right\} \]
is an orthonormal basis of $L_2([0,1)^d)$, an unconditional basis of $L_p([0,1)^d)$ for $1 < p < \infty$ and a conditional basis of $L_1([0,1)^d)$. For any function $f\in L_2([0,1)^d)$ we have
\[ \left\|f|L_2([0,1)^d)\right\|^2 = \sum_{j \in \N_{-1}^d} b^{|j|} \sum_{m\in\D_j,\,l\in\B_j}|\langle f,h_{j,m,l}\rangle|^2. \]
\end{thm}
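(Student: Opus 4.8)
The plan is to prove everything first in dimension one by elementary linear algebra, and then pass to $d$ dimensions by tensorisation; the two analytic ingredients (the square-function estimate for the $L_p$ statement and the uniform boundedness of the partial sums for the $L_1$ statement) are classical and will be invoked rather than reproved. In one variable, fix $j\in\N_0$ and $m\in\D_j$. By construction the functions $\chi_{I_{j,m}},h_{j,m,1},\dots,h_{j,m,b-1}$ are the images of the indicator basis $\chi_{I_{j,m}^0},\dots,\chi_{I_{j,m}^{b-1}}$ of the $b$-dimensional space of functions on $I_{j,m}$ that are constant on the children $I_{j,m}^k$, under the discrete Fourier matrix $\big(\e^{\frac{2\pi\im}{b}lk}\big)_{l,k}$, which is invertible. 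From the character relation $\sum_{k=0}^{b-1}\e^{\frac{2\pi\im}{b}(l-l')k}=0$ for $l\not\equiv l'\pmod b$ one reads off directly that $\langle h_{j,m,l},h_{j,m,l'}\rangle=0$ for $l\ne l'$ and $\|h_{j,m,l}\|_2^2=b^{-j}=b^{-|j|_+}$, and that a Haar function supported on a $b$-adic interval strictly inside $I_{j,m}$ has mean zero there whereas $h_{j,m,l}$ is constant on each child, so the two are orthogonal; the remaining cases (disjoint supports, or equal supports of different level) are immediate. Thus $\{b^{|j|_+/2}h_{j,m,l}\}$ is orthonormal in $L_2([0,1))$. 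For completeness one proves by induction on $j$, again using invertibility of the Fourier matrix, that the span of $\{h_{j',m',l'}:j'\le j\}$ equals the space of step functions constant on the level-$(j+1)$ $b$-adic intervals, and these are dense in $L_p([0,1))$, $1\le p<\infty$, by the Lebesgue differentiation theorem.

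Passing to $d$ variables, Fubini gives $\langle h_{j,m,l},h_{j',m',l'}\rangle=\prod_{i=1}^d\langle h_{j_i,m_i,l_i},h_{j'_i,m'_i,l'_i}\rangle$, so $\{b^{|j|_+/2}h_{j,m,l}\}$ is orthonormal; it is complete because a tensor product of complete orthonormal systems is complete (equivalently, its span contains the dense family of tensor products of $b$-adic step functions). Hence it is an orthonormal basis of $L_2([0,1)^d)$, and the displayed norm identity is exactly Parseval's identity for this basis.

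For $1<p<\infty$, let $E_j$ be the conditional expectation onto the $\sigma$-algebra generated by the level-$j$ $b$-adic intervals in one variable and $\Delta_{j+1}=E_{j+1}-E_j$; the Haar functions of level $j$ span the range of $\Delta_{j+1}$, and in $d$ variables one has the $d$-parameter product filtration and $\Delta_{\bar j}$ is a tensor product of such differences. Being an unconditional basis of $L_p$ is equivalent to the uniform estimate $\big\|\sum_{\bar j}\theta_{\bar j}\Delta_{\bar j}f\big\|_p\le C_{p,d}\|f\|_p$ for all $|\theta_{\bar j}|\le 1$ (together with the analogous bound inside each block, which is finite-dimensional and harmless); this is the Burkholder martingale-transform inequality, equivalently the Littlewood--Paley square-function equivalence of $\|f\|_p$ with $\big\|\big(\sum_{\bar j}|\Delta_{\bar j}f|^2\big)^{1/2}\big\|_p$, which in the product situation is obtained by iterating the one-dimensional (vector-valued) square-function estimate over the coordinates. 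Combined with the density already noted (the Schauder basis property), this gives the unconditional basis in $L_p([0,1)^d)$.

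Finally, in $L_1$: ordering the Haar functions so that $|j|_+$ is non-decreasing (a square enumeration of $\N_{-1}^d$, treating each block as finite), the partial-sum projections interpolate the operators $E_{\bar j}$ and their $d$-fold products, which are contractions on $L_1$ with $E_{\bar j}f\to f$ in $L_1$ by the martingale convergence theorem and density of step functions, while the finitely many partial sums within one block are uniformly controlled; hence the system is a Schauder basis of $L_1([0,1)^d)$ (this is also the Gelbaum--Gil de Lamadrid theorem on tensor products of Schauder bases, applied to the one-dimensional Haar basis, which is monotone in $L_1$). It is \emph{not} unconditional: already for $b=2$, $d=1$, if it were, then averaging over random signs and applying Khintchine's inequality would force $\|f\|_1$ to be comparable to $\big\|\big(\sum_j|\Delta_jf|^2\big)^{1/2}\big\|_1$, which is false --- for instance the dyadic martingale that at each step either doubles or jumps to $0$ has $L_1$-norm $1$ at every level but square function of size comparable to the number of levels, so some choice of signs on its Haar coefficients inflates the $L_1$-norm without bound; restricting to a single coordinate and, when $b\ge3$, to one frequency $l$, this obstruction embeds into the present system. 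Hence the basis is conditional in $L_1([0,1)^d)$. The only genuinely analytic inputs are the $d$-parameter square-function equivalence in $L_p$ and the uniform boundedness of the $L_1$ partial sums; everything else is linear algebra over the Fourier matrix, Fubini, and standard density, so I expect the square-function step to be the main point requiring care. $\square$
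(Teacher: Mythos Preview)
The paper does not prove this theorem at all: it is simply quoted as \cite[Theorem 2.1]{M13c} and used as a black box, so there is no argument in the paper to compare yours against.

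Your sketch is the standard route and is correct in outline: orthonormality and completeness in one variable via the DFT matrix and density of $b$-adic step functions, tensorisation to $d$ variables by Fubini, the $d$-parameter Burkholder/Littlewood--Paley square-function equivalence for unconditionality in $L_p$ ($1<p<\infty$), and uniformly bounded conditional-expectation partial sums plus a Khintchine obstruction for the conditional $L_1$ statement. The one place that deserves a word rather than ``finite-dimensional and harmless'' is the within-block unconditionality: for fixed $(j,m)$ the sign-flip operators on $\spn\{h_{j,m,l}:l\in\B_j\}$ must be bounded on $L_p$ \emph{uniformly in $(j,m)$}. This is true---the block is isometric (after scaling) to $\ell_p^b$ via evaluation on the $b$ children, and the sign-flip is conjugation of a diagonal $\pm 1$ matrix by the fixed $b\times b$ DFT matrix, so the bound depends only on $b$ and $p$---but it is the uniformity, not the finite dimensionality, that matters. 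With that remark added, your argument is sound.
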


The following result is \cite[Theorem 2.11]{M13c} and is a tool for calculating the $S_{p,q}^r B$-discrepancy.
\begin{thm}\label{SpqrBHaar}
Let $0<p,q\leq\infty$, ($q>1$ if $p=\infty$) and $1/p-1<r<\min(1/p,1)$. Let $f\in\mathcal{D}'([0,1)^d)$. Then $f\in S_{p,q}^r B([0,1)^d)$ if and only if it can be represented as
\begin{align}\label{repres_f}
f = \sum_{j\in\N_{-1}^d} b^{|j|_+}\sum_{m\in\D_j,\,l\in\B_j}\mu_{j,m,l}\,h_{j,m,l}
\end{align}
for some sequence $(\mu_{j,m,l})$ satisfying
\begin{align} \label{eq_quasinorm}
\left(\sum_{j\in\N_{-1}^d} b^{|j|_+(r-1/p+1)q}\left(\sum_{m\in\D_j,\,l\in\B_j}|\mu_{j,m,l}|^p\right)^{q/p}\right)^{1/q}<\infty.
\end{align}
The convergence of \eqref{repres_f} is unconditional in $\mathcal{D}'([0,1)^d)$ and in any $S_{p,q}^\rho B([0,1)^d)$ with $\rho < r$. The representation \eqref{repres_f} of $f$ is unique with the $b$-adic Haar coefficients $\mu_{j,m,l} = \langle f,h_{j,m,l}\rangle$.
The expression \eqref{eq_quasinorm} is an equivalent quasi-norm in $S_{p,q}^r B([0,1)^d)$.
\end{thm}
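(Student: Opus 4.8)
\emph{Strategy.} The statement is a wavelet-type characterisation, and I would obtain it from the atomic decomposition of $S_{p,q}^r B(\R^d)$ (for sufficiency) and the local means characterisation of $S_{p,q}^r B(\R^d)$ (for necessity), both available in \cite{T10}, after recognising the normalised $b$-adic Haar system as an admissible system in each reading. The passage between $\R^d$ and $[0,1)^d$ is transparent because each $h_{j,m,l}$ has support in $[0,1)^d$ and hence equals its own extension by zero. Two features of the Haar functions control everything: each $h_{j,m,l}$ is bounded by $1$ and, in every coordinate $i$ with $j_i\geq 0$, has vanishing integral, since $\sum_{k=0}^{b-1}\e^{2\pi\im l_ik/b}=0$ for $l_i\in\B_{j_i}$ --- so it carries exactly one cancellation per oscillating coordinate and no regularity beyond $L_\infty$. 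The upper bound $r<\min(1/p,1)$ is precisely what makes the limited regularity of a piecewise constant function still sufficient and puts $h_{j,m,l}$ into $S_{p,q}^r B$; the lower bound $1/p-1<r$ is precisely what puts $h_{j,m,l}$ into $S_{p',q'}^{-r}B$, so that the coefficients can be read off by the dual pairing. Since the $h_{j,m,l}$ are tensor products, both conditions need only hold coordinatewise, which is the hypothesis; this also shows the range is optimal for Haar functions.

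\emph{Sufficiency and convergence.} Given $(\mu_{j,m,l})$ with \eqref{eq_quasinorm} finite, write \eqref{repres_f} as $\sum_{j\in\N_{-1}^d}f_j$ with $f_j=b^{|j|_+}\sum_{m\in\D_j,\,l\in\B_j}\mu_{j,m,l}h_{j,m,l}$. At a fixed level $j$ the functions $h_{j,m,l}$ have pairwise disjoint supports in $m$, and on each sub-box of $I_{j,m}$ obtained by one $b$-adic refinement in every oscillating coordinate the sum over $l$ is a character sum; equivalence of norms on a finite-dimensional space, with constants uniform in $j$ because the number of oscillating coordinates is at most $d$, gives
\[ \left\|f_j\,|\,L_p([0,1)^d)\right\|\asymp b^{|j|_+(1-1/p)}\left(\sum_{m\in\D_j,\,l\in\B_j}|\mu_{j,m,l}|^p\right)^{1/p}. \]
Thus $h_{j,m,l}$ is, up to a fixed constant, an $L_\infty$-normalised atom supported in a box of volume $b^{-|j|_+}$, with attached coefficients $\lambda_{j,m,l}=b^{|j|_+}\mu_{j,m,l}$, and the associated atomic sequence norm $\big(\sum_j b^{|j|_+(r-1/p)q}(\sum_{m,l}|\lambda_{j,m,l}|^p)^{q/p}\big)^{1/q}$ equals \eqref{eq_quasinorm}. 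Feeding this into the atomic decomposition theorem for $S_{p,q}^r B(\R^d)$ --- admissible atoms in the stated $r$-range --- yields $f\in S_{p,q}^r B([0,1)^d)$ with norm controlled by \eqref{eq_quasinorm}. Applying the same bound to the tails $\sum_{|j|_+>M}f_j$ and, for $\rho<r$, in $\|\cdot\,|\,S_{p,q}^\rho B\|$ gives unconditional convergence of \eqref{repres_f} in each $S_{p,q}^\rho B([0,1)^d)$ with $\rho<r$, and a fortiori in $\mathcal{D}'([0,1)^d)$.

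\emph{Necessity, reconstruction, uniqueness.} Let $f\in S_{p,q}^r B([0,1)^d)$. Since $h_{j,m,l}$ is bounded with support in $[0,1)^d$ and, by the sufficiency step applied to the indices $(p',q',-r)$ (whose admissibility reduces to the same $1/p-1<r<1/p$), lies in $S_{p',q'}^{-r}B$, the pairing $\mu_{j,m,l}:=\langle f,h_{j,m,l}\rangle$ is well defined. The estimate $\big(\sum_j b^{|j|_+(r-1/p+1)q}(\sum_{m,l}|\mu_{j,m,l}|^p)^{q/p}\big)^{1/q}\lesssim\|f\,|\,S_{p,q}^r B\|$ follows from the local means characterisation of \cite{T10} once one checks the Haar functions are admissible kernels in this range (for $1<p,q<\infty$ one may instead dualise the sufficiency bound, writing the left side as a supremum over normalised dual sequences and identifying the pairings with $\langle f,g\rangle$, $g\in S_{p',q'}^{-r}B$). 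The identity $f=\sum_j b^{|j|_+}\sum_{m,l}\mu_{j,m,l}h_{j,m,l}$ in $\mathcal{D}'$ follows from density of smooth functions and uniform boundedness of the Haar partial sum operators on $S_{p,q}^\rho B$, which the two-sided estimates provide; together these estimates also give the equivalent quasi-norm. For uniqueness, if a series as in \eqref{repres_f} vanishes in $\mathcal{D}'([0,1)^d)$, pick $\rho$ with $1/p-1<\rho<r$ so that the convergence also holds in $S_{p,q}^\rho B$ and each $h_{j',m',l'}$ lies in the dual space; pairing termwise and using $\langle h_{j,m,l},h_{j',m',l'}\rangle=b^{-|j|_+}$ when $(j,m,l)=(j',m',l')$ and $0$ otherwise (the orthogonality underlying Theorem \ref{L2Haar}) forces $\mu_{j',m',l'}=0$.

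\emph{Main obstacle.} The real work is the coefficient estimate in the necessity part, i.e.\ bounding $\langle f,h_{j,m,l}\rangle$ by the Fourier-analytic norm of $f$. This rests on an almost-orthogonality estimate between the Haar functions and the Littlewood--Paley blocks $\mathcal{F}^{-1}(\varphi_{\bar k}\mathcal{F}f)$, using the single vanishing moment of $h_{j,m,l}$ in each oscillating coordinate to gain decay when $b^{k_i}\gg b^{j_i}$ and its bounded, localised support to gain decay when $b^{k_i}\ll b^{j_i}$, then summing over $\bar k\in\N_0^d$ with constants uniform in $j$ --- in particular in the number of oscillating coordinates. Doing this with the dominating mixed structure and at the excluded endpoints (whence $q>1$ when $p=\infty$, and the strict inequalities for $r$) is the crux; the rest is the disjoint-support bookkeeping above together with the cited $\R^d$ machinery.
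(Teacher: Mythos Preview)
The paper does not prove this theorem: it is stated without proof as a quotation of \cite[Theorem~2.11]{M13c}, so there is no argument here to compare yours against. Your outline---atomic decomposition for sufficiency, a local-means/duality argument for necessity, and orthogonality for uniqueness---is indeed the standard route and is close in spirit to how such Haar characterisations are established in \cite{T10} and \cite{M13c}.

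That said, one step in your sketch is more of a declaration than a proof. For necessity you write that the coefficient estimate ``follows from the local means characterisation of \cite{T10} once one checks the Haar functions are admissible kernels in this range''; but Haar functions are not smooth, and the local-means machinery in \cite{T10} is formulated for kernels with at least some derivatives. You correctly flag the almost-orthogonality between $h_{j,m,l}$ and the Littlewood--Paley blocks as the crux, but actually carrying this out---obtaining coordinatewise decay from the single vanishing moment on one side and from support localisation on the other, uniformly over all configurations of oscillating versus non-oscillating coordinates, and summing in $\bar k$---is genuinely the content of the theorem and is not something one can simply ``check''. The same remark applies to the reconstruction identity $f=\sum_j f_j$: invoking density plus uniform boundedness of Haar partial sums presupposes exactly the two-sided estimate you are trying to establish. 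In short, your strategy is the right one, but the necessity half remains a sketch rather than a proof; for the details you would need to reproduce the arguments in \cite{M13c} (or the corresponding dyadic case in \cite{T10} together with its $b$-adic adaptation).
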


A weight from \cite{D07} will be useful for our purpose. For $\alpha \in \N$ with $b$-adic expansion $\alpha = \beta_{a_1-1} b^{a_1-1} + \ldots + \beta_{a_\nu-1} b^{a_\nu-1}$ with $0<a_1<a_2<\ldots<a_\nu$ and digits $\beta_{a_1-1},\ldots,\beta_{a_\nu-1} \in \{1,\ldots,b-1\}$, the weight of order $\sigma\in\N$ is given by
\[\varrho_\sigma(\alpha) = a_\nu+a_{\nu-1}+\ldots+a_{\max(\nu-\sigma+1,1)}. \]
Furthermore, $\varrho_\sigma(0) = 0$. It is a generalization of $\varrho_1$, first introduced in \cite{N87}.

For $\alpha = (\alpha_1, \ldots, \alpha_d) \in \N_0^d$, the weight of order $\sigma$ is given by
\[ \varrho_\sigma(\alpha) = \varrho_\sigma(\alpha_1)+\ldots+\varrho_\sigma(\alpha_d). \]

Let $\alpha\in\N$. The $\alpha$-th $b$-adic Walsh function $\wal_{\alpha}: \, [0,1) \rightarrow \Cx$ is given by
\[ \wal_{\alpha}(x) = \e^{\frac{2 \pi \im}{b} \left( \beta_{a_1-1} x_{a_1} + \ldots + \beta_{a_\nu - 1} x_{a_\nu} \right)} \]
for $x \in [0,1)$ with $b$-adic expansion  $x = x_1 b^{-1} + x_2 b^{-2} + \ldots$. Furthermore, $\wal_0=\chi_{[0,1)}$.

Let $\alpha=(\alpha_1,\ldots,\alpha_d)\in\N_0^d$. Then the $\alpha$-th $b$-adic Walsh function $\wal_{\alpha}$ on $[0,1)^d$ is given as the tensor product
\[ \wal_{\alpha}(x) = \wal_{\alpha_1}(x^1) \cdots \wal_{\alpha_d}(x^d) \]
for $x = (x^1, \ldots, x^d) \in [0,1)^d$ where by $x^i$ we mean the coordinates of $x$. The set of functions $\{\wal_{\alpha}: \, \alpha \in \N_0^d\}$ is called $b$-adic Walsh basis on $[0,1)^d$.

The $b$-adic Walsh function $\wal_{\alpha}$ is constant on $b$-adic intervals $I_{(\varrho_1(\alpha_1),\ldots,(\varrho_1(\alpha_d)),m}$ for every $m \in \D_{(\varrho_1(\alpha_1),\ldots,(\varrho_1(\alpha_d))}$. The following result is \cite[Theorem A.11]{DP10}.
\begin{lem}\label{Walsh_basis}
The system
\[ \left\{ \wal_{\alpha} \, : \, \alpha \in \N_0^d \right\} \]
is an orthonormal basis of $L_2([0,1)^d)$.
\end{lem}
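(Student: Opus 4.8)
The plan is to reduce the statement to the one-dimensional case and then to verify orthonormality and completeness separately there. Since $L_2([0,1)^d)$ is the $d$-fold Hilbert space tensor product of $L_2([0,1))$ with itself, and the functions $\wal_\alpha$ on $[0,1)^d$ are precisely the elementary tensors $\wal_{\alpha_1}\otimes\cdots\otimes\wal_{\alpha_d}$, a standard fact of Hilbert space theory gives that $\{\wal_\alpha:\alpha\in\N_0^d\}$ is an orthonormal basis of $L_2([0,1)^d)$ as soon as $\{\wal_\alpha:\alpha\in\N_0\}$ is an orthonormal basis of $L_2([0,1))$. So it suffices to treat $d=1$.

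For orthonormality I would use the carry-free digit structure. Writing $\alpha\in\N_0$ as $\alpha=\sum_{i\geq1}\alpha_ib^{i-1}$ with digits $\alpha_i\in\{0,\ldots,b-1\}$ (all but finitely many zero) and $x\in[0,1)$ as $x=\sum_{i\geq1}x_ib^{-i}$, one has $\wal_\alpha(x)=\e^{\frac{2\pi\im}{b}\sum_{i\geq1}\alpha_ix_i}$, hence for every $x$
\[ \wal_\alpha(x)\,\overline{\wal_{\alpha'}(x)}=\e^{\frac{2\pi\im}{b}\sum_{i\geq1}(\alpha_i-\alpha_i')x_i}=\wal_\gamma(x), \]
where $\gamma$ is the non-negative integer whose $i$-th digit is $\alpha_i-\alpha_i'$ reduced modulo $b$ (using that $\e^{\frac{2\pi\im}{b}m}$ depends on $m$ only modulo $b$ and that $x_i\in\{0,\ldots,b-1\}$). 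Thus $\langle\wal_\alpha,\wal_{\alpha'}\rangle=\int_0^1\wal_\gamma(x)\,\dint x$, and I would finish by computing this integral: it equals $1$ when $\gamma=0$, i.e. $\alpha=\alpha'$, since then $\wal_0\equiv1$; and when $\gamma\neq0$, picking the largest index $M$ with a nonzero digit, $\wal_\gamma$ depends only on $x_1,\ldots,x_M$ and the integral factors as $b^{-M}\prod_{i=1}^M\bigl(\sum_{c=0}^{b-1}\e^{\frac{2\pi\im}{b}\gamma_ic}\bigr)$, in which the factor for $i=M$ is a geometric sum over a nontrivial $b$-th root of unity and hence vanishes. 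Structurally this is just orthogonality of the characters of the compact group $[0,1)$ under carry-free $b$-adic addition, but the computation above is self-contained.

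For completeness I would show that the closed linear span of $\{\wal_\alpha:\alpha\in\N_0\}$ is all of $L_2([0,1))$. Fix $n$ and let $V_n$ be the $b^n$-dimensional subspace of functions constant on each $b$-adic interval $I_{n,m}$, $m\in\D_n$ (spanned by the $\chi_{I_{n,m}}$). If $0\leq\alpha<b^n$ then the expansion of $\alpha$ uses only digit positions $1,\ldots,n$, so $\wal_\alpha$ depends only on $x_1,\ldots,x_n$ and lies in $V_n$; the $b^n$ such functions are linearly independent by the orthonormality just proved, so a dimension count gives $\spn\{\wal_\alpha:0\leq\alpha<b^n\}=V_n$. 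Since $\bigcup_{n\geq0}V_n$ is the space of all $b$-adic step functions, which is dense in $L_2([0,1))$ (any $f\in L_2$ is the $L_2$-limit of its averages over the level-$n$ $b$-adic intervals), the Walsh system is complete; combined with orthonormality it is an orthonormal basis of $L_2([0,1))$, and tensorizing as in the first paragraph yields the $d$-dimensional claim.

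I do not expect a genuinely hard step here. The one thing to watch is the non-uniqueness of $b$-adic expansions at $b$-adic rationals, which concerns only a null set and hence is irrelevant for the $L_2$ inner product (and with a fixed finite-expansion convention the identity $\wal_\alpha\overline{\wal_{\alpha'}}=\wal_\gamma$ even holds pointwise); the digit bookkeeping in the orthonormality computation and the dimension count for $V_n$ are the most care-demanding parts, and both are routine.
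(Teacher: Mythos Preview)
Your proof is correct and self-contained. The paper, however, does not prove this lemma at all: it simply quotes the result as \cite[Theorem A.11]{DP10} and moves on. So there is no ``paper's own proof'' to compare against; you have supplied what the paper outsources to a reference. Your argument---reduction to $d=1$ via the Hilbert tensor product, orthonormality from the character identity $\wal_\alpha\overline{\wal_{\alpha'}}=\wal_\gamma$ and the vanishing geometric sum, completeness by the dimension count $\spn\{\wal_\alpha:0\le\alpha<b^n\}=V_n$ together with density of $b$-adic step functions---is the standard route and matches what one finds in \cite{DP10}.
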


\section{Digital $(v,n,d)$-nets}
Digital nets go back to Niederreiter \cite{N87}. We also refer to \cite{NP01} and \cite{DP10}. Here we quote the more general order $\sigma$ digital nets first introduced in \cite{D07} and \cite{D08}, see also \cite{DP14a}, \cite{DP14b} and \cite{D14}. In the case where $\sigma = 1$ Niederreiter's original definition is obtained.

We quote from \cite[Definitions 4.1, 4.3]{D08} to describe the digital construction method and properties of the resulting digital nets.

For a prime number $b$ let $\F_b$ denote the finite field of order $b$ identified with the set $\{0,1,\ldots,b-1\}$ equipped with arithmetic operations modulo $b$. For $s,n\in\N$ with $s\geq n$ let $C_1,\ldots,C_d$ be $s\times n$ matrices with entries from $\F_b$. For $\nu\in\{0,1,\ldots,b^n-1\}$ with the $b$-adic expansion $\nu = \nu_0 + \nu_1 b + \ldots + \nu_{n-1} b^{n-1}$ with digits $\nu_0,\nu_1,\ldots,\nu_{n-1}\in\{0,1,\ldots,b-1\}$ the $b$-adic digit vector $\bar{\nu}$ is given as $\bar{\nu} = (\nu_0,\nu_1,\ldots,\nu_{n-1})^{\top}\in\F_b^n$. Then we compute $C_i\bar{\nu}=(x_{i,\nu,1},x_{i,\nu,2},\ldots,x_{i,\nu,s})^{\top}\in\F_b^s$ for $1\leq i\leq d$. Finally we define
\[ x_{i,\nu}=x_{i,\nu,1} b^{-1}+x_{i,\nu,2} b^{-2}+\ldots+x_{i,\nu,s} b^{-s} \in[0,1) \]
and $x_{\nu}=(x_{1,\nu},\ldots,x_{d,\nu})$. We call the point set $\P_n^b=\{x_0,x_1,\ldots,x_{b^n-1}\}$ a digital net over $\F_b$.

Now let $\sigma\in\N$ and suppose $s\geq \sigma n$. Let $0\leq v\leq \sigma n$ be an integer. For every $1\leq i\leq d$ we write $C_i=(c_{i,1},\ldots,c_{i,s})^{\top}$ where $c_{i,1},\ldots,c_{i,s}\in\F_b^n$ are the row vectors of $C_i$. If for all $1\leq \lambda_{i,1}<\ldots<\lambda_{i,\eta_i}\leq s,\,1\leq i\leq d$ with
\[ \lambda_{1,1}+\ldots+\lambda_{1,\min(\eta_1,\sigma)}+\ldots+\lambda_{d,1}+\ldots+\lambda_{d,\min(\eta_d,\sigma)}\leq\sigma n - v \]
the vectors $c_{1,\lambda_{1,1}},\ldots,c_{1,\lambda_{1,\eta_1}},\ldots,c_{d,\lambda_{d,1}},\ldots,c_{d,\lambda_{d,\eta_d}}$ are linearly independet over $\F_b$, then $\P_n^b$ is called an order $\sigma$ digital $(v,n,d)$-net over $\F_b$.

The following result is \cite[Theorem 3.3]{D07}.

\begin{lem}\label{lem_digital_nets_sigma_v}
\mbox{}
\begin{enumerate}[(i)]
 \item Let $v<\sigma n$. Then every order $\sigma$ digital $(v,n,d)$-net over $\F_b$ is an order $\sigma$ digital $(v+1,n,d)$-net over $\F_b$. In particular every point set $\P_n^b$ constructed with the digital method is at least an order $\sigma$ digital $(\sigma n,n,d)$-net over $\F_b$.
 \item Let $1\leq \sigma_1\leq\sigma_2$. Then every order $\sigma_2$ digital $(v,n,d)$-net over $\F_b$ is an oder $\sigma_1$ digital $(\lceil v\sigma_1/\sigma_2 \rceil,n,d)$-net over $\F_b$.
\end{enumerate}
\end{lem}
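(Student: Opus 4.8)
The plan is to treat both parts as purely combinatorial statements obtained by unwinding the defining condition of an order $\sigma$ digital $(v,n,d)$-net. Throughout, for fixed generating matrices $C_1,\ldots,C_d$ of size $s\times n$ (with $s\ge\sigma n$) and for an index tuple $\lambda_{i,1}<\ldots<\lambda_{i,\eta_i}\le s$, $1\le i\le d$, write $W_\sigma$ for the weight $\sum_{i=1}^d(\lambda_{i,1}+\ldots+\lambda_{i,\min(\eta_i,\sigma)})$ occurring in the definition, that is, the sum over each coordinate of its $\min(\eta_i,\sigma)$ largest selected indices; the net property then reads: $W_\sigma\le\sigma n-v$ implies that the rows $c_{i,\lambda_{i,j}}$, $1\le j\le\eta_i$, $1\le i\le d$, are linearly independent over $\F_b$.

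For part (i), assume $v<\sigma n$ so that $v+1\le\sigma n$ is still an admissible parameter. Passing from $v$ to $v+1$ only lowers the threshold $\sigma n-v$ to $\sigma n-v-1$, so the family of index tuples that must be checked for a $(v+1,n,d)$-net is contained in the family for a $(v,n,d)$-net; hence the first assertion is immediate. For the supplement, take $v=\sigma n$: the threshold is then $0$, and since a nonempty selection always contributes some index $\ge1$ to $W_\sigma$, only the empty tuple is admissible and there is nothing to check. Thus any generating matrices give an order $\sigma$ digital $(\sigma n,n,d)$-net, and since by the monotonicity just shown the set of admissible $v$ is an upper set, this is exactly the ``at least'' formulation.

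Part (ii) is the substantive one. Let $\P_n^b$ be an order $\sigma_2$ digital $(v,n,d)$-net, so $s\ge\sigma_2n\ge\sigma_1n$, and put $v':=\lceil v\sigma_1/\sigma_2\rceil$; from $v\le\sigma_2n$ one gets $v'\le\sigma_1n$, so $v'$ is an admissible parameter. Given an index tuple with order-$\sigma_1$ weight $W_{\sigma_1}\le\sigma_1n-v'$, I want the rows $c_{i,\lambda_{i,j}}$ to be linearly independent, and for that it suffices to verify that the \emph{same} tuple also passes the order-$\sigma_2$ test, that is, $W_{\sigma_2}\le\sigma_2n-v$, after which the order $\sigma_2$ net property applies verbatim to exactly those rows. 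The point on which everything hinges is the inequality
\[ W_{\sigma_1}\ \ge\ \frac{\sigma_1}{\sigma_2}\,W_{\sigma_2}, \]
valid for every index tuple. To see it, work one coordinate at a time: the contribution of coordinate $i$ to $W_{\sigma_1}$ is the sum of its $k_1:=\min(\eta_i,\sigma_1)$ largest selected indices and the contribution to $W_{\sigma_2}$ the sum of its $k_2:=\min(\eta_i,\sigma_2)\ge k_1$ largest; since the $k_1$ largest entries have average at least that of the $k_2$ largest, the first sum is at least $\frac{k_1}{k_2}$ times the second, and $\frac{k_1}{k_2}\ge\frac{\sigma_1}{\sigma_2}$ by a short case check (separating $\eta_i\le\sigma_1$, $\sigma_1<\eta_i\le\sigma_2$, $\eta_i>\sigma_2$). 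Summing over $i$ gives the displayed bound, whence $W_{\sigma_2}\le\frac{\sigma_2}{\sigma_1}W_{\sigma_1}\le\frac{\sigma_2}{\sigma_1}(\sigma_1n-v')=\sigma_2n-\frac{\sigma_2}{\sigma_1}v'\le\sigma_2n-v$, the last step because $\frac{\sigma_2}{\sigma_1}v'=\frac{\sigma_2}{\sigma_1}\lceil v\sigma_1/\sigma_2\rceil\ge v$; this is precisely the order-$\sigma_2$ admissibility of the tuple, and the proof is complete.

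The main obstacle is really just to isolate and prove the weight comparison $W_{\sigma_1}\ge(\sigma_1/\sigma_2)W_{\sigma_2}$ and then to arrange the rounding in $v'=\lceil v\sigma_1/\sigma_2\rceil$ so that the closing chain of estimates goes through with no slack to spare; the rest is bookkeeping, though one should still watch the degenerate coordinates (those with $\eta_i=0$, contributing nothing, and those with $\eta_i\le\sigma_1$, where the two weights coincide) and the trivial subcase $\sigma_1n-v'\le0$, in which the order-$\sigma_1$ condition is vacuous.
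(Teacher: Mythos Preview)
The paper does not prove this lemma; it simply cites \cite[Theorem 3.3]{D07}. Your argument is a correct self-contained proof, and in fact is essentially the one given in Dick's original paper: part (i) is immediate monotonicity, and part (ii) reduces to the coordinate-wise weight comparison $W_{\sigma_1}\ge(\sigma_1/\sigma_2)W_{\sigma_2}$, which you establish via the averaging observation on the top $k_1\le k_2$ indices together with the case split on $\eta_i$. The chain $W_{\sigma_2}\le(\sigma_2/\sigma_1)W_{\sigma_1}\le \sigma_2 n-(\sigma_2/\sigma_1)\lceil v\sigma_1/\sigma_2\rceil\le\sigma_2 n-v$ is exactly the right way to close, and your remark on the vacuous case $\sigma_1 n-v'\le0$ covers the boundary.

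One point worth flagging: you read $W_\sigma$ as the sum of the $\min(\eta_i,\sigma)$ \emph{largest} selected indices in each coordinate, whereas the paper's displayed formula, with the increasing convention $\lambda_{i,1}<\ldots<\lambda_{i,\eta_i}$, literally sums the \emph{smallest}. Your reading is the correct one --- it is the definition in \cite{D08} (there written with the reverse ordering convention), it is the only reading consistent with the dual weight $\varrho_\sigma$ and with Lemma \ref{varrhogreater}, and with the ``smallest'' reading part (ii) is actually false. So the paper's transcription of the definition contains a typo, and your proof is sound for the intended statement; it would do no harm to say this explicitly when you introduce $W_\sigma$.
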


Considering this we obtain the following geometric property going back to Niederreiter \cite{N87}.

\begin{lem}\label{nets_distribution}
 Let $\P_n^b$ be an order $\sigma$ digital $(v,n,d)$-net over $\F_b$ then every $b$-adic interval of order $n-v$ contains exactly $b^v$ points of $\P_n^b$.
\end{lem}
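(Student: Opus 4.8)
\emph{Plan.} The statement is vacuous unless $n-v\geq 0$, so assume $0\le v\le n$. A $b$-adic interval of order $n-v$ has the form $I_{j,m}$ with $j\in\N_{-1}^d$, $m\in\D_j$ and $|j|_+=n-v$; since a coordinate with $j_i=-1$ imposes no restriction and $\D_{-1}=\D_0=\{0\}$, we may assume $j=(j_1,\dots,j_d)\in\N_0^d$ with $j_1+\dots+j_d=n-v$. The first step is to translate membership in $I_{j,m}$ into a linear condition on the digit vector $\bar\nu$. For $1\le i\le d$ let $C_i^{(j_i)}$ be the $j_i\times n$ matrix formed by the first $j_i$ rows $c_{i,1},\dots,c_{i,j_i}$ of $C_i$ (these exist since $j_i\le n\le s$); then $(x_{i,\nu,1},\dots,x_{i,\nu,j_i})^{\top}=C_i^{(j_i)}\bar\nu$ is exactly the vector of the first $j_i$ base-$b$ digits of $x_{i,\nu}$. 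Since $I_{j_i,m_i}=[b^{-j_i}m_i,b^{-j_i}(m_i+1))$, the point $x_\nu$ lies in $I_{j,m}$ if and only if $C_i^{(j_i)}\bar\nu$ equals the digit vector of $m_i$ for every $i$, i.e. if and only if $A\bar\nu=\beta_m$, where $A$ is the $(n-v)\times n$ matrix over $\F_b$ obtained by stacking $C_1^{(j_1)},\dots,C_d^{(j_d)}$ and $\beta_m\in\F_b^{n-v}$ collects the digit vectors of $m_1,\dots,m_d$. As $m$ runs over $\D_j$, the vector $\beta_m$ runs bijectively over all of $\F_b^{\,j_1}\times\dots\times\F_b^{\,j_d}=\F_b^{\,n-v}$.

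The crucial step is to show that $A$ has full row rank $n-v$, equivalently that the rows $c_{i,t}$ with $1\le i\le d$, $j_i\ge1$ and $1\le t\le j_i$ are linearly independent over $\F_b$. This is where the order $\sigma$ digital $(v,n,d)$-net property enters: apply its defining condition with $\lambda_{i,t}=t$ and $\eta_i=j_i$ for the indices $i$ with $j_i\ge 1$. One must verify the index-sum constraint
\[ \sum_{i:\,j_i\ge1}\big(1+2+\dots+\min(j_i,\sigma)\big)\;\le\;\sigma n-v. \]
For each such $i$ one has $1+2+\dots+\min(j_i,\sigma)=\tfrac12\min(j_i,\sigma)\big(\min(j_i,\sigma)+1\big)\le\sigma j_i$ by an elementary case check (using $\min(j_i,\sigma)\le\sigma$ and $\min(j_i,\sigma)\le j_i$ together with $j_i\ge1$, $\sigma\ge1$); summing over $i$ gives $\sum_i\sigma j_i=\sigma(n-v)=\sigma n-\sigma v\le\sigma n-v$ since $\sigma\ge1$. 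Hence the rows of $A$ are linearly independent and $\rank A=n-v$.

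Finally, a surjective $\F_b$-linear map $\F_b^{\,n}\to\F_b^{\,n-v}$ has every fibre of cardinality $b^{n-(n-v)}=b^v$; since $\nu\mapsto\bar\nu$ is a bijection of $\{0,\dots,b^n-1\}$ onto $\F_b^{\,n}$, the interval $I_{j,m}$ contains exactly $b^v$ points of $\P_n^b$ for every $m\in\D_j$. (This is consistent, as the $b^{\,n-v}$ intervals of a fixed shape $j$ partition the $b^n$ points into classes of size $b^v$.) I expect the only delicate point to be the bookkeeping in the middle step — correctly instantiating the rather involved defining inequality of an order $\sigma$ net and checking the resulting constraint, together with the harmless edge cases $j_i\in\{-1,0\}$. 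As an alternative one could first pass to the order $1$ net property via Lemma~\ref{lem_digital_nets_sigma_v}(ii) and then subdivide each order-$(n-v)$ interval into finer intervals, each of which carries exactly $b^{\lceil v/\sigma\rceil}$ points; but the direct linear-algebra argument above seems shortest.
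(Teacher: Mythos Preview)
Your argument is correct. The reduction to $j\in\N_0^d$, the translation of $x_\nu\in I_{j,m}$ into the linear system $A\bar\nu=\beta_m$, and the fibre-counting conclusion are all standard and sound. The only place that deserves care is the verification of the index-sum constraint in the order~$\sigma$ definition, and your case analysis of $\tfrac12\min(j_i,\sigma)(\min(j_i,\sigma)+1)\le\sigma j_i$ is fine; summing gives $\sigma(n-v)\le\sigma n-v$, so the required linear independence follows and $A$ has full row rank.

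The paper itself does not prove this lemma: it records it as a classical fact due to Niederreiter, prefacing it with ``Considering this'' in reference to Lemma~\ref{lem_digital_nets_sigma_v}. The intended route is therefore precisely the alternative you mention at the end --- pass from an order~$\sigma$ digital $(v,n,d)$-net to an order~$1$ digital $(\lceil v/\sigma\rceil,n,d)$-net via Lemma~\ref{lem_digital_nets_sigma_v}(ii), invoke the classical order~$1$ equidistribution statement (every $b$-adic interval of order $n-\lceil v/\sigma\rceil$ contains exactly $b^{\lceil v/\sigma\rceil}$ points), and then partition an interval of order $n-v$ into $b^{\,v-\lceil v/\sigma\rceil}$ subintervals of order $n-\lceil v/\sigma\rceil$. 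Your direct linear-algebra proof avoids this reduction and gives a self-contained argument valid for all $\sigma$ at once; the paper's route is shorter only because it outsources the order~$1$ case to the literature.
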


Let $t\in\N_0$ with $b$-adic expansion $t= \tau_0+\tau_1 b+\tau_2 b^2+\ldots$. We denote $\vec{0} = (0,\ldots,0)\in\F_b^n$. We put $\bar{t}=(\tau_0,\tau_1,\ldots,\tau_{s-1})^{\top}\in\F_b^s$ and define
\begin{align*}
 \Dn(\Cn)=\left\{t=(t_1,\ldots,t_d)\in\N_0^d:\,C_1^{\top}\bar{t}_1+\ldots+C_d^{\top}\bar{t}_d = \vec{0}\in\F_b^n\right\}.
\end{align*}

The following important fact is \cite[Remark 1]{D07}.

\begin{lem}\label{varrhogreater}
 $\P_n^b$ is an order $\sigma$ digital $(v,n,d)$-net over $\F_b$ if and only if $\varrho_\sigma(t)>\sigma n-v$ for all $t\in\Dn(\Cn)\setminus\{\vec{0}\}$.
\end{lem}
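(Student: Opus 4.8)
The plan is to reduce both sides of the claimed equivalence to a common statement about $\F_b$-linear dependencies among the rows $c_{i,j}$ of the generating matrices, and then to compare the two. The key ingredient is a dictionary between $\Dn(\Cn)$ and such dependencies. For $t=(t_1,\dots,t_d)\in\N_0^d$ with all $t_i<b^s$, I would write the $b$-adic expansion $t_i=\beta^{(i)}_{a_{i,1}-1}b^{a_{i,1}-1}+\dots+\beta^{(i)}_{a_{i,\eta_i}-1}b^{a_{i,\eta_i}-1}$ with $0<a_{i,1}<\dots<a_{i,\eta_i}\le s$ and digits in $\{1,\dots,b-1\}$, and, using $C_i=(c_{i,1},\dots,c_{i,s})^{\top}$, record the identity
\[ C_i^{\top}\bar{t}_i=\sum_{j=1}^{s}\tau^{(i)}_{j-1}\,c_{i,j}=\sum_{k=1}^{\eta_i}\beta^{(i)}_{a_{i,k}-1}\,c_{i,a_{i,k}}. \]

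Consequently the defining relation $\sum_i C_i^{\top}\bar{t}_i=\vec{0}$ of $\Dn(\Cn)$ says precisely that the rows $c_{i,a_{i,k}}$ ($1\le i\le d$, $1\le k\le\eta_i$) satisfy a linear relation with the coefficients $\beta^{(i)}_{a_{i,k}-1}$. Thus a nonzero $t\in\Dn(\Cn)$ with all $t_i<b^s$ is the same datum as a family of index sets $\Lambda_i=\{a_{i,1},\dots,a_{i,\eta_i}\}\subseteq\{1,\dots,s\}$, not all empty, together with an $\F_b$-linear dependence among the rows $\{c_{i,a}:1\le i\le d,\ a\in\Lambda_i\}$ in which every coefficient is nonzero; and $\varrho_\sigma(t)=\sum_i\varrho_\sigma(t_i)$ is, by inspection of the definition of the weight, exactly the quantity that the order $\sigma$ digital net condition requires to exceed $\sigma n-v$ for the selection $(\Lambda_1,\dots,\Lambda_d)$. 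I would dispose of the tuples with some $t_i\ge b^s$ by the trivial remark that then $\varrho_\sigma(t)\ge s+1\ge\sigma n+1>\sigma n-v$ (using $s\ge\sigma n$ and $v\ge 0$), so they never affect the right-hand condition. Finally I would record the one monotonicity fact needed below: since all digit positions are positive integers, if the support of $t'$ is contained in the support of $t$, then $\varrho_\sigma(t')\le\varrho_\sigma(t)$.

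With this in hand the two implications are short. For "net $\Rightarrow$ weight condition" I would take $t\in\Dn(\Cn)\setminus\{\vec{0}\}$ (without loss of generality all $t_i<b^s$), pass to the associated linearly \emph{dependent} row selection of weight $\varrho_\sigma(t)$, and invoke the net property in contrapositive form---every linearly dependent selection has weight $>\sigma n-v$---to get $\varrho_\sigma(t)>\sigma n-v$. For the converse I would assume $\P_n^b$ is not an order $\sigma$ digital $(v,n,d)$-net, fix a linearly dependent row selection $(\Lambda_1,\dots,\Lambda_d)$ of weight $\le\sigma n-v$, pass to the sub-selection $\Lambda_i'\subseteq\Lambda_i$ of positions carrying nonzero coefficients in some nontrivial dependence among the selected rows, and read off from those coefficients an element $t\in\Dn(\Cn)\setminus\{\vec{0}\}$ whose nonzero $b$-adic digits occupy the positions $\Lambda_i'$. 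The monotonicity fact then yields $\varrho_\sigma(t)=\mathrm{weight}(\Lambda')\le\mathrm{weight}(\Lambda)\le\sigma n-v$, contradicting the right-hand side.

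I expect the only step needing genuine care, beyond unwinding definitions and tracking $b$-adic digits, to be the reduction in the converse direction: the net definition quantifies over all row selections, whereas elements of $\Dn(\Cn)$ correspond only to the "tight" ones---those in which every coefficient of the dependence is nonzero---and bridging this gap is exactly where the subset-monotonicity of $\varrho_\sigma$ is used. The remaining things to watch are purely bookkeeping: that the quantity in the net condition matches $\sum_i\varrho_\sigma$ for the selected positions, and that digit positions beyond $s$ only enlarge $\varrho_\sigma$ and hence are harmless.
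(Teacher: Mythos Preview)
The paper does not give its own proof of this lemma; it is cited as \cite[Remark~1]{D07}. Your argument is the standard one found there: identify nonzero $t\in\Dn(\Cn)$ with linear dependencies among the rows $c_{i,j}$ in which every coefficient is nonzero, and use subset monotonicity of the order-$\sigma$ weight to pass between arbitrary dependent row selections and these ``tight'' ones. So there is nothing to compare against, and your outline is correct.

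One point deserves more care than you give it. You assert that $\varrho_\sigma(t)$ is ``exactly the quantity that the order $\sigma$ digital net condition requires to exceed $\sigma n-v$ for the selection $(\Lambda_1,\dots,\Lambda_d)$''. But as the net definition is literally transcribed in this paper, the weight of a selection $\lambda_{i,1}<\dots<\lambda_{i,\eta_i}$ is $\sum_i(\lambda_{i,1}+\dots+\lambda_{i,\min(\eta_i,\sigma)})$, i.e.\ the sum of the \emph{smallest} $\min(\eta_i,\sigma)$ indices in each coordinate, whereas $\varrho_\sigma(t_i)$ sums the \emph{largest} $\min(\eta_i,\sigma)$ nonzero digit positions. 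Under that literal reading your converse step ``$\varrho_\sigma(t)=\mathrm{weight}(\Lambda')\le\mathrm{weight}(\Lambda)$'' fails, and in fact the stated equivalence is false (take $d=1$, $b=\sigma=2$, $n=3$, $v=0$, $s=6$, rows $c_1,c_2,c_3$ the standard basis of $\F_2^3$ and $c_4=c_2+c_3$, $c_5=c_1+c_3$, $c_6=c_1+c_2$; every nonzero $t\in\Dn(\Cn)$ has $\varrho_2(t)\ge 7$, yet the selection $\{1,2,6\}$ has smallest-two-sum $3\le 6$ and is dependent). The definition that makes the lemma true---and the one in the cited source---sums the largest indices; under it your proof goes through verbatim. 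Since you yourself flag the matching of the two weights as a bookkeeping item to watch, you should note this discrepancy explicitly rather than assert equality.
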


The following result is \cite[Lemma 2]{DP05}.

\begin{lem}\label{walsh_character}
Let $\P_n^b$ be an order $\sigma$ digital $(v,n,d)$-net over $\F_b$ with generating matrices $C_1,\ldots,C_d$. Then
 \[ \sum_{z\in\P_n^b} \wal_t(z) = \begin{cases} b^n & \text{if } t\in\Dn(\Cn),\\ 0 & \text{otherwise}.\end{cases} \]
\end{lem}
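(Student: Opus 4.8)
The plan is to evaluate $\wal_t$ at an arbitrary point of the net by unwinding the definitions of the Walsh functions and of the digital construction, thereby reducing the sum to an elementary character sum on $\F_b^n$.

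First I would fix $t=(t_1,\dots,t_d)\in\N_0^d$ and an index $\nu\in\{0,\dots,b^n-1\}$ with $b$-adic digit vector $\bar\nu\in\F_b^n$. Writing the $b$-adic expansion $t_i=\tau_{i,0}+\tau_{i,1}b+\tau_{i,2}b^2+\dots$, the definition of $\wal_{t_i}$ gives $\wal_{t_i}(x_{i,\nu})=\e^{\frac{2\pi\im}{b}\sum_{l\geq1}\tau_{i,l-1}x_{i,\nu,l}}$. Since the digital construction only produces the digits $x_{i,\nu,1},\dots,x_{i,\nu,s}$ of $x_{i,\nu}$ (all later digits being $0$), the exponent reduces to $\sum_{l=1}^s\tau_{i,l-1}x_{i,\nu,l}=\bar t_i^{\top}(C_i\bar\nu)=(C_i^{\top}\bar t_i)^{\top}\bar\nu$, where all the arithmetic may be read in $\F_b$ because only its value modulo $b$ enters the exponential. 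Multiplying over $i=1,\dots,d$ and using $\wal_t(x_\nu)=\prod_{i=1}^d\wal_{t_i}(x_{i,\nu})$ yields
\[ \wal_t(x_\nu)=\e^{\frac{2\pi\im}{b}\,w^{\top}\bar\nu},\qquad w:=\sum_{i=1}^d C_i^{\top}\bar t_i\in\F_b^n. \]

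Next I would sum over the net. As $\nu$ runs through $\{0,\dots,b^n-1\}$ the digit vector $\bar\nu$ runs through all of $\F_b^n$, so
\[ \sum_{z\in\P_n^b}\wal_t(z)=\sum_{\bar\nu\in\F_b^n}\e^{\frac{2\pi\im}{b}\,w^{\top}\bar\nu}=\prod_{k=1}^n\Bigl(\sum_{c\in\F_b}\e^{\frac{2\pi\im}{b}\,w_k c}\Bigr), \]
and each factor equals $b$ if $w_k=0$ and $0$ otherwise, being in the latter case a complete sum of $b$-th roots of unity. Hence the product is $b^n$ when $w=\vec{0}$ and $0$ otherwise. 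Finally, by the definition of $\Dn(\Cn)$ the condition $w=\sum_{i=1}^d C_i^{\top}\bar t_i=\vec{0}$ is precisely $t\in\Dn(\Cn)$, which gives the two cases in the statement. Note that this argument does not use the net parameters $(\sigma,v)$ at all: it holds for any point set produced by the digital method with generating matrices $C_1,\dots,C_d$.

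The computation is short and I expect no real obstacle; the only thing to handle carefully is the digit-index bookkeeping — pairing the $l$-th $b$-adic digit of $x_{i,\nu}$ with the $(l-1)$-st digit of $t_i$, and keeping the transposition straight so that $C_i$ acting on $\bar\nu$ becomes $C_i^{\top}$ acting on $\bar t_i$ — together with the observation that only the first $s$ digits of each $t_i$ can affect $\wal_{t_i}$ on the net points, which is exactly what makes the truncated vectors $\bar t_i\in\F_b^s$ in the definition of $\Dn(\Cn)$ the right objects.
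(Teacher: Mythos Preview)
Your argument is correct and is exactly the standard character-sum proof: reduce $\wal_t(x_\nu)$ to an additive character of $\F_b^n$ evaluated at $\bar\nu$, then use orthogonality of characters. The paper does not give its own proof of this lemma at all; it simply cites \cite[Lemma~2]{DP05}, whose proof is essentially the computation you wrote out.
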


We consider the Walsh series expansion of the function $\chi_{[0,x)}$,
\begin{align}
\chi_{[0,x)}(y) = \sum_{\eta = 0}^\infty \hat{\chi}_{[0,x)}(\eta) \wal_\eta(y),
\end{align}
where for $\eta \in \N_0$ the $\eta$-th Walsh coefficient is given by
\[ \hat{\chi}_{[0,x)}(\eta) = \int_0^1 \chi_{[0,x)}(y) \overline{\wal_\eta(y)} \dint y = \int_0^x \overline{\wal_\eta(y)} \dint y. \]

\begin{lem}\label{lem_discr_walsh}
Let $\P_n^b$ be an order $\sigma$ digital $(v,n,d)$-net over $\F_b$ with generating matrices $C_1,\ldots,C_d$. Then
\[ D_{\P_n^b}(x) = \sum_{t\in\Dn(\Cn)\setminus\{\vec{0}\}}\hat{\chi}_{[0,x)}(t). \]
\end{lem}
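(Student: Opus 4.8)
The plan is to combine the Walsh expansion of the discrepancy function with the character sum property of digital nets stated in Lemma~\ref{walsh_character}. Recall that for a point set $\P$ with $N = b^n$ points,
\[
D_{\P}(x) = \frac{1}{b^n}\sum_{z\in\P}\chi_{[0,x)}(z) - x_1\cdots x_d.
\]
First I would insert the Walsh series $\chi_{[0,x)}(y) = \sum_{\eta\in\N_0^d}\hat\chi_{[0,x)}(\eta)\wal_\eta(y)$ (as a tensor product over the $d$ coordinates) into the sum $\sum_{z\in\P_n^b}\chi_{[0,x)}(z)$ and interchange the (absolutely convergent, or at worst suitably interpreted) sum over $\eta$ with the finite sum over $z\in\P_n^b$. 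This yields
\[
\frac{1}{b^n}\sum_{z\in\P_n^b}\chi_{[0,x)}(z) = \sum_{\eta\in\N_0^d}\hat\chi_{[0,x)}(\eta)\,\frac{1}{b^n}\sum_{z\in\P_n^b}\wal_\eta(z).
\]

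Next I would apply Lemma~\ref{walsh_character}: the inner average $\frac{1}{b^n}\sum_{z\in\P_n^b}\wal_\eta(z)$ equals $1$ if $\eta\in\Dn(\Cn)$ and $0$ otherwise. Hence the right-hand side collapses to $\sum_{t\in\Dn(\Cn)}\hat\chi_{[0,x)}(t)$. It then remains to identify the $t=\vec 0$ term with the volume term $x_1\cdots x_d$ and cancel it. Indeed $\wal_{\vec 0}\equiv 1$, so $\hat\chi_{[0,x)}(\vec 0) = \int_{[0,1)^d}\chi_{[0,x)}(y)\,\dint y = x_1\cdots x_d$; subtracting the volume term exactly removes the $\vec 0$ summand, leaving
\[
D_{\P_n^b}(x) = \sum_{t\in\Dn(\Cn)\setminus\{\vec 0\}}\hat\chi_{[0,x)}(t),
\]
which is the claim.

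The main technical point — and the step I would be most careful about — is the interchange of the infinite Walsh sum with the point sum, i.e.\ justifying convergence. Since $\chi_{[0,x)}$ is a bounded function of bounded variation in each variable, its one-dimensional Walsh coefficients decay, and one can argue via partial sums: the Walsh partial sums of $\chi_{[0,x_i)}$ over $\eta < b^N$ converge (pointwise off a null set, and boundedly) to $\chi_{[0,x_i)}$, so one may pass to the limit after evaluating the finite Walsh-partial-sum identity at each of the finitely many points $z\in\P_n^b$. A clean alternative is to note that evaluating $\chi_{[0,x)}$ at the fixed point $z$, only coordinates of $z$ that are $b$-adic rationals cause trouble, and by a standard limiting argument (or by perturbing $x$ within a full-measure set) the identity holds for a.e.\ $x$, which suffices since $D_{\P_n^b}$ and the right-hand side are both to be compared in $L_p$ and Besov norms later. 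Everything else is a direct substitution and the application of Lemma~\ref{walsh_character}.
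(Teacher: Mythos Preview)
Your proposal is correct and follows essentially the same route as the paper: expand $\chi_{[0,x)}$ in its Walsh series, swap the finite sum over $z\in\P_n^b$ with the Walsh sum, apply Lemma~\ref{walsh_character} to kill all $\eta\notin\Dn(\Cn)$, and then observe that the $\vec 0$ term equals $x_1\cdots x_d$ and cancels the volume part. The paper performs exactly this computation without commenting on the interchange of sums, so your added remarks on convergence are more careful than the original.
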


\begin{proof}
 For $t = (t_1,\ldots,t_d)\in\N_0^d$ and $x = (x_1,\ldots,x_d)\in[0,1)^d$, we have
\[ \hat{\chi}_{[0,x)}(t) = \hat{\chi}_{[0,x_1)}(t_1) \cdots \hat{\chi}_{[0,x_d)}(t_d). \]
Applying Lemma \ref{walsh_character} we get
\begin{align*}
D_{\P}(x) & = \frac{1}{b^n} \sum_{z \in \P_n^b} \sum_{t_1, \ldots, t_d = 0}^{\infty} \hat{\chi}_{[0,x)}(t) \wal_t(z) - \hat{\chi}_{[0,x)}((0, \ldots, 0)) \\
          & = \sum_{\substack{t_1, \ldots, t_d = 0 \\ (t_1, \ldots t_d) \neq (0, \ldots, 0)}}^{\infty} \hat{\chi}_{[0,x)}(t) \frac{1}{b^n} \sum_{z \in \P} \wal_t(z) \\
          & = \sum_{t \in \Dn(\Cn)\setminus\{\vec{0}\}} \hat{\chi}_{[0,x)}(t).
\end{align*}
\end{proof}

Order $\sigma$ digital $(v,n,d)$-nets can be constructed from order $1$ digital $(w,n,\sigma d)$-nets using a method called digit interlacing (see \cite{DP14b} and \cite{D14} for details and examples). Constructions of order $1$ digital nets are well known. A good quality parameter $v$ that does not depend on $n$ can be obtained.

\section{Proofs of the results}
For two sequences $a_n$ and $b_n$  we will write $a_n\preceq b_n$ if there exists a constant $c>0$ such that $a_n\leq c\,b_n$ for all $n$. For $t > 0$ with $b$-adic expansion $t = \tau_0 + \tau_1 b + \ldots + \tau_{\varrho_1(t) - 1} b^{\varrho_1(t) - 1}$, we put $t = t' + \tau_{\varrho_1(t) - 1} b^{\varrho_1(t) - 1}$.

We start with two easy facts. For the proof of the first one see e.~g. \cite[Proof of Lemma 16.26]{DP10}.
\begin{lem}\label{index_dim_red}
Let $r\in\N_0$ and $s\in\N$. Then
\[ \#\{(a_1,\ldots,a_s)\in\N_0^s:\; a_1 + \ldots + a_s = r\} \leq (r + 1)^{s-1}. \]
\end{lem}

\begin{lem}\label{index_dim_red_log}
 Let $K\in\N$, $A>1$ and $q,s>0$. Then we have
\[ \sum_{r = 0}^{K-1} A^r (K-r)^q r^s \preceq A^K\,K^s, \]
 where the constant is independet of $K$.
\end{lem}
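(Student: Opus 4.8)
The statement to prove is Lemma~\ref{index_dim_red_log}: for $K\in\N$, $A>1$ and $q,s>0$,
\[ \sum_{r=0}^{K-1} A^r (K-r)^q r^s \preceq A^K K^s, \]
with the constant independent of $K$.

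The plan is to reverse the order of summation and exploit the geometric decay of $A^r$. Substituting $j = K - r$ (so $j$ runs from $1$ to $K$) turns the left-hand side into $A^K \sum_{j=1}^{K} A^{-j} j^q (K-j)^s$. Since $(K-j)^s \le K^s$ for $j \ge 0$, it suffices to bound $\sum_{j=1}^{K} A^{-j} j^q$ by a constant depending only on $A$ and $q$. But $\sum_{j=1}^{\infty} A^{-j} j^q < \infty$ because $A > 1$: this is a convergent series (e.g.\ by the ratio test, $\frac{(j+1)^q A^{-(j+1)}}{j^q A^{-j}} = (1+1/j)^q A^{-1} \to A^{-1} < 1$). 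Denoting its value by $c_{A,q}$, we get the bound $A^K K^s c_{A,q}$, which is exactly what is claimed.

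So the key steps, in order, are: (1) reindex by $j = K-r$ and factor out $A^K$; (2) bound $(K-j)^s \le K^s$ and pull $K^s$ out; (3) observe $\sum_{j\ge 1} j^q A^{-j}$ converges to a finite constant $c_{A,q}$ independent of $K$; (4) conclude.

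There is essentially no obstacle here — it is a routine estimate. The only mild point worth stating carefully is that the implicit constant genuinely does not depend on $K$, which is transparent once the sum is recognized as a partial sum of a fixed convergent series. One could alternatively avoid even naming the series and just note $\sum_{j=1}^K j^q A^{-j} \le \sum_{j=0}^\infty j^q A^{-j} =: c_{A,q} < \infty$; the argument is the same either way.
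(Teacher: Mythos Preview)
Your proof is correct and essentially identical to the paper's: the paper bounds $r^s \le K^s$ first and then reindexes via $r \mapsto K-r$ to obtain $A^K K^s \sum_{r=1}^K A^{-r} r^q$, which it bounds by the convergent infinite series. You reindex first and then apply the same bound $(K-j)^s \le K^s$; the two arguments differ only in the order of these two steps.
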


\begin{proof}
We have
 \[ \sum_{r = 0}^{K-1} A^r (K-r)^q r^s \leq A^K\,K^s\sum_{r=0}^{K-1} A^{r-K} (K-r)^q = A^K\,K^s\sum_{r=1}^{K} A^{-r} r^q \preceq A^K\,K^s. \]
\end{proof}

The following result is \cite[Lemma 5.1]{M13b}.
\begin{lem}\label{lem_haar_coeff_vol}
Let $f(x) = x_1\cdot\ldots\cdot x_d$ for $x=(x_1,\ldots,x_d)\in [0,1)^d$. Let $j\in\N_{-1}^d,\,m\in\D_j,l\in\B_j$. Then $|\langle f,h_{j,m,l}\rangle|\preceq b^{-2|j|_+}$.
\end{lem}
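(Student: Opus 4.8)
The plan is to compute the Haar coefficient $\langle f, h_{j,m,l}\rangle$ directly using the tensor-product structure of both $f(x)=x_1\cdots x_d$ and $h_{j,m,l}$, reducing everything to a one-dimensional estimate. Since $f$ and $h_{j,m,l}$ both factor coordinatewise, we have $\langle f,h_{j,m,l}\rangle = \prod_{i=1}^d \langle \mathrm{id}, h_{j_i,m_i,l_i}\rangle$, where $\mathrm{id}(y)=y$ on $[0,1)$. So it suffices to show that for $j\in\N_0$, $m\in\D_j$, $l\in\B_j$ one has $|\langle \mathrm{id}, h_{j,m,l}\rangle| \preceq b^{-2j}$, and that the coordinates with $j_i=-1$ contribute a bounded factor (indeed for $j_i=-1$ the Haar function is $\chi_{[0,1)}$ and $\langle\mathrm{id},\chi_{[0,1)}\rangle = 1/2$, which matches $b^{-2\cdot(-1)_+}=b^0=1$ up to a constant). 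Multiplying the $d$ one-dimensional bounds gives $b^{-2(j_1{}_+ + \cdots + j_d{}_+)} = b^{-2|j|_+}$, as claimed.

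For the one-dimensional estimate, first I would recall that $h_{j,m,l}$ is supported on the $b$-adic interval $I_{j,m}$ of length $b^{-j}$ and takes the constant value $\e^{\frac{2\pi\im}{b}lk}$ on the subinterval $I_{j,m}^k$ of length $b^{-j-1}$, for $k=0,\dots,b-1$. Hence
\[
\langle \mathrm{id}, h_{j,m,l}\rangle = \sum_{k=0}^{b-1} \e^{-\frac{2\pi\im}{b}lk} \int_{I_{j,m}^k} y\,\dint y.
\]
Writing the midpoint of $I_{j,m}^k$ as $c_k = b^{-j}m + b^{-j-1}(k+\tfrac12)$, the integral $\int_{I_{j,m}^k} y\,\dint y$ equals $b^{-j-1} c_k$ (the linear function integrates to length times the value at the midpoint, exactly). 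Thus the coefficient is $b^{-j-1}\sum_{k=0}^{b-1} \e^{-\frac{2\pi\im}{b}lk} c_k$. Now split $c_k = (b^{-j}m + \tfrac12 b^{-j-1}) + b^{-j-1}k$. Since $l\in\{1,\dots,b-1\}$, the character sum $\sum_{k=0}^{b-1}\e^{-\frac{2\pi\im}{b}lk}$ vanishes, so the constant part drops out and we are left with $b^{-j-1}\cdot b^{-j-1}\sum_{k=0}^{b-1} k\,\e^{-\frac{2\pi\im}{b}lk}$. The remaining sum $\sum_{k=0}^{b-1} k\,\e^{-\frac{2\pi\im}{b}lk}$ is a fixed quantity depending only on $b$ and $l$, bounded in absolute value by $b^2$ (crudely), hence by a constant $c_b$. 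Therefore $|\langle\mathrm{id},h_{j,m,l}\rangle| \le c_b\, b^{-2j-2} \preceq b^{-2j}$, with an implied constant depending only on $b$.

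I do not expect a genuine obstacle here; the only point requiring a little care is the cancellation of the leading (constant-on-$I_{j,m}$) term via the vanishing character sum $\sum_{k=0}^{b-1}\e^{\frac{2\pi\im}{b}lk}=0$ for $l\not\equiv 0\pmod b$, which is precisely what upgrades the trivial bound $b^{-j}$ (from just $\|\mathrm{id}\|_\infty$ times the measure of the support) to the required $b^{-2j}$. This is the analogue, in the Haar setting, of the one vanishing moment of a Haar function, and the factor $b^{-2j}$ rather than $b^{-j}$ is what ultimately produces the summability needed in the later discrepancy estimates. Assembling the tensor product and collecting the coordinates with $j_i=-1$ (where the bound is simply a constant) completes the argument.
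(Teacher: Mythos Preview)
Your proof is correct. The paper does not give its own proof of this lemma but merely cites \cite[Lemma~5.1]{M13b}; your direct computation via the tensor-product factorization and the one-dimensional cancellation from the vanishing character sum $\sum_{k=0}^{b-1}\e^{-\frac{2\pi\im}{b}lk}=0$ for $l\in\{1,\dots,b-1\}$ is exactly the standard argument.
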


The following result is \cite[Lemma 5.2]{M13b}.
\begin{lem} \label{lem_haar_coeff_number}
Let $z = (z_1,\ldots,z_d) \in [0,1)^d$ and $g(x) = \chi_{[0,x)}(z)$ for $x = (x_1, \ldots, x_d) \in [0,1)^d$. Let $j\in\N_{-1}^d,\,m\in\D_j,l\in\B_j$. Then $\langle g,h_{j,m,l}\rangle = 0$ if $z$ is not contained in the interior of the $b$-adic interval $I_{j,m}$. If $z$ is contained in the interior of $I_{j,m}$ then $|\langle g,h_{j,m,l}\rangle|\preceq b^{-|j|_+}$.
\end{lem}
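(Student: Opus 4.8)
The plan is to exploit the tensor--product structure of both $g$ and the Haar function $h_{j,m,l}$ and reduce everything to a one--dimensional computation. As a function of $x=(x_1,\dots,x_d)\in[0,1)^d$ we have $g(x)=\chi_{[0,x)}(z)=\prod_{i=1}^d\chi_{(z_i,1)}(x_i)$, since $\chi_{[0,x)}(z)=1$ precisely when $z_i<x_i$ for every $i$. Together with $h_{j,m,l}(x)=\prod_{i=1}^d h_{j_i,m_i,l_i}(x_i)$ this gives
\[ \langle g,h_{j,m,l}\rangle=\prod_{i=1}^d\int_0^1\chi_{(z_i,1)}(y)\,\overline{h_{j_i,m_i,l_i}(y)}\,\dint y. \]
So it suffices to show that each one--dimensional factor $\int_0^1\chi_{(\zeta,1)}(y)\,\overline{h_{j',m',l'}(y)}\,\dint y$, with $\zeta\in[0,1)$, vanishes whenever $\zeta$ does not lie in the interior of $I_{j',m'}$ and has modulus at most $b^{-j'_+}$ in general.

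For this one--dimensional factor I would distinguish the value of $j'$. If $j'=-1$, then $h_{-1,0,1}=\chi_{[0,1)}$ and the factor equals $1-\zeta\in[0,1]$, of modulus at most $1=b^{-j'_+}$. If $j'\geq 0$, then $h_{j',m',l'}$ is supported on $I_{j',m'}$, so the factor equals $\int_{I_{j',m'}}\chi_{(\zeta,1)}(y)\,\overline{h_{j',m',l'}(y)}\,\dint y$. If $\zeta$ lies to the right of $I_{j',m'}$ (including its right endpoint), the integrand is identically zero. If $\zeta$ lies to the left of $I_{j',m'}$ (including its left endpoint), then $\chi_{(\zeta,1)}\equiv 1$ on $I_{j',m'}$ up to a single point, and the factor equals $\int_{I_{j',m'}}\overline{h_{j',m',l'}(y)}\,\dint y=b^{-j'-1}\sum_{k=0}^{b-1}\e^{-\frac{2\pi\im}{b}l'k}=0$, because $l'\in\{1,\dots,b-1\}$ makes this a geometric sum over a full period. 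Finally, if $\zeta$ lies in the interior of $I_{j',m'}$, then $|h_{j',m',l'}|\equiv 1$ there yields the crude bound $b^{-j'}=b^{-j'_+}$ for the modulus.

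Combining the factors finishes the proof: if $z$ is not contained in the interior of $I_{j,m}$, then for at least one coordinate $i$ the point $z_i$ fails to lie in the interior of $I_{j_i,m_i}$, so the corresponding factor --- and hence the whole product --- vanishes; if $z$ is in the interior, multiplying the $d$ bounds gives $|\langle g,h_{j,m,l}\rangle|\leq\prod_{i=1}^d b^{-(j_i)_+}=b^{-|j|_+}$. I do not expect a serious obstacle here: the only points requiring a little care are the vanishing of the character sum $\sum_{k=0}^{b-1}\e^{-\frac{2\pi\im}{b}l'k}$ for $l'\not\equiv 0\pmod b$, and the bookkeeping around the level $-1$ intervals and the endpoints of the $b$-adic intervals, where ``interior'' must be read in the natural way (the whole interval $[0,1)$ in each level $-1$ coordinate, and single endpoints being measure--zero and hence harmless for the integrals).
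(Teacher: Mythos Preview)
The paper does not actually prove this lemma; it is simply quoted as \cite[Lemma~5.2]{M13b}. Your argument is correct and is precisely the standard tensor--product reduction one expects here: factor the inner product coordinatewise, observe that for $j'\ge 0$ the one--dimensional Haar function has mean zero over its support (so the factor vanishes whenever $\zeta$ is not in the open interval $I_{j',m'}$), and bound by $|I_{j',m'}|=b^{-j'}$ otherwise.

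Your closing caveat is also well placed: the level $-1$ factor equals $1-z_i$ and never vanishes for $z_i\in[0,1)$, so the vanishing statement must be read with the convention that in coordinates with $j_i=-1$ the ``interior'' is all of $[0,1)$. This is harmless for the applications in the paper, which only use the lemma to count how many $b$-adic boxes $I_{j,m}$ contain a given point, and in any case your bound $|\langle g,h_{j,m,l}\rangle|\le b^{-|j|_+}$ holds unconditionally.
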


The following result is \cite[Lemma 5.9]{M13b}.
\begin{lem} \label{lem_haar_walsh_scalar}
Let $j\in\N_{-1}^d,\,m\in\D_j,\,l\in\B_j$ and $\alpha\in\N_0^d$. Then
\[ |\langle h_{j,m,l},\wal_\alpha\rangle| \preceq b^{-|j|_+}. \]
If $\varrho_1(\alpha_i) \neq j_i+1$ for some $1\leq i\leq d$ then
\[ \langle h_{j,m,l},\wal_\alpha\rangle = 0. \]
\end{lem}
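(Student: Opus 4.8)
The plan is to reduce everything to the one-dimensional case and then carry out a short case analysis according to how $\varrho_1(\alpha_i)$ compares with $j_i+1$. Since both families are tensor products, $\langle h_{j,m,l},\wal_\alpha\rangle = \prod_{i=1}^d\langle h_{j_i,m_i,l_i},\wal_{\alpha_i}\rangle$, so it suffices to show, for $j\in\N_{-1}$, $m\in\D_j$, $l\in\B_j$ and $\alpha\in\N_0$, that $|\langle h_{j,m,l},\wal_\alpha\rangle|\leq b^{-j_+}$ and that this scalar product vanishes whenever $\varrho_1(\alpha)\neq j+1$. Multiplying the $d$ one-dimensional bounds gives $b^{-\sum_i (j_i)_+}=b^{-|j|_+}$, and a single vanishing factor kills the whole product, so the multivariate statements follow.

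The bound is immediate: $h_{j,m,l}$ is supported on $I_{j,m}$, which has length $b^{-j_+}$ (length $1$ when $j=-1$), while $|h_{j,m,l}|\leq 1$ and $|\wal_\alpha|\equiv 1$ pointwise; hence $|\langle h_{j,m,l},\wal_\alpha\rangle|\leq |I_{j,m}|=b^{-j_+}$.

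For the vanishing I would distinguish three (sub)cases. If $j=-1$, then $\varrho_1(\alpha)\neq 0$ means $\alpha\neq 0$, and $\langle h_{-1,0,1},\wal_\alpha\rangle=\int_0^1\overline{\wal_\alpha(y)}\,\dint y=0$ by the orthonormality of the Walsh system (Lemma~\ref{Walsh_basis}), i.e.\ the mean-zero property of $\wal_\alpha$ for $\alpha\neq 0$. Now let $j\geq 0$. If $\varrho_1(\alpha)\leq j$, then $\wal_\alpha$ is constant on the $b$-adic interval $I_{j,m}$ (it is already constant on $b$-adic intervals of level $\varrho_1(\alpha)\leq j$), so $\langle h_{j,m,l},\wal_\alpha\rangle$ is a scalar multiple of
\[ \int_{I_{j,m}}h_{j,m,l}(y)\,\dint y = b^{-j-1}\sum_{k=0}^{b-1}\e^{\frac{2\pi\im}{b}lk}=0, \]
the geometric sum vanishing because $l\in\{1,\dots,b-1\}$. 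If instead $\varrho_1(\alpha)\geq j+2$, split $I_{j,m}$ into the $b$ intervals $I_{j,m}^k=I_{j+1,bm+k}$, on each of which $h_{j,m,l}$ equals the constant $\e^{\frac{2\pi\im}{b}lk}$, giving
\[ \langle h_{j,m,l},\wal_\alpha\rangle = \sum_{k=0}^{b-1}\e^{\frac{2\pi\im}{b}lk}\int_{I_{j+1,bm+k}}\overline{\wal_\alpha(y)}\,\dint y, \]
and it remains to see that each of these integrals is $0$. Writing $\alpha=\beta_{a_1-1}b^{a_1-1}+\dots+\beta_{a_\nu-1}b^{a_\nu-1}$ with $a_\nu=\varrho_1(\alpha)\geq j+2$, on the interval $I_{j+1,bm+k}$ the $b$-adic digits $y_1,\dots,y_{j+1}$ are frozen while $y_{j+2},y_{j+3},\dots$ range freely over $\{0,\dots,b-1\}$; since $\overline{\wal_\alpha(y)}$ is a product of one-digit characters in $y_{a_1},\dots,y_{a_\nu}$ and the index $a_\nu$ exceeds $j+1$, the integral factorizes, and the factor $\tfrac1b\sum_{c=0}^{b-1}\e^{-\frac{2\pi\im}{b}\beta_{a_\nu-1}c}=0$ (again because $\beta_{a_\nu-1}\neq 0$) forces the whole integral to vanish.

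Assembling the three cases yields the one-dimensional statement, hence the lemma. The only mildly delicate point is the $b$-adic digit bookkeeping in the last subcase — one must check that $\varrho_1(\alpha)\geq j+2$ genuinely leaves a free digit on the level-$(j+1)$ subintervals — but there is no real obstacle: everything reduces to orthogonality of Walsh functions and to the geometric-sum identity $\sum_{k=0}^{b-1}\e^{\frac{2\pi\im}{b}lk}=0$ for $l\not\equiv 0\pmod b$.
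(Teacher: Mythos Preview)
Your argument is correct. The reduction to dimension one via the tensor-product structure of both families is the natural move, the trivial $L_\infty$--support bound gives $|\langle h_{j,m,l},\wal_\alpha\rangle|\le b^{-j_+}$ with constant $1$, and your three-case split for the vanishing is clean: mean zero of nontrivial Walsh functions handles $j=-1$, constancy of $\wal_\alpha$ on $I_{j,m}$ together with $\sum_{k=0}^{b-1}\e^{2\pi\im lk/b}=0$ handles $\varrho_1(\alpha)\le j$, and the ``free digit'' at level $a_\nu\ge j+2$ kills each $\int_{I_{j+1,bm+k}}\overline{\wal_\alpha}$ in the remaining case.

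As for comparison with the paper: there is nothing to compare against, since the paper does not prove this lemma but merely imports it from \cite[Lemma~5.9]{M13b}. Your write-up therefore supplies what the present paper omits; the argument in \cite{M13b} proceeds along the same elementary lines (tensor reduction plus the one-dimensional case analysis), so you have essentially reconstructed the cited proof.
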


The following result is \cite[Lemma 5.10]{M13b}.
\begin{lem} \label{lem_scalar_chiwalsh_walsh}
Let $t,\alpha\in\N_0$. Then
\[ |\langle\hat{\chi}_{[0,\cdot)}(t),\wal_\alpha\rangle|\preceq b^{-\max(\varrho_1(t),\varrho_1(\alpha))}. \]
If $\alpha\neq t'$ and $\alpha\neq t$ and $\alpha'\neq t$ then
\[ \langle\hat{\chi}_{[0,\cdot)}(t),\wal_\alpha\rangle=0. \]
\end{lem}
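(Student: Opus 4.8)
The plan is to determine the function $x\mapsto\hat{\chi}_{[0,x)}(t)$ explicitly and read both assertions off it. If $t=0$ then $\hat{\chi}_{[0,x)}(0)=x$; this case is covered by the sup-norm bound in the third paragraph and by the fact that the Walsh coefficients of the identity are supported on $0$ and on monomials $\eta b^{c-1}$ (a byproduct of the single-digit computation below). So assume $t\neq 0$, put $a=\varrho_1(t)$ and $t=t'+\tau b^{a-1}$ with $\tau\in\{1,\dots,b-1\}$. Since $\wal_t=\wal_{t'}\wal_{\tau b^{a-1}}$, the factor $\wal_{t'}$ is constant on $b$-adic intervals of level $a-1$, while $\wal_{\tau b^{a-1}}(y)=\e^{\frac{2\pi\im}{b}\tau y_a}$ depends only on the $a$-th digit $y_a$, so the $b$ values it takes on a level-$(a-1)$ interval sum to zero. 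Hence $\int_I\overline{\wal_t(y)}\,\dint y=0$ over every level-$(a-1)$ interval $I$, which means that $\hat{\chi}_{[0,\cdot)}(t)$ is piecewise linear with breakpoints only at multiples of $b^{-a}$ and vanishes at every multiple of $b^{-(a-1)}$. A substitution on a single level-$(a-1)$ interval then gives the clean identity
\[ \hat{\chi}_{[0,x)}(t)=b^{-(a-1)}\,\overline{\wal_{t'}(x)}\,\hat{\chi}_{[0,\{b^{a-1}x\})}(\tau),\qquad x\in[0,1), \]
where $\{\cdot\}$ is the fractional part and $\hat{\chi}_{[0,\cdot)}(\tau)$ is a fixed piecewise-linear function on $[0,1)$ vanishing at $0$ and $1$.

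Next I would insert this identity into $\langle\hat{\chi}_{[0,\cdot)}(t),\wal_\alpha\rangle$, split $[0,1)$ into its $b^{a-1}$ level-$(a-1)$ intervals, and write $\alpha=\alpha^{\mathrm{low}}+b^{a-1}\tilde\alpha$ with $0\le\alpha^{\mathrm{low}}<b^{a-1}$. On each such interval $\overline{\wal_{t'}}\,\overline{\wal_{\alpha^{\mathrm{low}}}}$ is a constant (a product of Walsh functions is a Walsh function) and the remaining integral is, on each interval, the same number $\langle\hat{\chi}_{[0,\cdot)}(\tau),\wal_{\tilde\alpha}\rangle$; summing the constants over the $b^{a-1}$ intervals is a complete character sum over $(\F_b)^{a-1}$, equal to $b^{a-1}$ if the low-order digits of $\alpha$ are the ones uniquely prescribed by $t$ and $0$ otherwise. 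So $\langle\hat{\chi}_{[0,\cdot)}(t),\wal_\alpha\rangle$ vanishes unless the low-order digits of $\alpha$ agree with those of $t$, in which case it equals $b^{-(a-1)}\langle\hat{\chi}_{[0,\cdot)}(\tau),\wal_{\tilde\alpha}\rangle$, and everything reduces to the single-digit case. For $\tau\in\{1,\dots,b-1\}$ the function $\hat{\chi}_{[0,\cdot)}(\tau)$ is affine on each of the $b$ intervals $I_{1,k}$, and peeling off the first digit as before writes $\langle\hat{\chi}_{[0,\cdot)}(\tau),\wal_{\beta}\rangle$ (with $\beta=\beta_0+b\tilde\beta$) as a combination of $\langle 1,\wal_{\tilde\beta}\rangle=\1[\tilde\beta=0]$, which forces $\beta$ to be a single digit, and $\langle u,\wal_{\tilde\beta}\rangle$, the Walsh coefficient of the identity; a short character-sum computation on level-$\varrho_1(\tilde\beta)$ intervals shows the latter vanishes unless $\tilde\beta$ is $0$ or a monomial. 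Tracking both alternatives back through the products leaves exactly the possibilities $\alpha\in\{t',t\}$ and $\alpha'=t$, which is the vanishing assertion.

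For the bound I would combine two soft estimates. First, by the identity above, $|(\hat{\chi}_{[0,\cdot)}(t))'|=|\overline{\wal_t}|=1$ almost everywhere and $\hat{\chi}_{[0,\cdot)}(t)$ vanishes at the multiples of $b^{-(a-1)}$, so $\|\hat{\chi}_{[0,\cdot)}(t)|L_\infty([0,1))\|\le b^{-(a-1)}\preceq b^{-\varrho_1(t)}$ (and trivially $\|x|L_\infty([0,1))\|=1=b^{-\varrho_1(0)}$ if $t=0$); since $|\wal_\alpha|\equiv 1$ this gives $|\langle\hat{\chi}_{[0,\cdot)}(t),\wal_\alpha\rangle|\preceq b^{-\varrho_1(t)}$. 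Second, for $\alpha\neq 0$, Fubini together with $\int_0^1\overline{\wal_\alpha(x)}\,\dint x=0$ gives $\langle\hat{\chi}_{[0,\cdot)}(t),\wal_\alpha\rangle=-\int_0^1\overline{\wal_t(y)}\,\hat{\chi}_{[0,y)}(\alpha)\,\dint y$, hence $|\langle\hat{\chi}_{[0,\cdot)}(t),\wal_\alpha\rangle|\le\|\hat{\chi}_{[0,\cdot)}(\alpha)|L_\infty([0,1))\|\preceq b^{-\varrho_1(\alpha)}$; for $\alpha=0$ the claimed bound $b^{-\varrho_1(0)}=1$ is trivial. Keeping the better of the two estimates yields $\preceq b^{-\max(\varrho_1(t),\varrho_1(\alpha))}$.

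\textbf{The main difficulty.} The bound is essentially immediate once the sup-norm estimate is in hand; the delicate part is the digit bookkeeping behind the vanishing statement — verifying that the complete character sums in the second paragraph single out precisely the three families $\alpha=t'$, $\alpha=t$, $\alpha'=t$, which in particular requires pinning down the Walsh spectrum of the identity function on $[0,1)$ and carefully tracking the passage from the single-digit case back to general $t$.
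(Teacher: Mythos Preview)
The paper does not prove this lemma; it quotes it as \cite[Lemma~5.10]{M13b}. So there is no proof to compare against, and the question is simply whether your argument is correct.

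Your derivation of the bound is fine: the sup-norm estimate $\|\hat\chi_{[0,\cdot)}(t)\|_\infty\preceq b^{-\varrho_1(t)}$ and the Fubini switch $\langle\hat\chi_{[0,\cdot)}(t),\wal_\alpha\rangle=-\langle\hat\chi_{[0,\cdot)}(\alpha),\wal_t\rangle$ (for $\alpha\neq0$) together give the claimed $b^{-\max(\varrho_1(t),\varrho_1(\alpha))}$.

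The vanishing argument, however, does not prove what is stated --- and cannot, because the stated vanishing is false for $b\geq 3$. Your character sum in the second paragraph is $\sum_m\overline{\wal_{t'}(x_m)}\,\overline{\wal_{\alpha^{\mathrm{low}}}(x_m)}=b^{a-1}\int_0^1\overline{\wal_{t'\oplus\alpha^{\mathrm{low}}}}$, which singles out $\alpha^{\mathrm{low}}=\ominus t'$ (the digit-wise negative of $t'$ modulo $b$), not $\alpha^{\mathrm{low}}=t'$. Likewise, in the single-digit analysis the geometric sum $\sum_k\bar\omega^{(\tau+\beta_0)k}$ forces $\beta_0\equiv -\tau\pmod b$, not $\beta_0=\tau$. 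For $b=2$ these coincide and your ``tracking back'' yields exactly $\alpha\in\{t',t\}$ or $\alpha'=t$; for $b\geq3$ they do not. A concrete counterexample: take $b=3$, $t=1$, $\alpha=2$. Then $t'=\alpha'=0$, so $\alpha\neq t'$, $\alpha\neq t$, $\alpha'\neq t$, yet a direct computation gives $\langle\hat\chi_{[0,\cdot)}(1),\wal_2\rangle=i\sqrt{3}/18\neq0$.

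What your argument \emph{does} establish (correctly, for every prime $b$) is the weaker conclusion that the inner product vanishes unless $\varrho_1(\alpha)\leq\varrho_1(t)$ or $\varrho_1(\alpha')=\varrho_1(t)$ (equivalently, by the Fubini symmetry, unless $\varrho_1(t)\leq\varrho_1(\alpha)$ or $\varrho_1(t')=\varrho_1(\alpha)$), with at most $O_b(1)$ admissible $\alpha$ at each level. That is precisely the information the paper actually uses in the proof of Lemma~\ref{prp_ord1}: after fixing $\varrho_1(\alpha_i)=j_i+1$, only the conditions ``$\varrho_1(t_i)\leq j_i+1$ or $\varrho_1(t_i')=j_i+1$'' and the constant-many choices of $\alpha_i$ per $t_i$ enter the argument. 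So your approach is the right one and proves what is needed; it is only the literal statement --- and your final sentence claiming to recover it --- that fails for $b\geq3$.
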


The following result is a modified version of \cite[Lemma 6]{DP14a}.

\begin{lem}\label{card_such_t}
 Let $C_1,\ldots,C_d\in\F_b^{s\times n}$ generate an order $1$ digital $(v,n,d)$-net over $\F_b$. Let $\lambda_1, \ldots, \lambda_d, \gamma_1, \ldots, \gamma_d\in\N_0$. Let $\omega_{\gamma_1,\ldots,\gamma_d}^{\lambda_1,\ldots,\lambda_d}(\Cn)$ denote the cardinality of such $t\in\Dn(\Cn)$ with $\varrho_1(t_i) = \gamma_i$ for all $1\leq i\leq d$ that either $\gamma_i\leq \lambda_i$ or $\varrho_1(t_i')=\lambda_i$. If $\lambda_1,\ldots,\lambda_d\leq s$ then
\[ \omega_{\gamma_1,\ldots,\gamma_d}^{\lambda_1,\ldots,\lambda_d}(\Cn) \leq (b-1)^d\,b^{\left(\min(\lambda_1,\gamma_1-1)+\ldots+\min(\lambda_d,\gamma_d-1)-n+v\right)_+}. \]
\end{lem}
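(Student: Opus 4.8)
The plan is to count the relevant $t = (t_1,\dots,t_d) \in \Dn(\Cn)$ by first fixing the ``high'' digit structure of each $t_i$ and then using the digital net property to bound the number of choices of the remaining digits. For each $i$, the condition $\varrho_1(t_i) = \gamma_i$ means that $t_i$ has its leading nonzero digit in position $\gamma_i$ (i.e.\ $t_i = \tau_{i,0} + \dots + \tau_{i,\gamma_i-1}b^{\gamma_i-1}$ with $\tau_{i,\gamma_i-1} \neq 0$), so there are $b-1$ choices for that leading digit. The two alternatives in the hypothesis ($\gamma_i \le \lambda_i$, or $\varrho_1(t_i') = \lambda_i$) are designed so that in either case, once we remove the leading digit, what is left lives in a restricted range: either $t_i$ itself has all its digits confined to positions $< \lambda_i$ (when $\gamma_i - 1 \le \lambda_i$ holds automatically, or more precisely $\min(\lambda_i,\gamma_i-1) = \gamma_i - 1$ so $t_i'$ is supported on positions $< \gamma_i - 1 \le \lambda_i$), or else $t_i'$ has its own leading nonzero digit exactly in position $\lambda_i$. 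In both cases, $t_i$ is determined by its leading digit (a factor $b-1$), possibly a second ``leading'' digit in position $\lambda_i$ (another bounded factor, absorbed into constants), and $\min(\lambda_i,\gamma_i-1)$ further free digits below that. So heuristically the number of candidate tuples \emph{before} imposing membership in $\Dn(\Cn)$ is of order $(b-1)^d \, b^{\min(\lambda_1,\gamma_1-1)+\dots+\min(\lambda_d,\gamma_d-1)}$.

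Next I would impose the linear constraint defining $\Dn(\Cn)$, namely $C_1^\top \bar t_1 + \dots + C_d^\top \bar t_d = \vec 0 \in \F_b^n$. The key point is that all the relevant digit positions here are at most $\max_i(\gamma_i) \le s$ (using the hypothesis $\lambda_i \le s$ together with $\gamma_i - 1 < \gamma_i$ and the fact that only positions up to roughly $\max(\lambda_i,\gamma_i)$ carry free digits), so the vectors $\bar t_i$ genuinely lie in $\F_b^s$ and the matrices $C_i^\top$ act on them. Grouping the free digits across all coordinates, we get a homogeneous linear system over $\F_b$ in $M := \min(\lambda_1,\gamma_1-1)+\dots+\min(\lambda_d,\gamma_d-1)$ unknowns (after the leading digits, which are fixed or range over $\{1,\dots,b-1\}$, are pinned down), with $n$ linear equations coming from the $n$ rows. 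The order $1$ digital net property with quality parameter $v$ says that any collection of at most $n - v$ of the relevant column vectors $c_{i,\lambda}$ (indexed by positions $\lambda$) is linearly independent; via Lemma \ref{lem_digital_nets_sigma_v} this is exactly what controls the rank of the coefficient matrix of our system. If $M \le n - v$ the system has only the trivial solution among the free digits, forcing the count down to just the contribution of the leading-digit choices, which gives the empty/trivial exponent $(\,\cdot\,)_+ = 0$ regime; if $M > n-v$, the rank of the system is at least $\min(n, M - \text{(something)})$, and the solution space has dimension at most $M - (n-v)$, yielding the factor $b^{(M - n + v)_+}$. Combining with the $(b-1)^d$ from the leading digits gives the claimed bound.

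The main obstacle I expect is the careful bookkeeping of \emph{which} digit positions are ``free'' versus ``fixed'', and making sure that the rank bound from the digital net property is applied to the correct set of column indices. In particular one must handle the two cases in the hypothesis ($\gamma_i \le \lambda_i$ versus $\varrho_1(t_i') = \lambda_i$) uniformly: in the first case the relevant positions for coordinate $i$ are those $< \gamma_i$ with the top one fixed, contributing $\min(\lambda_i,\gamma_i-1) = \gamma_i - 1$ free positions; in the second the relevant positions are those $\le \lambda_i$ with both position $\gamma_i - 1$ (outside the counted block, a bounded factor) and position $\lambda_i$ fixed, again contributing $\min(\lambda_i,\gamma_i - 1) = \lambda_i$ free positions when $\lambda_i < \gamma_i - 1$. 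One then checks that the column vectors indexed by these free positions, being a subset of $\{c_{i,\lambda} : 1 \le \lambda \le s,\ 1\le i \le d\}$ of total size $M$, have rank at least $\min(n-v, M)$ by Lemma \ref{lem_digital_nets_sigma_v}(i) and the definition of the net — this is the step where the hypothesis $\lambda_i \le s$ is essential, since it guarantees we never reference a column index beyond $s$. Everything else (counting leading digits, the elementary linear algebra over $\F_b$, absorbing $b$-power constants into $(b-1)^d$) is routine. The structure follows \cite[Lemma 6]{DP14a} closely, the modification being the refined ``either/or'' hypothesis on $t_i'$, which only affects the identification of the free-position blocks and not the linear-algebra core.
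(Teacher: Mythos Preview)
Your approach is essentially the paper's: fix the leading digits $\tau_{i,\gamma_i-1}$ (giving the factor $(b-1)^d$), treat the remaining $\lambda_i^* := \min(\lambda_i,\gamma_i-1)$ low digits per coordinate as unknowns in the linear system $C_1^\top \bar t_1 + \cdots + C_d^\top \bar t_d = \vec 0$, and use the order~$1$ net property to lower-bound the rank of the resulting coefficient matrix by $\min(n-v,\,\lambda_1^*+\cdots+\lambda_d^*)$. Two small slips to clean up: the rank bound comes directly from the \emph{definition} of an order~$1$ digital $(v,n,d)$-net (linear independence of any $\le n-v$ of the rows $c_{i,\lambda}$), not from Lemma~\ref{lem_digital_nets_sigma_v}; and there is no extra ``second leading digit'' factor to absorb --- the digit $\tau_{i,\lambda_i-1}$ (when $\gamma_i>\lambda_i$) is simply one of the $\lambda_i^*$ free unknowns, and dropping the nonzero constraint on it only \emph{over}counts, which is fine for the stated upper bound.
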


\begin{proof}
 Let $t=(t_1,\ldots,t_d)\in\Dn(\Cn)$ with $\varrho_1(t_i) = \gamma_i$ for all $1\leq i\leq d$ and either $\gamma_i\leq \lambda_i$ or $\varrho_1(t_i')=\lambda_i$. Let $t_i$ have $b$-adic expansion $t_i = \tau_{i,0} + \tau_{i,1} b + \tau_{i,2} b^2 + \ldots$. Let $C_i=(c_{i,1},\ldots,c_{i,s})^{\top}$, put $\lambda^*_i=\min(\lambda_i,\gamma_i-1)$ and $c_{i,\gamma_i}=(0,\ldots,0)$ if $\gamma_i>s$, $1\leq i\leq d$. Then we have
\begin{align}
 & c_{1,1}^{\top}\tau_{1,0}+\ldots+c_{1,\lambda^*_1}^{\top}\tau_{1,\lambda^*_1-1}+c_{1,\gamma_1}^{\top}\tau_{1,\gamma_1-1}+\notag\\
 & \vdots\label{system}\\
+ & c_{d,1}^{\top}\tau_{d,0}+\ldots+c_{d,\lambda^*_d}^{\top}\tau_{d,\lambda^*_d-1}+c_{d,\gamma_d}^{\top}\tau_{d,\gamma_d-1} = (0\ldots,0)^{\top}\in\F_b^n\notag.
\end{align}
We put
\[ A = (c_{1,1}^{\top},\ldots,c_{1,\lambda^*_1}^{\top},\ldots,c_{d,1}^{\top},\ldots,c_{d,\lambda^*_d}^{\top})\in\F_b^{n\times(\lambda^*_1+\ldots+\lambda^*_d)}, \]
\[ y = (\tau_{1,0},\ldots,\tau_{1,\lambda^*_1-1},\ldots,\tau_{d,0},\ldots,\tau_{d,\lambda^*_d-1})^{\top}\in\F_b^{(\lambda^*_1+\ldots+\lambda^*_d)\times 1} \]
and
\[ w = -c_{1,\gamma_1}^{\top}\tau_{1,\gamma_1-1}-\ldots-c_{d,\gamma_d}^{\top}\tau_{d,\gamma_d-1}\in\F_b^{n\times 1}. \]
Then \eqref{system} corresponds to $Ay=w$ and we have
\[ \omega_{\gamma_1,\ldots,\gamma_d}^{\lambda_1,\ldots,\lambda_d}(\Cn) = \#\{(y,w)\in\F_b^{\lambda^*_1+\ldots+\lambda^*_d}\times\F_b^n:\,Ay=w\}. \]
Since $C_1,\ldots,C_d$ generate an order $1$ digital $(v,n,d)$-net, the rank of $A$ is $\lambda^*_1+\ldots+\lambda^*_d$ if $\lambda^*_1+\ldots+\lambda^*_d\leq n-v$. In this case the solution space of the homogeneous system $Ay=(0,\ldots,0)$ has dimension $0$. If $\lambda^*_1+\ldots+\lambda^*_d>n-v$ then $\rank(A)\geq n-v$ and the dimension of the solution space of the homogeneous system is $\lambda^*_1+\ldots+\lambda^*_d-\rank(A)\leq\lambda_1+\ldots+\lambda_d-n+v$. This means that for a given $w$ the system $Ay=w$ has at most $1$ solution if $\lambda^*_1+\ldots+\lambda^*_d\leq n-v$ and at most $b^{\lambda^*_1+\ldots+\lambda^*_d-n+v}$ solutions otherwise. Finally, there are $(b-1)^d$ possible choices for $w$ since none of the numbers $\tau_{1,\gamma_1-1},\ldots,\tau_{d,\gamma_d-1}$ can be $0$.
\end{proof}

We point out that the condition $\lambda_1,\ldots,\lambda_d\leq s$ is not necessary. It just reduces the technicalities but the results would be the same without it. One would have to define $\lambda^{**}_i=\min(\lambda_i*,s)$ and in the case where $\lambda_i* >s$ we would get an additional factor $b^{\lambda_i*-s}$ compensating the restriction.

\begin{lem}\label{prp_ord1}
 Let $\P_n^b$ be an order 1 digital $(v,n,d)$-net over $\F_b$. Let $j\in\N_{-1}^d,\,m\in\D_j,\,l\in\B_j$.
\begin{enumerate}[(i)]
 \item If $|j|_+ \geq n-v$ then $|\langle D_{\P_n^b},h_{j,m,l}\rangle|\preceq b^{-|j|_+ -n+v}$ and $|\langle D_{\P_n^b},h_{j,m,l}\rangle|\preceq b^{-2|j|_+}$ for all but at most $b^n$ values of $m$.\label{prp_ord1_part1}
 \item If $|j|_+ < n-v$ then $|\langle D_{\P_n^b},h_{j,m,l}\rangle|\preceq b^{-|j|_+ -n+v}\left(n-v-|j|_+\right)^{d-1}$.\label{prp_ord1_part2}   
\end{enumerate}                                                                                            
\end{lem}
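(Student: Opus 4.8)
The plan is to expand $D_{\mathcal{P}_n^b}$ via the Haar basis by combining the Walsh-series representation of the discrepancy function (Lemma~\ref{lem_discr_walsh}) with the scalar products between Haar and Walsh functions (Lemma~\ref{lem_haar_walsh_scalar}) and between $\hat\chi_{[0,\cdot)}(t)$ and Walsh functions (Lemma~\ref{lem_scalar_chiwalsh_walsh}). Concretely, writing $D_{\mathcal{P}_n^b} = \sum_{t\in\mathfrak{D}(\mathfrak{C})\setminus\{\vec0\}}\hat\chi_{[0,\cdot)}(t)$ and expanding each $\hat\chi_{[0,\cdot)}(t_i)$ into a one-dimensional Walsh series, one gets
\[ \langle D_{\mathcal{P}_n^b},h_{j,m,l}\rangle = \sum_{t\in\mathfrak{D}(\mathfrak{C})\setminus\{\vec0\}}\ \sum_{\alpha\in\N_0^d}\ \prod_{i=1}^d \langle\hat\chi_{[0,\cdot)}(t_i),\wal_{\alpha_i}\rangle\,\langle h_{j_i,m_i,l_i},\wal_{\alpha_i}\rangle. \]
By Lemma~\ref{lem_haar_walsh_scalar} only $\alpha_i$ with $\varrho_1(\alpha_i)=j_i+1$ contribute (and such $\alpha_i$ exist only when $j_i\geq 0$; the coordinates with $j_i=-1$ are handled separately, forcing $\alpha_i=0$ hence $t_i\in\{0,t_i'\}$ small). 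By Lemma~\ref{lem_scalar_chiwalsh_walsh} the inner product $\langle\hat\chi_{[0,\cdot)}(t_i),\wal_{\alpha_i}\rangle$ vanishes unless $\alpha_i\in\{t_i,t_i',(t_i)\text{ with }\alpha_i'=t_i\}$, i.e.\ unless $\varrho_1(t_i)=j_i+1$ or $\varrho_1(t_i')=j_i+1$. So the sum collapses to those $t$ for which, in each coordinate, either $\varrho_1(t_i)=j_i+1$ or ($\varrho_1(t_i)>j_i+1$ and) $\varrho_1(t_i')=j_i+1$ — which is exactly the index set counted by $\omega^{\lambda_1,\dots,\lambda_d}_{\gamma_1,\dots,\gamma_d}(\mathfrak{C})$ with the choice $\lambda_i=j_i+1$ and $\gamma_i\geq j_i+1$.

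\emph{Next}, I would bound the size of each surviving term. Each factor $\langle\hat\chi_{[0,\cdot)}(t_i),\wal_{\alpha_i}\rangle$ is $\preceq b^{-\max(\varrho_1(t_i),\varrho_1(\alpha_i))} = b^{-\varrho_1(t_i)}$ (since $\varrho_1(\alpha_i)=j_i+1\leq\varrho_1(t_i)$), and $|\langle h_{j_i,m_i,l_i},\wal_{\alpha_i}\rangle|\preceq b^{-|j_i|_+} = b^{-j_i}$ for $j_i\geq 0$ (and the number of nonzero such $\alpha_i$, namely those with $\varrho_1(\alpha_i)=j_i+1$ contributing to a fixed $t_i$, is bounded by an absolute constant — essentially $\alpha_i\in\{t_i,t_i'\}$). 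Hence each term is $\preceq b^{-|j|_+}\,b^{-(\varrho_1(t_1)+\dots+\varrho_1(t_d))}$, and since $\varrho_1(t_i)\geq j_i+1$ we can split $b^{-\varrho_1(t_i)} = b^{-(j_i+1)}b^{-(\varrho_1(t_i)-j_i-1)}$. Summing over all admissible $t$ with $\varrho_1(t_i)=\gamma_i$ fixed and using Lemma~\ref{card_such_t} with $\lambda_i=j_i+1$ gives a contribution
\[ \preceq b^{-2|j|_+}\sum_{\gamma_1\geq j_1+1,\dots,\gamma_d\geq j_d+1} b^{-(\gamma_1-j_1-1)-\dots-(\gamma_d-j_d-1)}\, b^{(\min(j_1+1,\gamma_1-1)+\dots+\min(j_d+1,\gamma_d-1)-n+v)_+}. \]
Writing $\gamma_i = j_i+1+s_i$ with $s_i\geq 0$, the exponent $\min(j_i+1,\gamma_i-1) = \min(j_i+1,j_i+s_i) = j_i + \min(1,s_i)$, so the $(\cdot)_+$ exponent is at most $(|j|_+ + \#\{i: s_i\geq 1\} - n+v)_+ \leq (|j|_+ + d - n + v)_+$ — but more carefully one keeps track of the $s_i$ to get geometric decay.

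\emph{The two regimes} then come out as follows. \textbf{Case $|j|_+<n-v$.} Here the relevant $t$ have $\gamma_i$ not too large, the $(\cdot)_+$ exponent behaves like $|j|_+ + r - n + v$ where $r=\sum_i\min(s_i,1)\cdot(\text{something})$; carrying out the geometric sums over $s_1,\dots,s_d$ subject to a constraint on $|j|_+ + \dots$ and invoking Lemma~\ref{index_dim_red} to count tuples with a fixed total produces the factor $(n-v-|j|_+)^{d-1}$, and the leading power is $b^{-2|j|_+}\cdot b^{|j|_+-n+v} = b^{-|j|_+-n+v}$, giving part~(ii). \textbf{Case $|j|_+\geq n-v$.} Now the $(\cdot)_+$ is essentially always active; the sum over the $s_i$ is an unconstrained geometric sum (bounded by a constant) and one gets the uniform bound $\preceq b^{-|j|_+-n+v}$. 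For the refined estimate "for all but at most $b^n$ values of $m$", I would use Lemma~\ref{lem_haar_coeff_number}/Lemma~\ref{lem_haar_coeff_vol} style reasoning: $D_{\mathcal{P}_n^b} = \frac1{b^n}\sum_{z\in\mathcal{P}_n^b}\chi_{[0,\cdot)}(z) - x_1\cdots x_d$; the Haar coefficient of the smooth part $x_1\cdots x_d$ is $\preceq b^{-2|j|_+}$ (Lemma~\ref{lem_haar_coeff_vol}), and the coefficient of $\chi_{[0,\cdot)}(z)$ vanishes unless $z$ lies in the interior of $I_{j,m}$ (Lemma~\ref{lem_haar_coeff_number}); a $b$-adic box $I_{j,m}$ with $|j|_+\geq n$ contains no point of $\mathcal{P}_n^b$ in its interior for all but $\preceq b^n$ choices of $m$ (each of the $b^n$ points lies in at most one such interior), so only $\preceq b^n$ values of $m$ can make the first term nonzero, and for the remaining $m$ we are left with just $b^{-2|j|_+}$. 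For $n-v\le|j|_+<n$ one argues similarly using that each point lies in $\preceq 1$ interior of a box of order $|j|_+$ and there are $\preceq b^{|j|_+}\le b^n$ boxes.

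\emph{The main obstacle} is the bookkeeping in the first regime: correctly identifying, for a fixed Haar index $(j,m,l)$, exactly which $t\in\mathfrak{D}(\mathfrak{C})$ survive both vanishing conditions and matching this to the hypotheses of Lemma~\ref{card_such_t} (getting the $\lambda_i=j_i+1$, $\gamma_i\geq j_i+1$ correspondence right, including the borderline cases $\gamma_i=j_i+1$ versus $\gamma_i>j_i+1$ where the $\varrho_1(t_i')=\lambda_i$ clause kicks in), and then performing the nested geometric summation so that the constraint "$\sum$ of the jumps $\approx n-v-|j|_+$" is extracted via Lemma~\ref{index_dim_red} to yield precisely the polynomial factor $(n-v-|j|_+)^{d-1}$ rather than a wrong power. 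The $m$-dependence refinement in part~(i), while conceptually easy, also needs care to phrase cleanly using only the lemmas available.
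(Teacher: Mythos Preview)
Your overall strategy for part~(ii) matches the paper's: expand $D_{\mathcal P_n^b}$ via Lemma~\ref{lem_discr_walsh}, expand $h_{j,m,l}$ in the Walsh basis, combine Lemmas~\ref{lem_haar_walsh_scalar} and~\ref{lem_scalar_chiwalsh_walsh} to restrict the double sum, then apply Lemma~\ref{card_such_t} and Lemma~\ref{index_dim_red}. There is, however, a genuine gap in your reduction. You correctly list the three non-vanishing alternatives $\alpha_i=t_i$, $\alpha_i=t_i'$, $\alpha_i'=t_i$ from Lemma~\ref{lem_scalar_chiwalsh_walsh}, but then conclude that the surviving $t$ satisfy ``$\varrho_1(t_i)=j_i+1$ or $\varrho_1(t_i')=j_i+1$'' and restrict to $\gamma_i\ge j_i+1$. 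This drops the third alternative: when $\alpha_i'=t_i$ with $\varrho_1(\alpha_i)=j_i+1$, one gets $\varrho_1(t_i)\le j_i$, so $\gamma_i$ ranges over $\{0,\dots,j_i\}$ as well. The correct condition is $\varrho_1(t_i)\le j_i+1$ \emph{or} $\varrho_1(t_i')=j_i+1$, and the paper accordingly sums $\omega_{\gamma_1,\dots,\gamma_d}^{j_1+1,\dots,j_d+1}(\mathfrak C)$ over all $\gamma_i\ge 0$, then splits according to the subset $I\subsetneq\{1,\dots,d\}$ of coordinates with $\gamma_i\le j_i$ (the case $I=\{1,\dots,d\}$ being vacuous because $|j|_+<n-v$ forces $\varrho_1(t)>n-v$). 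Your simplification $\max(\varrho_1(t_i),j_i+1)=\varrho_1(t_i)$ is likewise invalid on the coordinates in $I$. Each $I$ ultimately contributes a term of the same order, so your target bound is right --- but as written you only control the $I=\emptyset$ piece, not the full Haar coefficient.

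For part~(i) the paper avoids the Walsh machinery entirely: both statements follow directly from Lemmas~\ref{lem_haar_coeff_vol} and~\ref{lem_haar_coeff_number} together with Lemma~\ref{nets_distribution}, which guarantees that every $b$-adic box of order at most $n-v$ contains at most $b^v$ points of $\mathcal P_n^b$ (so the counting part of the Haar coefficient is $\preceq b^v\cdot b^{-n}\cdot b^{-|j|_+}$). Your Walsh-based derivation of the first bound would inherit the missing-case issue above and is unnecessarily heavy; your argument for the refined ``all but $b^n$ values of $m$'' bound is correct and coincides with the paper's.
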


\begin{proof}
For \eqref{prp_ord1_part1}, let $|j|_+ \geq n-v$. Since $\P_n^b$ contains exactly $b^n$ points, there are no more than $b^n$ such $m$ for which $I_{j,m}$ contains a point of $\P_n^b$ meaning that at least all but $b^n$ intervals contain no points at all. Thus the second statement follows from Lemmas \ref{lem_haar_coeff_vol} and \ref{lem_haar_coeff_number}. The remaining intervals contain at most $b^v$ points of $\P_n^b$ (Lemma \ref{nets_distribution}) so the first statement follows from Lemmas \ref{lem_haar_coeff_vol} and \ref{lem_haar_coeff_number}.

We now prove \eqref{prp_ord1_part2}. Let $|j|_+ < n-v$ and $m\in\D_j,\,l\in\B_j$. The function $h_{j,m,l}$ can be written (Lemma \ref{Walsh_basis}) as
\[ h_{j,m,l} = \sum_{\alpha\in\N_0^d} \langle h_{j,m,l},\wal_{\alpha}\rangle\wal_\alpha. \]

We apply Lemmas \ref{lem_discr_walsh}, \ref{lem_haar_walsh_scalar} and \ref{lem_scalar_chiwalsh_walsh} and get
\begin{align}
&|\langle D_{\P_n^b},h_{j,m,l}\rangle| = \left|\left\langle \sum_{t\in\Dn(\Cn)\setminus\{\vec{0}\}}\hat{\chi}_{[0,\cdot)}(t),\sum_{\alpha\in\N_0^d}\left\langle h_{j,m,l},\wal_{\alpha}\right\rangle\wal_{\alpha}\right\rangle\right|\notag\\
&\leq\sum_{t\in\Dn(\Cn)\setminus\{\vec{0}\}}\sum_{\alpha \in \N_0^d}\left|\left\langle\hat{\chi}_{[0,\cdot)}(t),\wal_{\alpha}\right\rangle\right|\left|\left\langle h_{j,m,l},\wal_{\alpha}\right\rangle\right|\notag\\
&\leq b^{-|j|_+}\sum_{t\in\Dn(\Cn)\setminus\{\vec{0}\}}\sum_{\substack{\alpha \in \N_0^d\\ \varrho_1(\alpha_i)=j_i+1\\1\leq i\leq d}}\left|\left\langle\hat{\chi}_{[0,\cdot)}(t),\wal_{\alpha}\right\rangle\right|\notag\\
&\leq b^{-|j|_+}\sum_{t\in\Dn(\Cn)\setminus\{\vec{0}\}}\sum_{\substack{\alpha \in \N_0^d\\\alpha_i=t_i'\,\vee\,\alpha_i=t_i\,\vee\,\alpha_i'=t_i\\ \varrho_1(\alpha_i)=j_i+1,\,1\leq i\leq d}}b^{-\max(\varrho_1(\alpha_1),\varrho_1(t_1))-\ldots-\max(\varrho_1(\alpha_1),\varrho_1(t_d))}\notag\\
&= b^{-|j|_+}\sum_{\substack{t\in\Dn(\Cn)\setminus\{\vec{0}\}\\ \varrho_1(t_i)\leq j_i+1\,\vee\,\varrho_1(t_i')=j_i+1\\1\leq i\leq d}}b^{-\max(j_1+1,\varrho_1(t_1))-\ldots-\max(j_d+1,\varrho_1(t_d))}\notag\\
&= b^{-|j|_+}\sum_{\gamma_1,\ldots,\gamma_d=0}^\infty b^{-\max(j_1+1,\gamma_1)-\ldots-\max(j_d+1,\gamma_d)}\,\omega_{\gamma_1,\ldots,\gamma_d}^{j_1+1,\ldots,j_d+1}(\Cn)\label{calc_gamma}\\
&= b^{-|j|_+}\sum_{\substack{\gamma_1,\ldots,\gamma_d=0\\ \gamma_1+\ldots+\gamma_d>n-v}}^\infty b^{-\max(j_1+1,\gamma_1)-\ldots-\max(j_d+1,\gamma_d)}\,\omega_{\gamma_1,\ldots,\gamma_d}^{j_1+1,\ldots,j_d+1}(\Cn) +\notag \\
& \qquad + b^{-|j|_+}\sum_{\substack{\gamma_1,\ldots,\gamma_d=0\\ \gamma_1+\ldots+\gamma_d\leq n-v}}^\infty b^{-\max(j_1+1,\gamma_1)-\ldots-\max(j_d+1,\gamma_d)}\,\omega_{\gamma_1,\ldots,\gamma_d}^{j_1+1,\ldots,j_d+1}(\Cn)\notag.
\end{align}
By Lemma \ref{card_such_t} we get
\[ \omega_{\gamma_1,\ldots,\gamma_d}^{j_1+1,\ldots,j_d+1}(\Cn) \leq (b-1)^d\,b^d \]
since $j_1+1,\ldots,j_d+1\leq n-v\leq s$ and $j_1+1+\ldots+j_d+1\leq  |j|_+ +d< n-v+d$. We apply this only to the first sum incorporating this term into the constant. The second sum vanishes. To see that we recall that $\varrho_1(t)>n-v$ for all $t\in\Dn(\Cn)\setminus\{\vec{0}\}$. This means that $\omega_{\gamma_1,\ldots,\gamma_d}^{j_1+1,\ldots,j_d+1}(\Cn)=0$ whenever $\gamma_1+\ldots+\gamma_d\leq n-v$ since $\varrho_1(t) = \gamma_1+\ldots+\gamma_d$ and the second sum vanishes. For any $I\subset\{1,\ldots,d\}$ let $I^c=\{1,\ldots,d\}\setminus I$. So far we have
\begin{align*}
&|\langle D_{\P_n^b},h_{j,m,l}\rangle| \preceq b^{-|j|_+}\sum_{\substack{\gamma_1,\ldots,\gamma_d=0\\ \gamma_1+\ldots+\gamma_d>n-v}}^\infty b^{-\max(j_1+1,\gamma_1)-\ldots-\max(j_d+1,\gamma_d)}\\
&=b^{-|j|_+}\sum_{I\subsetneq\{1,\ldots,d\}}b^{-\sum\limits_{\kappa_1\in I}(j_{\kappa_1}+1)}\underset{\gamma_1+\ldots+\gamma_d\geq\max\left(n-v+1,\sum\limits_{\kappa_2\in I^c}(j_{\kappa_2}+1)\right)}{\sum_{\substack{0\leq\gamma_{i_1}\leq j_{i_1}\\i_1\in I}}\;\sum_{\substack{\gamma_{i_2}\geq j_{i_2}+1\\i_2\in I^c}}} b^{-\sum\limits_{\kappa_2\in I^c}\gamma_{\kappa_2}}.\\
\end{align*}
The case where $I = \{1,\ldots,d\}$ is not possible (therefore excluded) because $\gamma_i\leq j_i$ for all $1\leq i\leq d$ contradicts the condition $\gamma_1+\ldots+\gamma_d>n-v$ since $j_1 + \ldots + j_d < n-v$. We perform an index shift to get
\begin{align*}
&|\langle D_{\P_n^b},h_{j,m,l}\rangle| \preceq b^{-|j|_+}\sum_{I\subsetneq\{1,\ldots,d\}}b^{-\sum\limits_{\kappa_1\in I}(j_{\kappa_1}+1)-\sum\limits_{\kappa_2\in I^c}(j_{\kappa_2}+1)}\ldots\\&\hspace{2.5cm}\ldots\sum_{\substack{0\leq\gamma_{i_1}\leq j_{i_1}\\i_1\in I}}\;\sum_{\substack{\gamma_{i_2}\geq 0,\,i_2\in I^c\\ \sum\limits_{\kappa_2\in I^c}\gamma_{\kappa_2}\geq\left(n-v-\sum\limits_{\kappa_1\in I}\gamma_{\kappa_1}-\sum\limits_{\kappa_2\in I^c}(j_{\kappa_2}+1)+1\right)_+}} b^{-\sum\limits_{\kappa_2\in I^c}\gamma_{\kappa_2}}.
\end{align*}
We apply Lemma \ref{index_dim_red} to obtain
\begin{align*}
&\leq b^{-|j|_+}\sum_{I\subsetneq\{1,\ldots,d\}}b^{-\sum\limits_{\kappa_1\in I}(j_{\kappa_1}+1)-\sum\limits_{\kappa_2\in I^c}(j_{\kappa_2}+1)}\ldots\\&\hspace{2cm}\ldots\sum_{\substack{0\leq\gamma_{i_1}\leq j_{i_1}\\i_1\in I}}\;\sum_{r=\left(n-v-\sum\limits_{\kappa_1\in I}\gamma_{\kappa_1}-\sum\limits_{\kappa_2\in I^c}(j_{\kappa_2}+1)+1\right)_+}^\infty b^{-r}(r+1)^{d-1-\#I}\\
&\leq b^{-|j|_+}\sum_{I\subsetneq\{1,\ldots,d\}}b^{-\sum\limits_{\kappa_1\in I}(j_{\kappa_1}+1)-\sum\limits_{\kappa_2\in I^c}(j_{\kappa_2}+1)}\sum_{\substack{0\leq\gamma_{i_1}\leq j_{i_1}\\i_1\in I}}\;b^{-n+v+\sum\limits_{\kappa_1\in I}\gamma_{\kappa_1}+\sum\limits_{\kappa_2\in I^c}(j_{\kappa_2}+1)}\\&\hspace{4.5cm}\times\left(n-v-\sum\limits_{\kappa_1\in I}\gamma_{\kappa_1}-\sum\limits_{\kappa_2\in I^c}(j_{\kappa_2}+1)+1\right)_+^{d-1-\#I}\\
&\leq b^{-|j|_+ -n+v}\sum_{I\subsetneq\{1,\ldots,d\}}b^{-\sum\limits_{\kappa_1\in I}(j_{\kappa_1}+1)}\sum_{\substack{0\leq\gamma_{i_1}\leq j_{i_1}\\i_1\in I}}b^{\sum\limits_{\kappa_1\in I}\gamma_{\kappa_1}}\\&\hspace{4.5cm}\times\left(n-v-\sum\limits_{\kappa_1\in I}\gamma_{\kappa_1}-\sum\limits_{\kappa_2\in I^c}(j_{\kappa_2}+1)+1\right)_+^{d-1}\\
&\leq b^{-|j|_+ -n+v}\sum_{I\subsetneq\{1,\ldots,d\}}b^{-\sum\limits_{\kappa_1\in I}(j_{\kappa_1}+1)}b^{\sum\limits_{\kappa_1\in I}(j_{\kappa_1}+1)}\\&\hspace{4cm}\times\left(n-v-\sum\limits_{\kappa_1\in I}(j_{\kappa_1}+1)-\sum\limits_{\kappa_2\in I^c}(j_{\kappa_2}+1)+1\right)_+^{d-1}\\
&\preceq b^{-|j|_+ -n+v}\left(n-v-|j|_+\right)^{d-1}.
\end{align*}
\end{proof}

\begin{lem}\label{prp_ord2}
 Let $\P_n^b$ be an order $2$ digital $(v,n,d)$-net over $\F_b$. Let $j\in\N_{-1}^d,\,m\in\D_j,\,l\in\B_j$.
\begin{enumerate}[(i)]
 \item If $|j|_+\geq n-\lceil v/2\rceil$ then $|\langle D_{\P_n^b},h_{j,m,l}\rangle|\preceq b^{-|j|_+-n+v/2}$ and $|\langle D_{\P_n^b},h_{j,m,l}\rangle|\preceq b^{-2|j|_+}$ for all but $b^n$ values of $m$.\label{prp_ord2_part1}
 \item If $|j|_+< n-\lceil v/2\rceil$ then $|\langle D_{\P_n^b},h_{j,m,l}\rangle|\preceq b^{-2n+v}\left(2n-v-2|j|_+\right)^{d-1}$.\label{prp_ord2_part2}
\end{enumerate}                                                                                                 
\end{lem}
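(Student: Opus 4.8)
The plan is to mirror the proof of Lemma \ref{prp_ord1} but exploit the stronger order $2$ net property, which controls $\varrho_2(t) > 2n-v$ for $t\in\Dn(\Cn)\setminus\{\vec 0\}$ rather than merely $\varrho_1(t) > n-v$. For part \eqref{prp_ord2_part1}, I would use Lemma \ref{lem_digital_nets_sigma_v}(ii): an order $2$ digital $(v,n,d)$-net is an order $1$ digital $(\lceil v/2\rceil,n,d)$-net, hence by Lemma \ref{nets_distribution} every $b$-adic interval of order $n-\lceil v/2\rceil$ contains exactly $b^{\lceil v/2\rceil}$ points. Since there are only $b^n$ points in $\P_n^b$, at most $b^n$ of the intervals $I_{j,m}$ with $|j|_+\geq n-\lceil v/2\rceil$ contain any point, giving the bound $\preceq b^{-2|j|_+}$ for all but $b^n$ values of $m$ from Lemmas \ref{lem_haar_coeff_vol} and \ref{lem_haar_coeff_number}; the remaining intervals hold at most $b^{\lceil v/2\rceil}\preceq b^{v/2}$ points, yielding $|\langle D_{\P_n^b},h_{j,m,l}\rangle|\preceq b^{-|j|_+}\cdot b^{v/2}\cdot b^{-|j|_+}$, wait — more precisely the count bound gives $\preceq b^{-|j|_+-n+v/2}$ after combining the $b^{-|j|_+}$ from Lemma \ref{lem_haar_coeff_number}, the $b^{v/2}$ point count, the $b^{-n}$ normalization, and the $b^{-2|j|_+}$ volume term, of which the dominant contribution is the stated one.

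For part \eqref{prp_ord2_part2}, I would run the same Walsh expansion as in \eqref{calc_gamma}: expand $h_{j,m,l}$ in the Walsh basis (Lemma \ref{Walsh_basis}), insert the Walsh expansion of $D_{\P_n^b}$ from Lemma \ref{lem_discr_walsh}, and apply Lemmas \ref{lem_haar_walsh_scalar} and \ref{lem_scalar_chiwalsh_walsh}. This reduces the estimate to
\[
|\langle D_{\P_n^b},h_{j,m,l}\rangle|\preceq b^{-|j|_+}\sum_{\gamma_1,\ldots,\gamma_d=0}^\infty b^{-\max(j_1+1,\gamma_1)-\ldots-\max(j_d+1,\gamma_d)}\,\omega_{\gamma_1,\ldots,\gamma_d}^{j_1+1,\ldots,j_d+1}(\Cn),
\]
exactly as before. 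The new ingredient is a \emph{second-order} analogue of Lemma \ref{card_such_t}: I need a bound on $\omega_{\gamma_1,\ldots,\gamma_d}^{\lambda_1,\ldots,\lambda_d}(\Cn)$ that uses the order $2$ property, controlling not just the leading $b$-adic digit of each $t_i$ but the top \emph{two} digits. The relevant linear system now involves, for each $i$, the columns $c_{i,1},\ldots,c_{i,\lambda_i^*}$ together with \emph{two} extra columns $c_{i,\varrho_1(t_i')}$ and $c_{i,\gamma_i}$ (the positions of the top two nonzero digits), and the order $2$ condition guarantees linear independence as long as the sum of the relevant indices is at most $2n-v$. Carrying this through — by the same rank argument as in Lemma \ref{card_such_t} but now with a "budget" of $2n-v$ and with $\varrho_2(t)=\gamma_i+\varrho_1(t_i')$ summed over $i$ governing when $\omega$ vanishes — should give a bound of the shape $\omega \leq (b-1)^d b^{d}\, b^{(\text{something} - 2n + v)_+}$ with the sum of second-order indices playing the role previously played by the sum of first-order indices.

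Finally I would substitute this second-order cardinality bound into the displayed sum, split off the terms where $\varrho_2(t)\leq 2n-v$ (which vanish by Lemma \ref{varrhogreater}), and estimate the remaining geometric-type sum over $\gamma_1,\ldots,\gamma_d$ using Lemmas \ref{index_dim_red} and \ref{index_dim_red_log} — exactly the index-shift-and-sum bookkeeping from the end of the proof of Lemma \ref{prp_ord1}, but with $n-v$ replaced by $2n-v$ and $|j|_+$ effectively replaced by $2|j|_+$ (since each coordinate now carries two indices near level $j_i+1$). This produces $b^{-|j|_+}\cdot b^{-|j|_+-2n+v+2|j|_+}(2n-v-2|j|_+)^{d-1}$, i.e. the claimed $b^{-2n+v}(2n-v-2|j|_+)^{d-1}$. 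The main obstacle I anticipate is correctly formulating and proving the order $2$ version of Lemma \ref{card_such_t}: one must carefully track which two digit-positions of each $t_i$ are constrained, handle the case $\varrho_1(t_i)=\gamma_i\leq\lambda_i$ (where only one extra column appears) separately from $\varrho_1(t_i')=\lambda_i$ (two extra columns), and make sure the index budget $2n-v$ is spent correctly so that the rank of the coefficient matrix is full in the relevant range — everything downstream is the same combinatorial summation already executed in Lemma \ref{prp_ord1}.
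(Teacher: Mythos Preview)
Your plan for part \eqref{prp_ord2_part1} matches the paper exactly: reduce to the order $1$ case via Lemma \ref{lem_digital_nets_sigma_v}(ii) and invoke Lemma \ref{prp_ord1}\eqref{prp_ord1_part1} with quality parameter $\lceil v/2\rceil$.

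For part \eqref{prp_ord2_part2} your overall architecture is right, but the ``main obstacle'' you flag --- proving an order $2$ analogue of Lemma \ref{card_such_t} --- is a detour the paper avoids entirely. The paper uses the \emph{same} order $1$ Lemma \ref{card_such_t} (applied to $\P_n^b$ viewed as an order $1$ $(\lceil v/2\rceil,n,d)$-net) to bound $\omega_{\gamma_1,\ldots,\gamma_d}^{j_1+1,\ldots,j_d+1}(\Cn)$ by the constant $(b-1)^d b^d$, exactly as in Lemma \ref{prp_ord1}. The order $2$ property is used \emph{only} to enlarge the vanishing region: the key observation is that any $t$ counted by $\omega_{\gamma_1,\ldots,\gamma_d}^{j_1+1,\ldots,j_d+1}(\Cn)$ satisfies $\varrho_2(t_i)\le \gamma_i+\min(\gamma_i,j_i+1)$ --- because if $\gamma_i\le j_i+1$ then $\varrho_2(t_i)\le 2\gamma_i$, while if $\gamma_i>j_i+1$ the constraint $\varrho_1(t_i')=j_i+1$ forces $\varrho_2(t_i)=\gamma_i+(j_i+1)$. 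Hence $\omega=0$ whenever $\sum_i\bigl(\gamma_i+\min(\gamma_i,j_i+1)\bigr)\le 2n-v$, and the sum in \eqref{calc_gamma} is restricted to $\sum_i\bigl(\gamma_i+\min(\gamma_i,j_i+1)\bigr)>2n-v$. The rest is the same $I\subsetneq\{1,\ldots,d\}$ splitting, index shift, and application of Lemma \ref{index_dim_red} as in Lemma \ref{prp_ord1}, with the threshold $n-v$ replaced by $2n-v$ and an extra factor of $2$ on the $\gamma_{\kappa_1}$ terms from $I$.

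In short: your plan would likely work, but the rank argument you propose for an order $2$ Lemma \ref{card_such_t} would not actually improve the bound on $\omega$ in the relevant regime (the matrix $A$ already has full rank by the order $1$ property when $|j|_+<n-\lceil v/2\rceil$); all the gain comes from the vanishing, which you mention but underweight. The inequality $\varrho_2(t)\le\sum_i\bigl(\gamma_i+\min(\gamma_i,j_i+1)\bigr)$ is the one idea you are missing explicitly, and it is what makes the second-order argument go through without any new counting lemma.
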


\begin{proof}
According to Lemma \ref{lem_digital_nets_sigma_v}, $\P_n^b$ is an order $1$ digital $(\lceil v/2\rceil,n,d)$-net. Hence \eqref{prp_ord2_part1} follows from Lemma \ref{prp_ord1}.

We now prove \eqref{prp_ord2_part2}. Let $|j|_+< n-\lceil v/2\rceil$ and $m\in\D_j,\,l\in\B_j$. We start at \eqref{calc_gamma} so we have
\begin{align*}
&|\langle D_{\P_n^b},h_{j,m,l}\rangle| \\
&\preceq b^{-|j|_+}\sum_{\substack{\gamma_1,\ldots,\gamma_d=0\\ \sum_{i=1}^d\gamma_i+\min(\gamma_i,j_i+1)>2n-v}}^\infty b^{-\max(j_1+1,\gamma_1)-\ldots-\max(j_d+1,\gamma_d)}\,\omega_{\gamma_1,\ldots,\gamma_d}^{j_1+1,\ldots,j_d+1}(\Cn) \\
& \; + b^{-|j|_+}\sum_{\substack{\gamma_1,\ldots,\gamma_d=0\\ \sum_{i=1}^d\gamma_i+\min(\gamma_i,j_i+1)\leq 2n-v}}^\infty b^{-\max(j_1+1,\gamma_1)-\ldots-\max(j_d+1,\gamma_d)}\,\omega_{\gamma_1,\ldots,\gamma_d}^{j_1+1,\ldots,j_d+1}(\Cn).
\end{align*}
We argue similarly to the proof of Lemma \ref{prp_ord1}, incorporating the term $\omega_{\gamma_1,\ldots,\gamma_d}^{j_1+1,\ldots,j_d+1}(\Cn)$ in the first sum into the constant and seeing that the second sum vanishes. To see that the second sum vanishes we recall that we have $\varrho_2(t)>2n-v$ for all $t\in\Dn(\Cn)$. This means that $\omega_{\gamma_1,\ldots,\gamma_d}^{j_1+1,\ldots,j_d+1}(\Cn)=0$ whenever $\gamma_1+\min(\gamma_1,j_1+1)+\ldots+\gamma_d+\min(\gamma_d,j_d+1)\leq 2n-v$ because $\varrho_2(t)\leq\gamma_1+\min(\gamma_1,j_1+1)+\ldots+\gamma_d+\min(\gamma_d,j_d+1)$ since $\varrho_1(t_i) = \gamma_i$ and $\varrho_1(t_i') = j_i + 1$ if $\gamma_i>j_i + 1$ for all $1\leq i\leq d$. With the same arguments as in the proof of Lemma \ref{prp_ord1} we obtain
\begin{align*}
&|\langle D_{\P_n^b},h_{j,m,l}\rangle| \preceq b^{-|j|_+}\sum_{\substack{\gamma_1,\ldots,\gamma_d=0\\ \sum_{i=1}^d\gamma_i+\min(\gamma_i,j_i+1)>2n-v}}^\infty b^{-\max(j_1+1,\gamma_1)-\ldots-\max(j_d+1,\gamma_d)}\\
&=b^{-|j|_+}\sum_{I\subsetneq\{1,\ldots,d\}}b^{-\sum\limits_{\kappa_1\in I}(j_{\kappa_1}+1)}\ldots\\&\hspace{2.5cm}\ldots\underset{2\sum\limits_{\kappa_1\in I}\gamma_{\kappa_1}+\sum\limits_{\kappa_2\in I^c}(\gamma_{\kappa_2}+j_{\kappa_2}+1)\geq\max\left(2n-v+1,2\sum\limits_{\kappa_2\in I^c}(j_{\kappa_2}+1)\right)}{\sum_{\substack{0\leq\gamma_{i_1}\leq j_{i_1}\\i_1\in I}}\;\sum_{\substack{\gamma_{i_2}\geq j_{i_2}+1\\i_2\in I^c}}} b^{-\sum\limits_{\kappa_2\in I^c}\gamma_{\kappa_2}}\\
&=b^{-|j|_+}\sum_{I\subsetneq\{1,\ldots,d\}}b^{-\sum\limits_{\kappa_1\in I}(j_{\kappa_1}+1)-\sum\limits_{\kappa_2\in I^c}(j_{\kappa_2}+1)}\ldots\\&\hspace{2cm}\ldots\sum_{\substack{0\leq\gamma_{i_1}\leq j_{i_1}\\i_1\in I}}\;\sum_{\substack{\gamma_{i_2}\geq 0,\,i_2\in I^c\\ \sum\limits_{\kappa_2\in I^c}\gamma_{\kappa_2}\geq\left(2n-v-2\sum\limits_{\kappa_1\in I}\gamma_{\kappa_1}-2\sum\limits_{\kappa_2\in I^c}(j_{\kappa_2}+1)+1\right)_+}} b^{-\sum\limits_{\kappa_2\in I^c}\gamma_{\kappa_2}}\\
&\leq b^{-|j|_+}\sum_{I\subsetneq\{1,\ldots,d\}}b^{-\sum\limits_{\kappa_1\in I}(j_{\kappa_1}+1)-\sum\limits_{\kappa_2\in I^c}(j_{\kappa_2}+1)}\ldots\\&\hspace{2cm}\ldots\sum_{\substack{0\leq\gamma_{i_1}\leq j_{i_1}\\i_1\in I}}\;\sum_{r=\left(2n-v-2\sum\limits_{\kappa_1\in I}\gamma_{\kappa_1}-2\sum\limits_{\kappa_2\in I^c}(j_{\kappa_2}+1)+1\right)_+}^\infty b^{-r}(r+1)^{d-1-\#I}
\end{align*}
where we applied Lemma \ref{index_dim_red} and several index shifts. The case $I = \{1,\ldots,d\}$ contradicts the condition $\varrho_2(t)>2n-v$ since $\varrho_2(t) < 2j_1 + \ldots + 2j_d < 2n-2v \leq 2n-v$. We continue the calculation
\begin{align*}
&|\langle D_{\P_n^b},h_{j,m,l}\rangle| \\
& \preceq b^{-|j|_+}\sum_{I\subsetneq\{1,\ldots,d\}}b^{-\sum\limits_{\kappa_1\in I}(j_{\kappa_1}+1)-\sum\limits_{\kappa_2\in I^c}(j_{\kappa_2}+1)}\sum_{\substack{0\leq\gamma_{i_1}\leq j_{i_1}\\i_1\in I}}\;b^{-2n+v+2\sum\limits_{\kappa_1\in I}\gamma_{\kappa_1}+2\sum\limits_{\kappa_2\in I^c}(j_{\kappa_2}+1)}\\&\hspace{3.5cm}\times\left(2n-v-2\sum\limits_{\kappa_1\in I}\gamma_{\kappa_1}-2\sum\limits_{\kappa_2\in I^c}(j_{\kappa_2}+1)+1\right)^{d-1-\#I}\\
&\leq b^{-|j|_+ -2n+v}\sum_{I\subsetneq\{1,\ldots,d\}}b^{-\sum\limits_{\kappa_1\in I}(j_{\kappa_1}+1)+\sum\limits_{\kappa_2\in I^c}(j_{\kappa_2}+1)}\sum_{\substack{0\leq\gamma_{i_1}\leq j_{i_1}\\i_1\in I}}b^{2\sum\limits_{\kappa_1\in I}\gamma_{\kappa_1}}\\&\hspace{4cm}\times\left(2n-v-2\sum\limits_{\kappa_1\in I}\gamma_{\kappa_1}-2\sum\limits_{\kappa_2\in I^c}(j_{\kappa_2}+1)+1\right)^{d-1}\\
&\leq b^{-|j|_+ -2n+v}\sum_{I\subsetneq\{1,\ldots,d\}}b^{\sum\limits_{\kappa_1\in I}(j_{\kappa_1}+1)+\sum\limits_{\kappa_2\in I^c}(j_{\kappa_2}+1)}\\&\hspace{3.5cm}\times\left(2n-v-2\sum\limits_{\kappa_1\in I}(j_{\kappa_1}+1)-2\sum\limits_{\kappa_2\in I^c}(j_{\kappa_2}+1)+1\right)^{d-1}\\
&\preceq b^{-2n+v}\left(2n-v-2|j|_+\right)^{d-1}.
\end{align*}
\end{proof}

We are now ready to prove the theorems.

\begin{proof}[Proof of Theorem \ref{main_thm_L2}]
Let $\P_n^b$ be an order $2$ digital $(v,n,d)$-net over $\F_b$. We apply Theorem \ref{L2Haar} and prove
\begin{align} \label{main_thm_L2_v_dep}
 \sum_{j\in\N_{-1}^d}b^{|j|_+}\sum_{m\in\D_j,\,l\in\B_j}|\langle D_{\P_n^b},h_{j,m,l}\rangle|^2\preceq b^{-2n+v}\,n^{d-1}\,v \preceq b^{-2n}\,n^{d-1}.
\end{align}
We recall that $\#\D_j=b^{|j|_+},\,\#\B_j=b-1$. We split the sum in $j$ into three parts and apply Lemma \ref{prp_ord2} \eqref{prp_ord2_part2} and Lemma \ref{index_dim_red_log} to get
\begin{align*}
 &\sum_{\substack{j\in\N_{-1}^d\\|j|_+< n-\lceil v/2\rceil}}b^{|j|_+}\sum_{m\in\D_j,\,l\in\B_j}|\langle D_{\P_n^b},h_{j,m,l}\rangle|^2\\
 &\preceq  \sum_{\substack{j\in\N_{-1}^d\\|j|_+< n-\lceil v/2\rceil}}b^{|j|_+}\,b^{|j|_+}\,b^{-4n+2v}\left(2n-v-2|j|_+\right)^{2(d-1)}\\
 &\leq b^{-4n+2v}\sum_{\kappa=0}^{n-v/2-1}b^{2\kappa}\left(2n-v-2\kappa\right)^{2(d-1)}(\kappa+1)^{d-1}\\
 &\leq b^{-4n+2v}\,b^{2n-v}\,\left(2n-v-2n+v+2\right)^{2(d-1)}(n-v/2)^{d-1}\\
 &\preceq b^{-2n+v}\,n^{d-1}
\end{align*}
for big intervals. We also consider middle sized and small intervals. In the case of small intervals ($|j|_+\geq n$) there are at most $b^n$ intervals containing a point of $\P_n^b$ while in the case where $n> |j|_+\geq n$ there are even less namely at most $b^{|j|_+}$. We apply Lemma \ref{prp_ord2} \eqref{prp_ord2_part1}
\begin{align*}
 &\sum_{\substack{j\in\N_{-1}^d\\n> |j|_+\geq n-\lceil v/2\rceil}}b^{|j|_+}\sum_{m\in\D_j,\,l\in\B_j}|\langle D_{\P_n^b},h_{j,m,l}\rangle|^2\\
 &\preceq \sum_{\substack{j\in\N_{-1}^d\\n> |j|_+\geq n-\lceil v/2\rceil}}\,b^{|j|_+}\,b^{|j|_+}\,b^{-2|j|_+ -2n+v}\\
 &\leq b^{-2n+v}\sum_{\kappa=n-\lceil v/2\rceil}^{n-1}(\kappa+1)^{d-1}\\
 &\preceq b^{-2n+v}\,n^{d-1}\,v
\end{align*}
for medium sized intervals and
\begin{align*}
 &\sum_{\substack{j\in\N_{-1}^d\\|j|_+\geq n}}b^{|j|_+}\sum_{m\in\D_j,\,l\in\B_j}|\langle D_{\P_n^b},h_{j,m,l}\rangle|^2\\
 &\preceq \sum_{\substack{j\in\N_{-1}^d\\|j|_+\geq n}}\,b^{|j|_+}\,b^n\,b^{-2|j|_+ -2n+v} + \sum_{\substack{j\in\N_{-1}^d\\|j|_+\geq n}}\,b^{|j|_+}\,(b^{|j|_+}-b^n)\,b^{-4|j|_+}\\
 &\leq b^{-n+v}\sum_{\kappa=n}^{\infty}b^{-\kappa}\,(\kappa+1)^{d-1} + \sum_{\kappa=n}^{\infty}b^{-2\kappa}\,(\kappa+1)^{d-1}\\
 &\preceq b^{-2n+v}\,n^{d-1}
\end{align*}
for small intervals.
\end{proof}

\begin{proof}[Proof of Theorem \ref{main_thm_SpqrB_ord1}]
 Let $D_{\P_n^b}$ be an order $1$ digital $(v,n,d)$-net over $\F_b$. We apply Theorem \ref{SpqrBHaar} and prove
\begin{align} \label{main_thm_SpqrB_ord1_v_dep}
 \sum_{j\in\N_{-1}^d} b^{|j|_+(r-1/p+1)q}\left(\sum_{m\in\D_j,\,l\in\B_j}|\langle D_{\P_n^b},h_{j,m,l}\rangle|^p\right)^{q/p}&\preceq b^{n(r-1)q}\,n^{(d-1)}\,b^{vq} \\&\preceq b^{n(r-1)q}\,n^{(d-1)}\notag.
\end{align}
We recall that $\#\D_j=b^{|j|_+},\,\#\B_j=b-1$. We split the sum in $j$ in three parts and apply Minkowski's inequality, Lemma \ref{prp_ord1} \eqref{prp_ord1_part2} and Lemma \ref{index_dim_red_log} to get
\begin{align*}
 &\sum_{\substack{j\in\N_{-1}^d\\|j|_+ < n-v}}b^{|j|_+(r-1/p+1)q}\left(\sum_{m\in\D_j,\,l\in\B_j}|\langle D_{\P_n^b},h_{j,m,l}\rangle|^p\right)^{q/p}\\
 &\preceq  \sum_{\substack{j\in\N_{-1}^d\\|j|_+ < n-v}} b^{|j|_+(r-1/p+1)q}\,b^{|j|_+ q/p}\,b^{(-|j|_+ -n+v)q}\left(n-v-|j|_+\right)^{(d-1)q}\\
 &\leq b^{(-n+v)q}\sum_{\kappa=0}^{n-v-1}b^{\kappa rq}\left(n-v-\kappa\right)^{(d-1)q}(\kappa+1)^{d-1}\\
 &\leq b^{(-n+v)q}\,b^{(n-v)rq}\,(n-v+1)^{d-1}\\
 &\preceq b^{n(r-1)q}\,n^{d-1}\,b^{v(1-r)q}
\end{align*}
for big intervals. Again we differentiate between small intervals and middle sized intervals. We apply Lemma \ref{prp_ord1} \eqref{prp_ord1_part1}
\begin{align*}
 &\sum_{\substack{j\in\N_{-1}^d\\n> |j|_+\geq n-v}}b^{|j|_+(r-1/p+1)q}\left(\sum_{m\in\D_j,\,l\in\B_j}|\langle D_{\P_n^b},h_{j,m,l}\rangle|^p\right)^{q/p}\\
 &\preceq \sum_{\substack{j\in\N_{-1}^d\\n> |j|_+\geq n-v}}b^{|j|_+(r-1/p+1)q}\,b^{|j|_+ q/p}\,b^{(-|j|_+ -n+v)q}\\
 &\leq b^{(-n+v)q}\sum_{\kappa=n-v}^{n-1}b^{\kappa rq}(\kappa+1)^{d-1}\\
 &\preceq b^{(-n+v)q}\,b^{nrq}\,n^{d-1}\\
 &\leq b^{n(r-1)q}\,n^{(d-1)}\,b^{vq}
\end{align*}
for medium sized intervals and considering the range of $r$
\begin{align*}
 &\sum_{\substack{j\in\N_{-1}^d\\|j|_+\geq n}}b^{|j|_+(r-1/p+1)q}\left(\sum_{m\in\D_j,\,l\in\B_j}|\langle D_{\P_n^b},h_{j,m,l}\rangle|^p\right)^{q/p}\\
 &\preceq \sum_{\substack{j\in\N_{-1}^d\\|j|_+\geq n}}\,b^{|j|_+(r-1/p+1)q}\,b^{nq/p}\,b^{(-|j|_+ -n+v)q}\\ &\qquad+ \sum_{\substack{j\in\N_{-1}^d\\|j|_+\geq n}}\,b^{|j|_+(r-1/p+1)q}\,(b^{|j|_+}-b^n)^{q/p}\,b^{-2|j|_+ q}\\
 &\leq b^{nq/p}\,b^{(-n+v)q}\sum_{\kappa=n}^{\infty}b^{\kappa(r-1/p)q}\,(\kappa+1)^{d-1} + \sum_{\kappa=n}^{\infty}b^{\kappa(r-1)q}\,(\kappa+1)^{d-1}\\
 &\preceq b^{nq/p}\,b^{(-n+v)q}\,b^{n(r-1/p)q}n^{d-1} + b^{n(r-1)q}\,n^{d-1}\\
 &\preceq b^{n(r-1)q}\,n^{(d-1)}\,b^{vq}
\end{align*}
for small intervals.
\end{proof}

\begin{proof}[Proof of Theorem \ref{main_thm_SpqrB_ord2}]
 Let $D_{\P_n^b}$ be an order $2$ digital $(v,n,d)$-net over $\F_b$. The proof is similar to the proof of Theorem \ref{main_thm_SpqrB_ord1}. We apply Lemma \ref{prp_ord2} instead of \ref{prp_ord1} to get
\begin{align}
 &\sum_{\substack{j\in\N_{-1}^d\\|j|_+ < n-\lceil v/2\rceil}}b^{|j|_+(r-1/p+1)q}\left(\sum_{m\in\D_j,\,l\in\B_j}|\langle D_{\P_n^b},h_{j,m,l}\rangle|^p\right)^{q/p} \notag\\
 &\preceq  \sum_{\substack{j\in\N_{-1}^d\\|j|_+ < n-\lceil v/2\rceil}} b^{|j|_+(r-1/p+1)q}\,b^{|j|_+ q/p}\,b^{(-2n+v)q}\left(2n-v-2|j|_+\right)^{(d-1)q} \notag\\
 &\leq b^{(-2n+v)q}\sum_{\kappa=0}^{n-v/2-1}b^{\kappa (r+1)q}\left(2n-v-2\kappa\right)^{(d-1)q}(\kappa+1)^{d-1}\notag \\
 &\leq b^{(-2n+v)q}\,b^{(n-v/2)(r+1)q}\,(n-v/2+1)^{d-1} \notag\\
 &\preceq b^{n(r-1)q}\,n^{d-1}\,b^{v/2(1-r)q} \label{main_thm_SpqrB_ord2_v_dep} \\
 &\preceq b^{n(r-1)q}\,n^{d-1} \notag
\end{align}
and analogous results for the other subsums.
\end{proof}

\begin{proof}[Proof of Corollaries \ref{main_cor_SpqrF_ord1} and \ref{main_cor_SpqrF_ord2}]
 The results for the Triebel-Lizorkin spaces follow from the results for the Besov spaces. We apply Lemma \ref{lem_emb_BF}: there is a constant $c > 0$ such that
\[ \left\|D_{\P_n^b}|S_{p,q}^r F\right\| \leq c\, \left\|D_{\P_n^b}|S_{\max(p,q),q}^r B\right\| \]
and Corollary \ref{main_cor_SpqrF_ord1} follows from Theorem \ref{main_thm_SpqrB_ord1} and Corollary \ref{main_cor_SpqrF_ord2} from Theorem \ref{main_thm_SpqrB_ord2}.
\end{proof}

\begin{proof}[Proof of Corollaries \ref{main_cor_SprH_ord1} and \ref{main_cor_SprH_ord2}]
 We recall that $S_p^r H = S_{p,2}^r F$. Therefore Corollary \ref{main_cor_SprH_ord1} follows from Corollary \ref{main_cor_SpqrF_ord1} and Corollary \ref{main_cor_SprH_ord2} from Corollary \ref{main_cor_SpqrF_ord2} in the case $q=2$, respectively.
\end{proof}

\begin{proof}[Proof of Corollary \ref{main_thm_Lp}]
 We recall that $L_p = S_p^0 H$. Therefore the result follows from Corollary \ref{main_cor_SprH_ord2} in the case $r=0$.
\end{proof}

\addcontentsline{toc}{chapter}{References}

\end{document}